\date{}
\begin{document}

\newcommand{\out}{\ensuremath{\mathrm{Out}(\mathbb{F}) } }
\newcommand{\aut}{\ensuremath{\mathrm{Aut}(\mathbb{F}) }}
\newcommand{\F}{\ensuremath{\mathbb{F} } }
\newcommand\addtag{\refstepcounter{equation}\tag{\theequation}}
\newtheorem{theorem}{Theorem}[section]
\newtheorem{corollary}[theorem]{Corollary}
\newtheorem{lemma}[theorem]{Lemma}
\newtheorem{proposition}[theorem]{Proposition}
\newtheorem{remark}[theorem]{Remark}
\newtheorem{definition}[theorem]{Definition}
\newtheorem*{thma}{Theorem A}

\title{Relatively irreducible free subgroups in $\out$}



\author{Pritam Ghosh}



\maketitle

\begin{abstract}
We construct examples of free-by-free extensions which are (strongly) relatively hyperbolic. For this 
we consider certain class of exponentially growing outer automorphisms, which are not fully irreducible themselves but behave like 
fully irreducibles in the complement of a free factor system $\mathcal{F}$. We construct a free group using two \emph{independent} elements 
of this type and prove that this gives us a free-by-free relatively hyperbolic extension. This generalizes similar results obtained in the case of 
surface group with punctures.

  \vspace{0.5cm}
\textbf{Keywords:} Free groups - outer automorphisms - train track

\textbf{AMS subject classification:} 20F65 ,  57M07

\end{abstract}

\section{Introduction}
\label{intro}

Let $\F$ be a free group of rank $N\geq 3$. The quotient group 
$\aut/\mathrm{Inn}(\F)$, denoted by $\out$,  is called the group  of outer automorphisms of $\F$. There are many tools in studying the properties of this group. One of 
them is by using train-track maps introduced Bestvina-Handel \cite{BH-92} and later generalized by Bestvina-Feighn-Handel \cite{BFH-97}, \cite{BFH-00} and Feighn-Handel \cite{FH-11}. 
$\out$ admits an action on the set of conjugacy classes of free factors of $\F$. An element $\phi$ is said to be \emph{fully irreducible} if 
there are no $\phi$-periodic conjugacy class of any free factor.
The fully-irreducible outer automorphisms are the most well understood elements in $\out$ . 
They behave very closely to the pseudo-Anosov homeomorphisms of surfaces with one boundary component, 
which have been well understood and are a rich source of examples and interesting theorems. We however, 
will focus on exponentially growing outer automorphisms which might not be fully irreducible but exhibit some properties similar to fully-irreducible elements.

This work is an extension of a construction in \cite{self1}. In that paper the author constructs free subgroups in $\out$ 
where they start with exponentially growing elements $\phi, \psi \in \out$ (not necessarily fully-irreducible) and find sufficient conditions 
so that the elements of the free group, not powers of or conjugate to some power of $\phi, \psi$, are hyperbolic and fully-irreducible 
(recall that it was shown by Bestvina-Feighn\cite{BF-92} and Brinkmann \cite{Br-00} that an outer automorphism was \textit{hyperbolic} if and only if it did not have any periodic conjugacy classes). 
One of the key assumptions in proving this was that the elements $\phi, \psi$ did not 
have a common periodic free factor system or conjugacy class. In this paper we deal with the case when $\phi, \psi$ do have a nontrivial common invariant free factor system.

Given a collection of free factors $F^1, F^2,...., F^k$ of $\F$, such that $\F= F^1\ast F^2\ast ... \ast F^k \ast B$ with $B$ possibly trivial, 
we say that the collection forms a \emph{free factor system}, written as  $\mathcal{F}:=\{[F^1], [F^2],...., [F^k]\}$ and we say that 
a conjugacy class $[c]$ of a word $c\in\F$ is carried by $\mathcal{F}$ if there exists some $1\leq i \leq k$ and a representative $H^i$ of $F^i$ 
such that $c\in H^i$.

The notion of $\phi$ being fully irreducible relative to a free factor system $\mathcal{F}$ intuitively can be thought of as $\phi$
being fully irreducible in the ``complement'' of $\mathcal{F}$ (see beginning of Section \ref{3} for the definition). 
Such examples exist in abundance, especially when rank($\F$) is high, and are very easy to construct 
by ``gluing'' a fully irreducible outer automorphism of some free factor $K$ of $\F$ together with another outer automorphism 
defined on the complementary free factor $K'$ of $\F$, where $K\ast K' =\F$. The resulting automorphism will be fully irreducible relative to the free factor system 
$\{[K']\}$.

We now state our main construction in this paper:
\begin{thma}
 
Given a free factor system $\mathcal{F}$ with co-edge number $\geq 2$, given $\phi, \psi \in \out$ each preserving $\mathcal{F}$, and given invariant lamination pairs 
  $\Lambda^\pm_\phi, \Lambda^\pm_\psi$, so that the pair $(\phi, \Lambda^\pm_\phi), (\psi, \Lambda^\pm_\psi)$ is independent relative to $\mathcal{F}$, then there $\exists$  $M\geq 1$, 
  such that for any integer $m,n \geq M$, the group $\langle \phi^m, \psi^n \rangle$ is a free group of rank 2, all of whose non-trivial elements except perhaps the powers of $\phi, \psi$ 
  and their conjugates, are fully irreducible relative to $\mathcal{F}$ with a lamination pair which fills relative to $\mathcal{F}$. 
  
  In addition if both $\Lambda^\pm_\phi, \Lambda^\pm_\psi$ are non-geometric 
  then this lamination pair is also non-geometric.
\end{thma}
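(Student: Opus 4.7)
The plan is to execute a relativized Tits-style ping-pong argument, with the four laminations $\Lambda^\pm_\phi, \Lambda^\pm_\psi$ playing the role of attracting/repelling regions in the space of lines of $\F$. The hypothesis that the pairs $(\phi,\Lambda^\pm_\phi), (\psi,\Lambda^\pm_\psi)$ are independent relative to $\mathcal{F}$ should provide exactly the generic-position input needed: the four laminations are pairwise distinct, no one is weakly attracted to another under the ``wrong'' automorphism, and no proper free factor system strictly containing $\mathcal{F}$ carries any of them.

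First, using a version of the Bestvina--Feighn--Handel weak attraction theorem relative to $\mathcal{F}$, I would construct open neighborhoods of each of the four laminations in the space of lines not carried by $\mathcal{F}$. Independence makes the four neighborhoods disjointly choosable. For $M$ sufficiently large and any $m \geq M$, the iterate $\phi^m$ carries the complement of the neighborhood of $\Lambda^-_\phi$ into the neighborhood of $\Lambda^+_\phi$, and similarly $\psi^n$ with its pair. Classical ping-pong then yields that $\langle \phi^m, \psi^n \rangle$ is free of rank two for all $m,n \geq M$.

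Second, for any reduced word $w$ in this free group that is not conjugate to a power of $\phi$ or $\psi$, I would extract an attracting/repelling lamination pair $\Lambda^\pm_w$ witnessing full irreducibility of $w$ relative to $\mathcal{F}$. Writing $w$ in alternating syllables $\phi^{m_1}\psi^{n_1}\cdots\phi^{m_k}\psi^{n_k}$, iterating $w$ on a generic line lying outside the repelling neighborhoods produces a subsequential weak limit that contains $\Lambda^+_\phi$ or $\Lambda^+_\psi$ as a sublamination and is $w$-invariant; this limit is identified as $\Lambda^+_w$, and applying the same argument to $w^{-1}$ yields $\Lambda^-_w$. The independence hypothesis forces $\Lambda^\pm_w$ not to be carried by any proper free factor system strictly containing $\mathcal{F}$, so the pair fills relative to $\mathcal{F}$. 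Filling together with $w$-invariance of $\Lambda^\pm_w$ rules out any $w$-periodic intermediate free factor system, so $w$ is fully irreducible relative to $\mathcal{F}$. For the non-geometric addendum, a geometric lamination pair relative to $\mathcal{F}$ is rigidly associated to a surface with prescribed peripheral structure, and the mixed lamination $\Lambda^+_w$, built by ping-pong from non-geometric ingredients on two sides, cannot be so associated.

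The main technical obstacle will be making Step 2 rigorous. Showing that a weak limit of iterates of a line under $w$ is a bona fide attracting lamination for $w$ relative to $\mathcal{F}$ (and not merely some weak-limit of lines) requires that the relative weak attraction theorem be available in a form sensitive enough to detect filling relative to $\mathcal{F}$, and it requires delicate control of the combinatorics of relative CT representatives of $\phi$ and $\psi$ together with estimates on how the $\phi^m$-- and $\psi^n$--iterates compose along the train-track strata above $\mathcal{F}$. Verifying these interactions, and ruling out the possibility that $\Lambda^\pm_w$ degenerates into a sublamination supported on $\mathcal{F}$, is where the bulk of the technical work must go; I would expect the paper's preparatory sections to be devoted precisely to setting up this machinery.
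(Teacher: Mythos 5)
Your outline shares the same high-level skeleton as the paper (ping-pong to show freeness, extract a lamination pair for each reduced word, deduce full irreducibility relative to $\mathcal{F}$), and you rightly flag Step~2 as the main obstacle, but there are two concrete problems in the way you propose to execute it.

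First, the mechanism you describe for producing $\Lambda^\pm_w$ is not the one the paper uses, and as stated it is unlikely to work. You propose to iterate $w$ on a generic line and take a subsequential weak limit, asserting that this limit contains $\Lambda^+_\phi$ or $\Lambda^+_\psi$ as a sublamination. In fact the attracting lamination of $w=\xi$ is a genuinely new lamination, not an enlargement of $\Lambda^+_\phi$ or $\Lambda^+_\psi$, and weak limits of iterates need not converge to a single attracting lamination in any controlled way. The paper's Proposition~\ref{pingpong} instead propagates a fixed finite path $\beta^{\epsilon_1}_0$ through the alternating syllables using the $f_{\#\#}$ operation and bounded cancellation, culminating in Lemma~\ref{expgrowth}: the composed map satisfies $(f_\xi)_{\#\#}(\beta)$ contains three disjoint copies of $\beta$, which by that lemma produces a bona fide attracting lamination $\Lambda^+_\xi$ with a concrete attracting neighborhood $N(G,\beta)$. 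Critically, the ping-pong lemma also nails down the nonattracting subgroup system: $\mathcal{F} \sqsubset \mathcal{A}_{na}(\Lambda^\pm_\xi) \sqsubset \mathcal{A}_{na}(\Lambda^\pm_\phi), \mathcal{A}_{na}(\Lambda^\pm_\psi)$, which combined with the mutual-malnormality-rel-$\mathcal{F}$ hypothesis (condition~(5) of Definition~\ref{relind}) forces $\mathcal{A}_{na}(\Lambda^\pm_\xi)=\mathcal{F}$. Your proposal does not address this equality at all, and by Lemma~\ref{firelf} it is one of the two conditions (alongside filling) needed to conclude full irreducibility rel $\mathcal{F}$; filling alone does not suffice.

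Second, and more seriously, you assert that ``the independence hypothesis forces $\Lambda^\pm_w$ not to be carried by any proper free factor system strictly containing $\mathcal{F}$,'' and treat this as nearly automatic. This is exactly the hard step, and it does not follow by a soft argument from independence; the independence hypothesis only asserts that the \emph{union} $\{\Lambda^\pm_\phi\}\cup\{\Lambda^\pm_\psi\}$ fills relative to $\mathcal{F}$, not that the single new pair $\Lambda^\pm_\xi$ does. The paper proves filling by contradiction using a Stallings-graph argument: assuming a sequence of words $\xi_M$ (exponents $\to\infty$) whose lamination pairs fail to fill, one studies the Stallings graphs $K_M$ of the supporting free factors $F_M=\mathcal{F}_{supp}(\mathcal{F},\Lambda^\pm_{\xi_M})$, shows via a uniform bound on the ``core'' $Y_{M,C}$ and the uniformity of attracting neighborhoods (conclusions (7)--(8) of Proposition~\ref{pingpong}) that for large $M$ both a generic leaf of $\Lambda^+_\psi$ and a generic leaf of $\Lambda^-_\phi$ lift into $Y_{M,C}$, and deduces that $\mathcal{F}_{supp}(\mathcal{F}_\phi,\mathcal{F}_\psi)\sqsubset\mathcal{F}_M$ is proper --- contradicting condition~(2) of independence. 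This Stallings-graph machinery (together with the sublemma on common weak limits of $\gamma^+_{M_i}$ and $\gamma^-_{M_i}$) is precisely what replaces the leap you made, and no version of the weak attraction theorem directly yields it. Without it, your Step~2 has a genuine gap.
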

  
The assumption of ``co-edge number $\geq 2$'' is a technical condition helps us get rid of some pathological cases that may arise. To understand such
situations, recall that from the work of 
Feighn-Handel in their ``Recognition theorem'' work \cite{FH-11}, we may pass to a rotationless power of $\phi$ and choose a completely split 
train track map (\emph{CT}) $f: G\to G$, that has a filtration element $G_{r-1}$ which realizes $\mathcal{F}$. The ``co-edge number $\geq 2$'' ensures 
that the strata $H_r$ is an exponentially growing strata and not a polynomially growing one and hence guarantees the existence of an attracting lamination 
associated to $H_r$. Roughly speaking, this co-edge assumption ensures that conjugacy classes that are not carried by $\mathcal{F}$ grow exponentially under iteration of 
$\phi$ (see Lemma \ref{firelf}).

The notion of independence of the pseudo-Anosov elements in $\mathcal{MCG}(\mathcal{S})$ is equivalent to the property that the attracting and repelling laminations of the two elements are 
mutually transverse on the surface, 
from which it follows that the collection of laminations fills (in fact they individually fill). 
These filling properties are enjoyed by fully irreducible elements in $\out$. 
But exponentially growing elements which are not fully irreducible might not have this property. In the definition of \textquotedblleft pairwise independence rel $\mathcal{F}$ \textquotedblright \ref{relind} 
we extract a list the properties similar to pseudo-Anosov maps that make the aforementioned theorem work.

\textbf{Idea of Proof : } 
The tools we use to prove our theorem are the theory of relative train track maps and the weak attraction property, which were developed in a series of works in 
\cite{BH-92}, \cite{BFH-97}, \cite{BFH-00}, \cite{FH-11}, \cite{HM-09} and \cite{HM-13c}. Given exponentially growing elements of $\out$, 
which are pairwise independent relative (see \ref{relind}) to a free factor system $\mathcal{F}$, 
we generalize a pingpong type argument developed by Handel-Mosher in \cite{HM-13c} to produce exponentially growing elements. This part of the proof 
uses the mutual attraction property in the pairwise independent hypothesis to bounce legal long segments of generic leaves of the given attracting 
laminations and make them grow exponentially.
Then we proceed to show that these elements will be fully irreducible relative to $\mathcal{F}$ by using Stallings graphs, which again was 
originally developed in \cite{HM-13c}.

The aforementioned theorem was originally suggested by Lee Mosher to the author and written in relation to the 
second-bounded cohomology alternative paper \cite{HM-15} by Handel and Mosher. They show that every subgroup of $\out$ is either virtually abelian or has uncountably infinite second bounded cohomological dimension.
They use our main theorem and it's corollary \ref{rfi} as a special type of relatively irreducible free subgroups
and develop a method to reduce their general case to the special case.

In the later half of the paper, Section \ref{section4}, we proceed to show that under some natural conditions
 subgroups constructed above will yield strongly relative hyperbolic extensions and this shows the rich geometric properties that relatively fully irreducible 
outer automorphisms have. The first step of the proof is to show that outer automorphisms which are fully irreducible relative to a free factor system $\mathcal{F}$ give 
(strongly) relatively hyperbolic extension groups in the form of the following theorem:

\newtheorem*{thmb}{Theorem \ref{relhyp2}}
 \begin{thmb}
  Let $\phi\in\out$  be rotationless and $\mathcal{F}=\{[F^1], [F^2],..., [F^k]\}$ be a $\phi-$invariant free factor system such that 
  $\mathcal{F}\sqsubset\{[\F]\}$  
  is a multi-edge extension and $\phi$ is fully irreducible relative to $\mathcal{F}$ and nongeometric above $\mathcal{F}$. 
  Then the extension group $\Gamma$ in the short exact sequence
  $$1 \to \F \to \Gamma \to \langle \phi \rangle \to 1$$ is strongly hyperbolic relative
  to the collection of subgroups $\{F^i{\rtimes_{\Phi_i}} \mathbb{Z}\}$, where $\Phi_i$ is a 
  chosen lift of $\phi$ such that $\Phi_i(F^i)=F^i$.
 \end{thmb}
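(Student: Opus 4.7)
The strategy is to realize $\Gamma$ as the mapping torus of $\phi$ acting on $\F$ and to apply a combination theorem for strong relative hyperbolicity, in the style of Bestvina-Feighn as generalized to the relatively hyperbolic setting by Mj-Reeves. The base group $\F$ is strongly hyperbolic relative to the conjugacy classes $\{[F^1],\ldots,[F^k]\}$ of the components of $\mathcal{F}$, by the standard fact that a free group is hyperbolic relative to any free factor system. Because $\phi$ is rotationless and preserves $\mathcal{F}$ componentwise, after choosing the lifts $\Phi_i$ with $\Phi_i(F^i)=F^i$ it preserves the peripheral structure in the strictest possible sense, and the mapping tori $F^i \rtimes_{\Phi_i} \mathbb{Z}$ are the natural candidates for the peripheral subgroups of a relatively hyperbolic structure on $\Gamma$.

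The first step of the proof is to pass to a CT representative $f\colon G\to G$ of $\phi$, as supplied by the Feighn-Handel recognition theorem, in which $\mathcal{F}$ is realized by a core filtration element $G_{r-1}$. The multi-edge extension hypothesis $\mathcal{F}\sqsubset\{[\F]\}$ guarantees that the stratum $H_r$ above $G_{r-1}$ has at least two edges; combined with $\phi$ being fully irreducible relative to $\mathcal{F}$, this forces $H_r$ to be the top stratum (so $G_r=G$) and to be exponentially growing, with an associated attracting lamination $\Lambda^+$. The nongeometric hypothesis above $\mathcal{F}$ rules out the surface-like geometric model that would otherwise obstruct hyperbolicity on the relative level.

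The key dynamical input is the weak attraction theorem of Handel-Mosher, together with Lemma \ref{firelf}: every conjugacy class $[c]$ in $\F$ not carried by $\mathcal{F}$ is weakly attracted to $\Lambda^+$ under forward iteration of $\phi$ and to the dual lamination $\Lambda^-$ under backward iteration, and correspondingly its length grows exponentially. The quantitative refinement I would establish is that the length of $[c]$ measured after electrifying the cosets of the $F^i$ (its ``relative length'') also grows exponentially, uniformly over all non-peripheral conjugacy classes. This is precisely the ingredient needed to feed into the combination theorem.

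Finally, I would present $\Gamma$ as a one-vertex, one-edge graph of groups with vertex and edge group both $\F$, peripheral structure $\mathcal{F}$, and edge-to-vertex inclusions given by the identity and by $\phi$; these inclusions are strictly type-preserving relative quasi-isometries by the $\phi$-invariance of $\mathcal{F}$. The hypotheses of the Mj-Reeves combination theorem then reduce to verifying the cone-bounded hallways flare condition for the associated horizontal tree of relatively hyperbolic spaces, which is exactly the uniform exponential growth of relative length established above. The main obstacle is to make this flaring quantitative and uniform: while weak attraction is in the first instance a qualitative statement about eventual neighborhoods of $\Lambda^+$, the combination theorem demands a definite multiplicative flare rate valid for all sufficiently long hallways simultaneously, and one must extract from the CT structure of $f$ an explicit exponential bound that survives both the coning off of peripheral cosets and the bookkeeping of forward and backward iteration along the mapping torus direction. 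Once flaring is in place, Mj-Reeves yields the desired strong relative hyperbolicity of $\Gamma$ with peripheral subgroups $\{F^i \rtimes_{\Phi_i}\mathbb{Z}\}$.
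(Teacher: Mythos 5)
Your strategy matches the paper's at the architectural level: realize $\Gamma$ as a mapping torus, electrocute the components of $\mathcal{F}$, and feed a flaring estimate into the Mj-Reeves combination theorem, with Lemma \ref{firelf} supplying $\mathcal{A}_{na}(\Lambda^\pm_\phi)=\mathcal{F}$ as the malnormal quasiconvex peripheral structure on $\F$. You also correctly name the crux --- converting weak attraction into a uniform, quantitative flare rate in the coned-off metric --- but you stop at naming the obstacle rather than overcoming it, and that is precisely where the content of the theorem lies.

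Concretely, the sentence ``the quantitative refinement I would establish is that relative length grows exponentially, uniformly over non-peripheral conjugacy classes'' compresses a chain of results each of which requires its own argument. First, one needs a notion of $H_r$-\emph{legality} of circuits (Definition \ref{leg}, adapted from Bestvina-Feighn-Handel) so that exponential growth can be localized to legal leaf segments of length exceeding the critical constant; and one then needs a \emph{uniform} exponent $M_2$ beyond which every non-peripheral conjugacy class, iterated forward or backward, has legality $\geq \epsilon$. This is Lemma \ref{legality}, and it does not follow directly from the weak attraction theorem --- it requires a compactness/weak-limit argument against it. Second, the flaring estimate Lemma \ref{flare} lives in the $H_r$-edge metric while Mj-Reeves demands flaring in the electrocuted metric on $\widehat{\F}$, so a comparison lemma (Lemma \ref{comparison}) is needed relating $|\alpha|_{H_r}$ to $||\alpha||_{el}$, which in turn needs a careful look at how cosets of the $F^i$ sit in the universal cover of $G$. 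Third, the hallways flare condition in Mj-Reeves is a statement about geodesic \emph{words} in the coned-off Cayley graph, not conjugacy classes; after proving conjugacy flaring (Lemma \ref{conjflare}) one still must promote it to word flaring (Proposition \ref{strictflare}), treating separately the cases where $w$ is not cyclically reduced and where $w$ is conjugate into some $F^i$. Your proposal does not distinguish these two flavors of flaring, and the passage between them is nontrivial. Without these three steps the argument is a correct blueprint but not a proof. (A minor slip: nongeometric full irreducibility above $\mathcal{F}$ makes $H_r$ the top noncontractible stratum, but zero strata can sit above it, so $G_r=G$ need not hold.)
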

 
Intuitively, this theorem is a consequence of the observation that we made earlier, namely every conjugacy class not carried by $\mathcal{F}$ grows 
exponentially under iteration of $\phi$. Also notice that if we take $\mathcal{F}=\emptyset$, this theorem gives another proof of 
the result of Brinkmann and Bestvina-Feighn which shows that the extension group 
$\Gamma$ is hyperbolic if $\phi$ is does not have periodic conjugacy classes.

\textbf{Idea of proof : } Using Proposition \cite[Proposition 2.2]{HM-13d} (stated here as Lemma \ref{firelf}) we can conclude that there 
exists a dual lamination pair $\Lambda^\pm_\phi$ such that the nonattracting subgroup system $\mathcal{A}_{na}(\Lambda^\pm_\phi)=\mathcal{F}$. 
Recall that a conjugacy class is not weakly attracted to $\Lambda^+_\phi$ if and only if it is carried by the nonattracting subgroup system. 
Hence we can safely conclude that every conjugacy class not carried by $\mathcal{F}$ is exponentially growing. Next we generalize the notion of 
\emph{legality} of circuits (see definition \ref{leg}) which first appeared in \cite{BFH-97} and use the weak attraction theorem to show that every 
conjugacy class not carried by $\mathcal{F}$, when iterated by either $\phi$ or $\phi^{-1}$ at most $M_2$ times (Lemma \ref{legality}), 
gains sufficiently long legal segments. This result is then used to prove ``conjugacy flaring'' in Lemma \ref{conjflare} and ``strictly flaring'' 
condition in Proposition \ref{strictflare}. The technique of proof in both these flaring results is a generalization of the technique used in 
\cite{BFH-97}. The last part of the proof is to verify that we satisfy all the conditions of the Mj-Reeves strong combination theorem
for relatively hyperbolic groups \cite[Theorem 4.6]{MjR-08} and we conclude relative hyperbolicity using their combination theorem.

The key idea that allows us to use the Mj-Reeves combination theorem is that the nonattracting subgroup system  
 $\mathcal{A}_{na}(\Lambda^\pm_\phi)= \{[F^1], [F^2],...., [F^k]\}$ is a \emph{malnormal} subgroup system (i.e. $H^i\cap H^j=\emptyset$ for any 
 representatives of $F^i, F^j$ where $i\neq j$). This result is due to Handel and Mosher \cite{HM-13c} (see \ref{NAS}) and allows us to 
 choose a collection of representatives $\{F^i\}$ and perform an electrocution of $\F$ with respect to this collection and the consequence is that 
 $\F$ is (strongly) hyperbolic relative to the collection of subgroups $\{F^i\}$ (since free factors are quasiconvex). This follows from 
 a standard fact in the theory of relatively hyperbolic groups which states that any hyperbolic group is strongly hyperbolic 
 relative to a malnormal collection of quasiconvex subgroups.
 
 It is worth pointing out, that this is the first such use of the nonattracting subgroup system in the available literature on this topic and 
 establishes the connection between the peripheral subgroups used for showing relative hyperbolicity and the nonattracting subgroup system.
 We sincerely believe that this connection can be used for proving further interesting results in the area.

Notice that although the hypothesis here includes the ``nongeometric'' assumption, we have also 
proven the case for geometric extensions in \ref{geoext} and generalizes a well known result for surface group with punctures, which shows 
that the mapping tori of a pseudo-Anosov mapping class on a surface with punctures is strongly hyperbolic relative to the collection of 
cusp groups.

Then we proceed to prove the main theorem of this work,  where we produce free-by-free (strongly) relatively hyperbolic groups by 
using the above theorem. The method of proof here is to again apply 
the Mj-Reeves strong combination theorem for relatively hyperbolic groups. This is done in Proposition \ref{34} by proving 
a version of the 3-of-4 stretch lemma due to Lee Mosher and we use it to show that all conditions of the Mj-Reeves combination theorem is 
satisfied and hence we have:

\newtheorem*{thmc}{Theorem \ref{nongeoext}}
 \begin{thmc}
  Suppose $\phi,\psi\in\out$ are rotationless and 
  $\mathcal{F}=\{[F^1], [F^2],..., [F^k]\}$ be a $\phi, \psi-$invariant free factor system such that 
  $\mathcal{F}\sqsubset\{[\F]\}$  
  is a multi-edge extension and $\phi, \psi$ are fully irreducible relative to $\mathcal{F}$, pairwise 
  independent relative to $\mathcal{F}$
  and both are nongeometric above $\mathcal{F}$.
  If $Q=\langle \phi^m, \psi^n \rangle$ denotes 
  the free group in the conclusion of corollary \ref{rfi}, then
  the extension group $\Gamma$ in the short exact sequence 
  $$ 1 \to \F \to \Gamma \to Q \to 1 $$ is strongly relatively hyperbolic with respect to the 
  collection of subgroups 
  $\{F^i \rtimes \widehat{Q_i} \}$, where  $\widehat{Q_i}$  is a lift that preserves $F^i$
  
 \end{thmc}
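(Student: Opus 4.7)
The strategy is to realise $\Gamma$ as the fundamental group of a graph of relatively hyperbolic groups and then invoke the Mj--Reeves strong combination theorem \cite[Theorem 4.6]{MjR-08}. Since $Q\cong F_2$ acts freely on its Bass--Serre tree, pulling this action back through $\Gamma\to Q$ decomposes $\Gamma$ as a graph of groups whose vertex groups are conjugate to the cyclic mapping tori $\Gamma_\phi := \F\rtimes\langle\phi^m\rangle$ and $\Gamma_\psi := \F\rtimes\langle\psi^n\rangle$, and whose edge groups are conjugate to $\F$. Passing to powers preserves the hypotheses of Theorem \ref{relhyp2} (rotationlessness, full irreducibility rel $\mathcal{F}$, nongeometricity above $\mathcal{F}$, and the multi-edge extension condition), so both vertex groups are strongly hyperbolic relative to $\{F^i\rtimes_{\phi^m}\mathbb{Z}\}$ and $\{F^i\rtimes_{\psi^n}\mathbb{Z}\}$ respectively. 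The edge group $\F$ is itself strongly hyperbolic relative to $\{F^1,\dots,F^k\}$: malnormality of this family comes from the nonattracting subgroup system result \ref{NAS} of \cite{HM-13c}, quasiconvexity is standard for free factors, and this combined with hyperbolicity of $\F$ yields relative hyperbolicity. The inclusions of edge groups into vertex groups send each $F^i$ into the peripheral $F^i\rtimes\mathbb{Z}$, so they are strictly type-preserving.

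\textbf{The central step} is to verify the hallways flare (qi-preservation) condition for the Bass--Serre tree of $Q$, which is precisely the 3-of-4 stretch Proposition \ref{34}. The idea is to track a conjugacy class $[c]\subset\F$ not carried by $\mathcal{F}$ along a reduced word $w = a_1 a_2\cdots a_\ell$ in the letters $\phi^{\pm m},\psi^{\pm n}$, and show that at most one out of every four consecutive letters can fail to contribute a uniform multiplicative increase to the length of a representative circuit. When the active letter does not reverse, the flaring machinery from the proof of Theorem \ref{relhyp2} (Lemma \ref{legality}, conjugacy flaring Lemma \ref{conjflare}, strict flaring Proposition \ref{strictflare}) supplies long legal segments and exponential growth. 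At a transition from $\phi^{\pm m}$ to $\psi^{\pm n}$, the pairwise independence rel $\mathcal{F}$ hypothesis (Definition \ref{relind}) guarantees that the class so far built up is not carried by the nonattracting subgroup system of the incoming lamination, so Lemma \ref{firelf} forces it to be weakly attracted to $\Lambda^+_\psi$ (or the appropriate lamination) and the flaring restarts rather than being undone.

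\textbf{The main obstacle} will be establishing the 3-of-4 stretch lemma with uniform constants independent of the word $w$: one must rule out both that cancellation at a transition letter drops the circuit back below the legality threshold required by \ref{legality}, and that stretching from one letter can be killed by the next. Both are handled by enlarging the threshold $M$ and exploiting the full force of pairwise independence rel $\mathcal{F}$, which provides four mutually attracting/repelling laminations $\Lambda^\pm_\phi,\Lambda^\pm_\psi$ none of which is carried by the nonattracting subgroup system of another. Once 3-of-4 stretch is in hand, the remaining Mj--Reeves hypotheses (qi-embedding of edge groups, finite height of the peripherals) are routine for this graph of groups, and the combination theorem delivers strong relative hyperbolicity of $\Gamma$ with exactly the claimed peripheral collection $\{F^i\rtimes\widehat{Q_i}\}$, where $\widehat{Q_i}$ is the $Q$-stabiliser of the $\Gamma$-conjugacy class of $F^i$.
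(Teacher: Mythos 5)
Your high-level route — verify a flaring condition coming from the $3$-of-$4$ stretch Proposition \ref{34} and then invoke the Mj--Reeves strong combination theorem (packaged in Lemma \ref{hallway}) — is exactly the paper's argument. But the graph-of-groups decomposition you set up at the outset is both internally inconsistent and, under the reading that makes it consistent, unsuitable for the combination theorem as formulated.

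You write that ``$Q\cong F_2$ acts freely on its Bass--Serre tree'' and that pulling this back gives vertex groups conjugate to the mapping tori $\F\rtimes\langle\phi^m\rangle$ and $\F\rtimes\langle\psi^n\rangle$ with edge groups conjugate to $\F$. These two statements cannot hold simultaneously. If $Q$ acts freely (e.g.\ on its Cayley graph), then all vertex and edge stabilizers in $Q$ are trivial, so the induced $\Gamma$-stabilizers are all equal to $\ker(\Gamma\to Q)=\F$; the vertex groups are $\F$, not the cyclic mapping tori. To get mapping tori as vertex groups you must instead use the Bass--Serre tree of the free product splitting $Q=\langle\phi^m\rangle*\langle\psi^n\rangle$, on which $Q$ does \emph{not} act freely. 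But then the edge-to-vertex inclusion $\F\hookrightarrow \F\rtimes_{\phi^m}\mathbb{Z}$ is exponentially distorted precisely because $\phi^m$ grows exponentially — the very flaring you want — and so it is not a quasi-isometric embedding (nor does the coned-off inclusion $\widehat{\F}\hookrightarrow \widehat{\F\rtimes\mathbb{Z}}$ become one), violating the q.i.-embedded edge hypothesis of the Bestvina--Feighn/Mj--Reeves machinery. This is not a routine condition to wave away: it is the reason Theorem \ref{relhyp2} cannot be fed in as a black box for the vertex spaces in the way you propose.

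The paper avoids this by taking $T$ to be the Cayley graph of $Q$, so that every vertex space and every edge space is a coset of $\F$ (coned off along the $F^i$), and the edge-to-vertex maps are the identity, hence quasi-isometries. The vertex spaces are already strongly relatively hyperbolic because the nonattracting subgroup system $\mathcal{F}=\mathcal{A}_{na}(\Lambda^\pm)$ is a malnormal family of quasiconvex (free factor) subgroups of the hyperbolic group $\F$; Theorem \ref{relhyp2} is not invoked for the vertex spaces at all. The entire burden is then the hallways flare and cone-bounded strictly flare conditions over this tree, which is exactly what Proposition \ref{34} supplies, after which Lemma \ref{hallway} yields the conclusion with peripherals $\{F^i\rtimes\widehat{Q_i}\}$. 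Once you replace your decomposition with the Cayley-graph one, your central and concluding paragraphs line up with the paper's proof; as written, though, the setup is a genuine gap.
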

 The geometric version of this theorem follows in \ref{geoext}. In the setting of surface group 
 with punctures, the result was proven by Mj-Reeves in \cite[Theorem 4.9]{MjR-08}.

As far as the knowledge of the author goes, this is the 
 first example of free-by-free (strongly) relatively hyperbolic groups in the study of $\out$, which lies outside the scope of mapping class groups 
 of surfaces. Examples of free-by-free hyperbolic groups have been constructed on multiple occasions by 
 Bestvina-Feighn-Handel \cite{BFH-97}, Kapovich-Lustig \cite{KL-10} and in both these constructions the elements of the quotient group 
 are fully irreducible and hyperbolic. The constructions of Bestvina-Feighn-Handel and Kapovich-Lustig follow as a corollary of the above theorem by taking 
 $\mathcal{F}=\emptyset$. 
 Dowdall-Taylor \cite{DT-18} constructed examples of free-by-(convex cocompact) examples of hyperbolic groups and their constructions are so far the most general 
 in the cases where the conditions imply that every element of the quotient group is either fully irreducible and hyperbolic or has finite order.
 A different class of examples of free-by-free hyperbolic groups which do not assume that every element of the 
 quotient group is fully irreducible has been due to Uyanik \cite{Uya-17} and by the author's follow up work \cite{Gh-18} which uses techniques developed 
 in this paper.

\section{Preliminaries}

\subsection{Marked graphs, circuits and path: } A \textit{marked graph} is a graph $G$ which has no valence 1 vertices and equipped with 
 a homotopy equivalence to the rose $m: G\to R_n$ (where $n = \text{rank}(\F)$). The fundamental group of $G$ therefore can be identified with 
 $\F$ up to inner automorphism. A \textit{circuit} in a marked graph is an immersion (i.e. locally injective and continuous map)
 of $S^1$ into $G$. The set of circuits in $G$ can be identified 
 with the set of conjugacy classes in $\F$. Similarly a path is an immersion of the interval $[0, 1]$ into $G$. Given any continuous map 
 from $S^1$ or $[0,1]$ it can be \textit{tightened} to a circuit or path, meaning the original map is freely homotopic to a locally injective and continuous 
 map from the respective domains. In fact, given any homotopically nontrivial and continuous map from $S^1$ to $G$, it can be tightened to a 
  unique circuit. We shall not distinguish between circuits or paths that differ by a homeomorphism of their respective domains.

\subsection{EG strata, NEG strata and Zero strata:}
  
 A \textit{filtration} of a marked graph $G$ is a strictly increasing sequence of subgraphs 
 $G_0 \subset G_1 \subset \cdots \subset G_k = G$, each with no isolated vertices. 
 The individual terms $G_k$ are called \textit{filtration elements}, and if $G_k$ is a core graph (i.e. a graph without valence 1 vertices)
 then it is called a 
 \textit{core filtration element}. The subgraph $H_k = G_k \setminus G_{k-1}$ together with the vertices which occur as endpoints of edges in 
 $H_k$ is called the \textit{stratum of height $k$}.
 The \textit{height}\index{height} of subset of $G$ is the minimum $k$ such that the subset is contained in $G_k$. 
 The height of a map to $G$ is the height of the image of the map. 
 A \textit{connecting path} of a stratum $H_k$ is a nontrivial finite path $\gamma$ of height $< k$ whose endpoints are contained in 
 $H_k$.

Given a topological representative $f : G \to G$ of $\phi \in \out$, we say that $f$ \textit{respects} the filtration or 
that the filtration is \textit{$f$-invariant} if $f(G_k) \subset G_k$ for all $k$. If this is the case then we also say that the 
filtration is \textit{reduced} if for each free factor system $\mathcal{F}$ which is invariant under $\phi^i$ for some $i \ge 1$, if 
$[\pi_1 G_{r-1}] \sqsubset \mathcal{F} \sqsubset [\pi_1 G_r]$ then either $\mathcal{F} = [\pi_1 G_{r-1}]$ or $\mathcal{F} = [\pi_1 G_r]$.

Given an $f$-invariant filtration, for each stratum $H_k$ with edges $\{E_1,\ldots,E_m\}$, define
the \textit{transition matrix} of $H_k$ to be the square matrix whose $j^{\text{th}}$ column records the number of times 
$f(E_j)$ crosses the other edges. If $M_k$ is the zero matrix then we say that $H_k$ is a \textit{zero stratum}. 
If $M_k$ irreducible --- meaning that for each $i,j$ there exists $p$ such that the $i,j$ entry of the $p^{\text{th}}$ 
power of the matrix is nonzero --- then we say that $H_k$ is \textit{irreducible}; and if one can furthermore choose $p$ independently 
of $i,j$ then $H_k$ is \textit{aperiodic}. Assuming that $H_k$ is irreducible, by  Perron-Frobenius theorem, the matrix $M_k$ 
a unique eigenvalue $\lambda \ge 1$, called the \textit{Perron-Frobenius eigenvalue}, for which some associated eigenvector has 
positive entries: if $\lambda>1$ then we say that $H_k$ is an \textit{exponentially growing} or EG stratum; whereas if $\lambda=1$ 
then $H_k$ is a \textit{nonexponentially growing} or NEG stratum. An NEG stratum that is neither fixed nor linear is called a \emph{superlinear} edge.

\subsection{Weak topology}
\label{sec:2}
Given any finite graph $G$, let $\widehat{\mathcal{B}}(G)$ denote the compact space of equivalence classes of circuits in $G$ and paths in $G$, whose endpoints (if any) are vertices of $G$. We give this space the \textit{weak topology}.
Namely, for each finite path $\gamma$ in $G$, we have one basis element $\widehat{N}(G,\gamma)$ which contains all paths and circuits in $\widehat{\mathcal{B}}(G)$ which have $\gamma$ as its subpath.
Let $\mathcal{B}(G)\subset \widehat{\mathcal{B}}(G)$ be the compact subspace of all lines in $G$ with the induced topology. One can give an equivalent description of $\mathcal{B}(G)$ following \cite{BFH-00}.
A line is completely determined, up to reversal of direction, by two distinct points in $\partial \mathbb{F}$, since there only one line that joins these two points. 
We can then induce the weak topology on the set of lines coming from the Cantor set $\partial \mathbb{F}$. More explicitly,
let $\widetilde{\mathcal{B}}=\{ \partial \mathbb{F} \times \partial \mathbb{F} - \vartriangle \}/(\mathbb{Z}_2)$, where $\vartriangle$ is the diagonal and $\mathbb{Z}_2$ acts by interchanging factors. We can put the weak topology on
$\widetilde{\mathcal{B}}$, induced by Cantor topology on $\partial \mathbb{F}$. The group $\mathbb{F}$ acts on $\widetilde{\mathcal{B}}$ with a compact but non-Hausdorff quotient space $\mathcal{B}=\widetilde{\mathcal{B}}/\mathbb{F}$.
The quotient topology is also called the \textit{weak topology}. Elements of $\mathcal{B}$ are called \textit{lines}. A lift of a line $\gamma \in \mathcal{B}$ is an element  $\widetilde{\gamma}\in \widetilde{\mathcal{B}}$ that
projects to $\gamma$ under the quotient map and the two elements of $\widetilde{\gamma}$ are called its endpoints.

One can naturally identify the two spaces $\mathcal{B}(G)$ and $\mathcal{B}$ by considering a homeomorphism between the two Cantor sets $\partial \mathbb{F}$ and set of ends of universal cover of $G$ , where $G$ is a marked graph.
$\out$ has a natural action on  $\mathcal{B}$. The action comes from the action of Aut($\mathbb{F}$) on $\partial \mathbb{F}$. Given any two marked graphs $G,G'$ and a homotopy equivalence $f:G\rightarrow G'$ between them, the induced map
$f_\#: \widehat{\mathcal{B}}(G)\rightarrow \widehat{\mathcal{B}}(G')$ is continuous and the restriction $f_\#:\mathcal{B}(G)\rightarrow \mathcal{B}(G')$ is a homeomorphism. With respect to the identification
$\mathcal{B}(G)\approx \mathcal{B}\approx \mathcal{B}(G')$, if $f$ preserves the marking then $f_{\#}:\mathcal{B}(G)\rightarrow \mathcal{B}(G')$ is equal to the identity map on $\mathcal{B}$. When $G=G'$, $f_{\#}$ agree with their
homeomorphism $\mathcal{B}\rightarrow \mathcal{B}$ induced by the outer automorphism associated to $f$.

Given a marked graph $G$, a \textit{ray} in $G$ is an one-sided infinite concatenation of edges $E_0E_1E_2......$. A \textit{ray} of $\mathbb{F}$ is an
element of the orbit set $\partial\mathbb{F}/\mathbb{F}$. There is connection between these two objects which can be explained as follows.
Two  rays in $G$ are asymptotic if they have equal subrays, and
this is an equivalence relation on the set of rays in $G$. The set of asymptotic equivalence
classes of rays $\rho$ in $G$ is in natural bijection with $\partial\mathbb{F}/\mathbb{F}$ where $\rho$ in $G$ corresponds to end
$\xi\in\partial\mathbb{F}/\mathbb{F}$ if there is a lift $\tilde{\rho} \subset G$ of $\rho$ and a lift
$\tilde{\xi}\in\partial\mathbb{F}$ of $\xi$, such that $\tilde{\rho}$ converges to $\tilde{\xi}$ in the
Gromov compactification of $\tilde{G}$. A ray $\rho$ is often said to be the realization of $\xi$ if the above
conditions are satisfied.

A line(path) $\gamma$ is said to be \textit{weakly attracted} to a line(path) $\beta$ under the action of $\phi\in\out$, if the $\phi^k(\gamma)$ converges to $\beta$ in the weak topology. This is same as saying, for any given finite subpath of $\beta$, $\phi^k(\gamma)$
contains that subpath for some value of $k$; similarly if we have a homotopy equivalence $f:G\rightarrow G$,  a line(path) $\gamma$ is said to be \textit{weakly attracted} to a line(path) $\beta$ under the action of $f_{\#}$ if the $f_{\#}^k(\gamma)$ converges to $\beta$
in the weak topology. The \textit{accumulation set} of a ray $\gamma$ in $G$ is the set of lines  $l\in \mathcal{B}(G)$ which are elements of the weak closure of $\gamma$; which is same as saying every finite subpath of $l$
occurs infinitely many times as a subpath $\gamma$. The weak accumulation set of some $\xi\in\partial\mathbb{F}$ is the set of lines in the weak closure
of any of the asymptotic rays in its equivalence class. We call this the \textit{weak closure} of $\xi$.

\subsection{Free factor systems and subgroup systems}
\label{sec:3}

Define a \textit{subgroup system} $\mathcal{A} = \{[H_1], [H_2], .... ,[H_k]\}$ to be a finite collection of conjugacy classes of finite rank subgroups $H_i<\F$. Define a subgroup system to be \textit{malnormal} if for any $[H_i],[H_j]\in \mathcal{A}$, 
if $H_i^x\cap H_j$ is nontrivial then $i=j$ and $x\in H_i$. Two subgroup systems $\mathcal{A}$ and $\mathcal{A}'$ are said to be \textit{\textbf{mutually malnormal}} if both $H_i^x\cap H'_j$ and $H_i\cap (H'_j)^x$ are trivial for every 
$[H_i]\in \mathcal{A}, [H'_j]\in \mathcal{A}'$ and $x\in \F$.  Given two subgroup systems $\mathcal{A}, \mathcal{A}'$ we give a partial ordering to set of all free factor systems by defining 
$\mathcal{A}\sqsubset \mathcal{A}'$ if for each conjugacy class of subgroup $[A]\in \mathcal{A}$ 
there exists some conjugacy class of subgroup $[A']\in \mathcal{A}'$ such that $A < A'$.

Given a finite collection $\{K_1, K_2,.....,K_s\}$ of subgroups of $\F$ , we say that this collection determines a \textit{free factorization} of $\F$ if $\F$ is the free product of these subgroups, that is, 
$\F = K_1 * K_2 * .....* K_s$. The conjugacy class of a subgroup is denoted by [$K_i$]. 
A \emph{free factor system} is a finite collection of conjugacy classes of subgroups of $\F$ , $\mathcal{K}:=\{[K_1], [K_2],.... [K_p]\}$ such that there is a free factorization of $\F$ of the form 
$\F = K_1 * K_2 * ....* B$, where $B$ is some finite rank subgroup of $\F$ (it may be trivial). 
There is an action of $\out$ on the set of all conjugacy classes of subgroups of $\F$. This action induces an action of $\out$ on the set of all free factor systems. For notation simplicity we will avoid writing $[K]$ all the time and write $K$ instead, when we discuss   
the action of $\out$ on this conjugacy class of subgroup $K$ or anything regarding the conjugacy class [$K$]. It will be understood that we actually mean [$K$].

For any marked graph $G$ and any subgraph $H \subset G$, the fundamental groups of the noncontractible components of $H$ form a free factor system . We denote this by $[H]$. A subgraph of $G$ which has no valence 1 vertex is called a \textit{core graph}. 
Every subgraph has a unique core graph, which is a deformation retract of its noncontractible components.
A free factor system $\mathcal{K}$ \textit{carries a conjugacy class} $[c]$ in $\F$ if there exists some $[K] \in \mathcal{K}$ such that $c\in K$. We say that $\mathcal{K}$ \textit{carries the line} $\gamma \in \mathcal{B}$ if for any marked graph $G$ 
the realization of $\gamma$ in $G$ is the weak limit of a sequence of circuits in $G$ each of which is carried by $\mathcal{K}$.
An equivalent way of saying this is: for any marked graph $G$ and a subgraph $H \subset G $ with $[H]=\mathcal{K}$, the realization of $\gamma$ in $G$ is contained in $H$.

A subgroup system $\mathcal{A}$ carries a conjugacy class $[c]\in \F$ if there exists some $[A]\in\mathcal{A}$ such that $c\in A$. Also, we say that $\mathcal{A}$ carries a line $\gamma$ if one of the following equivalent conditions hold:
\begin{itemize}
 \item $\gamma$ is the weak limit of a sequence of conjugacy classes carries by $\mathcal{A}$.
 \item There exists some $[A]\in \mathcal{A}$ and a lift $\widetilde{\gamma}$ of $\gamma$ so that the endpoints of $\widetilde{\gamma}$ are in $\partial A$.
 
\end{itemize}
The following fact is an important property of lines carried by a subgroup system. The proof is by using the observation that $A<\F$ is of finite rank implies that $\partial A$ is a compact subset of $\partial \F$
\begin{lemma}
 For each subgroup system $\mathcal{A}$ the set of lines carried by $\mathcal{A}$ is a closed subset of $\mathcal{B}$
\end{lemma}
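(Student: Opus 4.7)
The plan is to work in the Cantor-set description of lines, namely $\mathcal{B}=\widetilde{\mathcal{B}}/\F$ with $\widetilde{\mathcal{B}}=(\partial\F\times\partial\F\setminus\triangle)/\mathbb{Z}_2$, and to verify closedness by lifting. Using the second bulleted characterization of ``carried'' in the excerpt, a line $\gamma\in\mathcal{B}$ is carried by $\mathcal{A}$ if and only if some lift $\widetilde{\gamma}\in\widetilde{\mathcal{B}}$ has both endpoints in $\partial A$ for some $[A]\in\mathcal{A}$ (fixing once and for all a representative $A$ of each class). Choose the auxiliary set
\[
\widetilde{L}(\mathcal{A}) \;:=\; \bigcup_{[A]\in\mathcal{A}}\ \bigl(\partial A\times \partial A\setminus\triangle\bigr)/\mathbb{Z}_2 \;\subset\; \widetilde{\mathcal{B}}.
\]
Since the set of carried lines pulls back in $\widetilde{\mathcal{B}}$ to the $\F$-saturation $\F\cdot\widetilde{L}(\mathcal{A})$, and $\mathcal{B}$ carries the quotient topology, it suffices to prove that $\F\cdot\widetilde{L}(\mathcal{A})$ is closed in $\widetilde{\mathcal{B}}$.

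The next step is a sequential argument. Take $(\xi_1^n,\xi_2^n)\to(\xi_1,\xi_2)$ in $\widetilde{\mathcal{B}}$ with each $(\xi_1^n,\xi_2^n)=(g_n\alpha_n,g_n\beta_n)$, $g_n\in\F$, $\alpha_n,\beta_n\in\partial A^{(n)}$ for some representative $A^{(n)}$ from $\mathcal{A}$. Because $\mathcal{A}$ is a finite collection, I pass to a subsequence on which $A^{(n)}=A$ is constant. Now the compactness of $\partial A$ in $\partial\F$, which follows from $A$ being a finitely generated (hence quasiconvex) subgroup of $\F$, lets me extract a further subsequence along which $\alpha_n\to\alpha$ and $\beta_n\to\beta$ in the compact set $\partial A$. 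It remains to control the group elements $g_n$ and promote the convergence to a statement about the limit pair.

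The main obstacle is ruling out the case that $g_n$ leaves every finite subset of $\F$. The plan is to exploit the convergence-group action of the (hyperbolic) free group $\F$ on $\partial\F$: after subsequencing, either the cosets $g_nA$ are eventually constant equal to some $gA$, in which case the sequence lies in the single compact set $(g\partial A\times g\partial A\setminus\triangle)/\mathbb{Z}_2$ and the limit $(\xi_1,\xi_2)$ lies in that set, placing it in $\widetilde{L}(\mathcal{A})$; or else $g_n\to\infty$ in the Cayley graph of $\F$ even after arbitrary modification by elements of $A$, in which case the translates $g_n\partial A$ of the compact proper subset $\partial A\subset\partial\F$ Hausdorff-collapse to a single point $a\in\partial\F$. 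In the latter case both $\xi_1=\lim g_n\alpha_n$ and $\xi_2=\lim g_n\beta_n$ equal $a$, so $(\xi_1,\xi_2)$ lies on the diagonal, contradicting $(\xi_1,\xi_2)\in\widetilde{\mathcal{B}}$. Hence only the first alternative survives, and the limit pair belongs to $\F\cdot\widetilde{L}(\mathcal{A})$, which proves the set is closed. The compactness of $\partial A$ is used in both the extraction of a limit of $(\alpha_n,\beta_n)$ and, crucially, in making sense of the ``collapse'' step that eliminates the divergent subsequence; this is the only essential use of the finite-rank hypothesis on the subgroups constituting $\mathcal{A}$.
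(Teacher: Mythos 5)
Your proof is correct, and it genuinely fills a gap: the paper states this lemma with only a one-line hint (``use that $\partial A$ is compact'') and supplies no argument. Your route --- lifting to $\widetilde{\mathcal{B}}$, reducing to sequential closedness of the $\F$-saturation $\F\cdot\widetilde{L}(\mathcal{A})$, passing to a constant conjugacy class, and then dichotomizing over whether $d(e,g_nA)$ stays bounded or not --- is a natural and complete way to execute that hint. The key step the hint does \emph{not} flag is the coset-collapse fact: for a finitely generated (hence quasiconvex) subgroup $A\leq\F$ with $d(e,g_nA)\to\infty$, the translates $g_n\partial A$ contract to a point in $\partial\F$. That is exactly what rules out escape of the coset representatives and turns compactness of $\partial A$ into closedness of the saturated set; without it, the argument would stall at the divergent case. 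One small point worth making explicit in a final writeup: you should dispose of the degenerate cases $\partial A=\emptyset$ (trivial $A$, nothing to prove) and $[\F:A]<\infty$ (then $\partial A=\partial\F$, $\widetilde{L}$ is everything, and only the bounded-coset branch of the dichotomy ever occurs), so that the collapse lemma is only invoked when $\partial A$ is a proper compact subset, as its standard statement requires.
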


Given a subgroup systems $\mathcal{A}, \mathcal{A}'$ and a free factor system $\mathcal{F}$ such that $\mathcal{F}\sqsubset\mathcal{A}$ and 
$\mathcal{F}\sqsubset\mathcal{A}'$ we say that the subgroup systems $\mathcal{A}, \mathcal{A}'$ are \textbf{\emph{mutually malnormal relative to $\mathcal{F}$}} 
when a conjugacy class $[c]$ is carried by both $\mathcal{A}$ and $\mathcal{A}'$ if and only if $[c]$ is carried by $\mathcal{F}$.

The following lemma describes the \emph{meet} of free factor systems and is used to define the free factor support of a set of lines.
\begin{lemma}
 [\cite{BFH-00}, Section 2.6] Every collection $\{\mathcal{K}_i\}$ of free factor systems has a well-defined meet $\wedge\{\mathcal{K}_i\} $, which is the unique maximal free factor system $\mathcal{K}$ such that $\mathcal{K}\sqsubset \mathcal{K}_i$ for all $i$. Moreover, 
 for any free factor $F< \F$ we have $[F]\in \wedge\{\mathcal{K}_i\}$ if and only if there exists an indexed collection of subgroups $\{F_i\}_{i\in I}$ such that $[A_i]\in \mathcal{K}_i$ for each $i$ and $A=\bigcap_{i\in I} A_i$. 
\end{lemma}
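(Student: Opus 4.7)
The plan is to construct the meet first for two free factor systems and then bootstrap to arbitrary indexed collections, leaning on the classical fact (provable via Bass-Serre theory on the Grushko tree, or equivalently via Stallings foldings) that the intersection of a free factor of $\F$ with a conjugate of another free factor of $\F$ is itself either trivial or a free factor of $\F$.

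For two free factor systems $\mathcal{K}_1,\mathcal{K}_2$, I would define $\mathcal{K}_1\wedge\mathcal{K}_2$ to be the set of $\F$-conjugacy classes of nontrivial subgroups of the form $A_1\cap g A_2 g^{-1}$, where $[A_i]\in\mathcal{K}_i$ and $g\in\F$. To see this is a genuine free factor system, I would realize $\mathcal{K}_1$ as the core of a subgraph $H_1\subset G$ in some marked graph $G$, lift to the Bass-Serre tree $T_1$ dual to the resulting splitting, and analyze how representatives of the classes in $\mathcal{K}_2$ act on $T_1$: their vertex stabilizers produce exactly the conjugacy classes of intersections above, and malnormality of free factor systems keeps the list finite. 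Assembling the resulting collection of subgroups into a single free product decomposition of $\F$ gives the desired free factor system, and the relation $\mathcal{K}_1\wedge\mathcal{K}_2\sqsubset\mathcal{K}_i$ is immediate from the definition. Maximality is then tautological: if $\mathcal{K}\sqsubset\mathcal{K}_1$ and $\mathcal{K}\sqsubset\mathcal{K}_2$, then each $[F]\in\mathcal{K}$ admits a representative $F\le A_1$ with $[A_1]\in\mathcal{K}_1$ and a conjugate $F^h\le A_2$ with $[A_2]\in\mathcal{K}_2$, placing $F$ inside $A_1\cap h A_2 h^{-1}$.

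To pass to arbitrary indexed collections $\{\mathcal{K}_i\}_{i\in I}$, I would exploit well-foundedness of the partial order $\sqsubset$ on free factor systems of $\F$: the complexity (for instance, total rank plus number of conjugacy classes) is bounded above by the rank of $\F$, so any strictly $\sqsubset$-descending chain of finite meets $\mathcal{K}_{i_1}\wedge\mathcal{K}_{i_1}\wedge\cdots\wedge\mathcal{K}_{i_n}$ stabilizes. The stabilized value $\mathcal{K}$ satisfies $\mathcal{K}\sqsubset\mathcal{K}_i$ for every $i\in I$ (since meeting with any further $\mathcal{K}_j$ cannot decrease the complexity), and maximality is inherited from the two-fold case. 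The explicit characterization in the second sentence then falls out: the ``only if'' direction reads directly off the construction of finite meets as intersections, and the ``if'' direction uses that any intersection $\bigcap A_i$ eventually equals a finite subintersection corresponding to the stabilization, which by construction represents a class in $\mathcal{K}$.

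The main obstacle is the foundational input: verifying that the family of intersections $\{A_1\cap gA_2g^{-1}\}$ is finite up to conjugacy and simultaneously decomposes $\F$ as a free product with a (possibly trivial) free complement. This is exactly the Bass-Serre/Stallings content of \cite{BFH-00}, Section 2.6, and once it is in hand the rest of the proof is bookkeeping.
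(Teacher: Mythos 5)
The paper does not prove this lemma; it is cited as-is from \cite{BFH-00}, Section 2.6, so there is no internal proof to compare against. Judged on its own terms, your plan is a reasonable reconstruction of the BFH-00 argument: reduce to the binary meet, identify it with the collection of conjugacy classes of nontrivial intersections $A_1\cap gA_2g^{-1}$, verify this is a free factor system via Bass-Serre/Kurosh, and then exploit the finite height of the poset of free factor systems to handle arbitrary index sets. That is the spirit of the cited source, and the ``main obstacle'' you isolate (that the family of intersections really does assemble into a free factor system of $\F$) is indeed the crux and exactly what BFH-00 supply.

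Two small points deserve tightening. First, the finiteness of the conjugacy classes of nontrivial intersections $A_1\cap gA_2g^{-1}$, and the fact that they can be promoted simultaneously to free factors of $\F$, is a Bass-Serre/Kurosh phenomenon rather than a consequence of malnormality alone; malnormality is what makes distinct intersections pairwise independent, but Kurosh is what makes them free factors and bounds their number. You reach for Bass-Serre elsewhere in the sketch, so this is phrasing more than substance. Second, in the ``if'' direction of the characterization you implicitly identify two a priori different stabilizations: the stabilization of the intersections $F_J=\bigcap_{i\in J}A_i$ as the finite subset $J$ grows, and the stabilization of the finite meets $\bigwedge_{i\in J}\mathcal{K}_i$. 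Both stabilize by finite rank, but the same $J$ need not witness both; the fix is to take the union of two witnessing finite sets, which is easy but should be said explicitly to close the loop.
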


From \cite{BFH-00}
The \textit{free factor support} of a set of lines $B$ in $\mathcal{B}$ is (denoted by $\mathcal{F}_{supp}(B)$) defined as the meet of all free factor systems that carries $B$. If $B$ is a single line then $\mathcal{F}_{supp}(B)$ is 
single free factor. We say that a set of lines, $B$, is \textit{filling} if $\mathcal{F}_{supp}(B)=[\F]$

\subsection{Topological representatives and Train track maps}
Given $\phi\in\out$ a \textit{topological representative} is a homotopy equivalence $f:G\rightarrow G$ such that $\rho: R_r \rightarrow G$ is a marked graph, 
$f$ takes vertices to vertices and edges to paths and $\overline{\rho}\circ f \circ \rho: R_r \rightarrow R_r$ represents $R_r$. A nontrivial path $\gamma$ in $G$ 
is a \textit{periodic Nielsen path} if there exists a $k$ such that $f^k_\#(\gamma)=\gamma$;
the minimal such $k$ is called the period and if $k=1$, we call such a path \textit{Nielsen path}. A periodic Nielsen path is \textit{indivisible} if it cannot 
be written as a concatenation of two or more nontrivial periodic Nielsen paths.

Given a subgraph $H\subset G$ let $G\setminus H$ denote the union of edges in $G$ that are not in $H$.

Given a marked graph $G$ and a homotopy equivalence $f:G\rightarrow G$ that takes edges to paths, one can define a new map $Tf$ by setting $Tf(E)$ 
to be the first edge in the edge path associated to $f(E)$; similarly let $Tf(E_i,E_j) = (Tf(E_i),Tf(E_j))$. So $Tf$ is a map that takes turns to turns. We say that a 
non-degenerate turn is illegal if for some iterate of $Tf$ the turn becomes degenerate; otherwise the
 turn is legal. A path is said to be legal if it contains only legal turns and it is $r-legal$ if it is of height $r$ and all its illegal turns are in $G_{r-1}$.

 \textbf{Relative train track map.} Given $\phi\in \out$ and a topological representative $f:G\rightarrow G$ with a filtration $G_0\subset G_1\subset \cdot\cdot\cdot\subset G_k$ which is preserved by $f$, 
 we say that $f$ is a train relative train track map if the following conditions are satisfied: \label{rtt}
 \begin{enumerate}
  \item $f$ maps r-legal paths to legal r-paths.
  \item If $\gamma$ is a connecting path of $H_r$, then $f_\#(\gamma)$ is a connecting path of $H_r$.
  \item If $E$ is an edge in $H_r$ then $Tf(E)$ is an edge in $H_r$. $Df$ maps the set of directions of height $r$ to itself. In particular, 
  every turn consisting of a direction of height $r$ and one of height $< r$  is legal.
 \end{enumerate}

 The following result is in \cite[Theorem 5.1.5]{BFH-00} which assures the existence of a good kind of relative train track map which is the one 
 we will be needing here.
 
 \begin{lemma}
  Suppose $\phi\in \out$ and $\mathcal{F}$ is a $\phi-$invariant free factor system. Then $\phi$ 
  has a improved relative train track representative with a filtration element that realizes $\mathcal{F}$.
 \end{lemma}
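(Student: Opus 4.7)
The statement is essentially a direct application of \cite[Theorem 5.1.5]{BFH-00}, so the plan is to reduce the lemma to that theorem by showing that the hypothesis ``$\mathcal{F}$ is $\phi$-invariant'' is exactly what is needed to prescribe a filtration element realizing $\mathcal{F}$.

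First, I would begin with any topological representative $f_0 \colon G_0 \to G_0$ of $\phi$ that respects a filtration in which some core filtration element $K \subset G_0$ satisfies $[K] = \mathcal{F}$. Such a representative exists whenever $\mathcal{F}$ is $\phi$-invariant: write $\F = F^1 \ast \cdots \ast F^k \ast B$ with $\mathcal{F} = \{[F^1],\ldots,[F^k]\}$, build a marked graph as a wedge of rose-like subgraphs $K^i$ with $\pi_1(K^i) = F^i$ together with a rose representing $B$ and arcs joining these pieces, and choose a homotopy equivalence representing $\phi$ that sends the subgraph $K = \sqcup K^i$ into itself (using that each $[F^i]$ is permuted by $\phi$).

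Second, I would feed this filtered topological representative into the Bestvina--Feighn--Handel improvement algorithm. The algorithm replaces $f_0$ by a sequence of topological representatives via elementary moves (subdivision, folding, collapsing invariant forests in zero strata, core subdivision, sliding, etc.), each of which either leaves each stratum invariant set-theoretically or modifies only strata strictly above the height of $K$. In particular, every intermediate representative continues to possess a filtration element realizing $\mathcal{F}$. The output, by \cite[Theorem 5.1.5]{BFH-00}, is an improved relative train track representative $f \colon G \to G$ with filtration $\emptyset = G_0 \subset G_1 \subset \cdots \subset G_k = G$; the construction delivers a filtration element $G_s$ with $[G_s] = \mathcal{F}$.

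The only potential obstacle is verifying that the improvement moves really do respect the chosen filtration element $K$, but this is built into the BFH algorithm: each move is applied to a single stratum at a time and any newly introduced edges or vertices live at a prescribed height, so the property ``some core filtration element realizes $\mathcal{F}$'' is preserved throughout. Consequently the desired representative exists, and the proof amounts to invoking \cite[Theorem 5.1.5]{BFH-00} with this choice of initial $\phi$-invariant filtration.
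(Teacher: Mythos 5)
Your proposal is correct and follows exactly the route the paper takes: the lemma is stated in the paper as a direct restatement of \cite[Theorem 5.1.5]{BFH-00} with no additional proof supplied, and you correctly identify this as the sole content of the argument. The extra paragraphs you include sketching the BFH improvement algorithm and why it preserves a filtration element realizing $\mathcal{F}$ are an accurate unpacking of why the cited theorem applies, but they are not needed beyond the citation.
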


\subsection{Attracting Laminations and their properties under CTs}
\label{sec:6}
For any marked graph $G$, the natural identification $\mathcal{B}\approx \mathcal{B}(G)$ induces a bijection between the closed subsets of $\mathcal{B}$ and the closed subsets of $\mathcal{B}(G)$. A closed 
subset in any of these two cases is called a \textit{lamination}, denoted by $\Lambda$. Given a lamination $\Lambda\subset \mathcal{B}$ we look at the corresponding lamination in $\mathcal{B}(G)$ as the 
realization of $\Lambda$ in $G$. An element $\lambda\in \Lambda$ is called a \textit{leaf} of the lamination.\\
A lamination $\Lambda$ is called an \textit{attracting lamination} for $\phi$ is it is the weak closure of a line $l$ (called the \textit{generic leaf of $\lambda$}) satisfying the following conditions:
\begin{itemize}
 \item $l$ is bi-recurrent leaf of $\Lambda$.
\item $l$ has an \textit{attracting neighborhood} $V$, in the weak topology, with the property that every line in $V$ is weakly attracted to $l$.
\item no lift $\widetilde{l}\in \mathcal{B}$ of $l$ is the axis of a generator of a rank 1 free factor of $\F$ .
\end{itemize}

We know from \cite{BFH-00} that with each $\phi\in \out$ we have a finite set of laminations $\mathcal{L}(\phi)$, called the set of \textit{attracting laminations} of $\phi$, and the set $\mathcal{L}(\phi)$ is 
invariant under the action of $\phi$. When it is nonempty $\phi$ can permute the elements of $\mathcal{L}(\phi)$ if $\phi$ is not rotationless. For rotationless $\phi$ $\mathcal{L}(\phi)$ is a fixed set. Attracting laminations are directly related to EG stratas. The following fact is a result from \cite{BFH-00} section 3.

\textbf{Dual lamination pairs. }
We have already seen that the set of lines carried by a free factor system is a closed set and so, together with the fact that the weak closure of a generic leaf $\lambda$ of an attracting lamination $\Lambda$ is the whole lamination $\Lambda$ tells us that 
$\mathcal{F}_{supp}(\lambda) = \mathcal{F}_{supp}(\Lambda)$. In particular the free factor support of an attracting lamination $\Lambda$ is a single free factor.
Let $\phi\in \out$ be an outer automorphism and $\Lambda^+_\phi$ be an attracting lamination of $\phi$ and $\Lambda^-_\phi$ be an attracting lamination of $\phi^{-1}$. We say that this lamination pair is a \textit{dual lamination pair} if $\mathcal{F}_{supp}(\Lambda^+_\phi) = \mathcal{F}_{supp}(\Lambda^-_\phi)$. 
By Lemma 3.2.4 of \cite{BFH-00} there is bijection between $\mathcal{L}(\phi)$ and $\mathcal{L}(\phi^{-1})$ induced by this duality relation. 
The following fact is Lemma 2.35 in \cite{HM-13c}; it establishes an important property of lamination pairs in terms of inclusion. We will use it in proving duality for the attracting and repelling laminations we produce in Proposition \ref{pingpong}. 
\begin{lemma}
\label{lam_incl}
 If $\Lambda^\pm_i, \Lambda^\pm_j$ are two dual lamination pairs for $\phi\in \out$  then $\Lambda^+_i\subset \Lambda^+_j$ if and only if $\Lambda^-_i\subset \Lambda^-_j$.
\end{lemma}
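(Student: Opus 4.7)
The strategy is to translate the inclusion of laminations into a filtration-theoretic condition via a CT representative, and then transport this condition across the duality pairing.

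First, by interchanging the roles of $\phi$ and $\phi^{-1}$, I only need to prove the forward implication $\Lambda^+_i\subset\Lambda^+_j\Rightarrow\Lambda^-_i\subset\Lambda^-_j$; the reverse is the same argument applied to $\phi^{-1}$. From $\Lambda^+_i\subset\Lambda^+_j$ I extract the containment of free factor supports $\mathcal{F}_{supp}(\Lambda^+_i)\sqsubset\mathcal{F}_{supp}(\Lambda^+_j)$: any line carried by $\mathcal{F}_{supp}(\Lambda^+_j)$ belongs to a closed set (by the preceding lemma on closedness of lines carried by a subgroup system), so since the generic leaf of $\Lambda^+_i$ is a leaf of $\Lambda^+_j$ and the weak closure of that generic leaf is all of $\Lambda^+_i$, every leaf of $\Lambda^+_i$ is carried by $\mathcal{F}_{supp}(\Lambda^+_j)$. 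By the duality hypothesis, $\mathcal{F}_{supp}(\Lambda^\pm_i)=[F_i]$ and $\mathcal{F}_{supp}(\Lambda^\pm_j)=[F_j]$, so the same support inclusion holds for the negative laminations.

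Next, I would pass to a rotationless power $\phi^K$ (so the duality bijection $\mathcal{L}(\phi^K)\leftrightarrow\mathcal{L}(\phi^{-K})$ is well-defined on individual laminations, not just orbits) and, using the existence result for improved relative train tracks cited earlier in the excerpt, choose a CT $f':G'\to G'$ representing $\phi^{-K}$ whose filtration realizes both $[F_i]$ and $[F_j]$. By the correspondence between attracting laminations and EG strata, $\Lambda^-_j$ corresponds to an EG stratum $H'_r\subset G'$ with $[\pi_1(G'_r)]=[F_j]$, and $\Lambda^-_i$ corresponds to an EG stratum $H'_s\subset G'$ with $s<r$ lying inside $G'_{r-1}$.

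The main obstacle is upgrading the filtration relation ``$H'_s$ lies in $G'_{r-1}$'' to an honest lamination inclusion. To handle this, I would exhibit a generic leaf $\ell^-_i$ of $\Lambda^-_i$ as a weak limit of subpaths occurring in the iterates $f'^{\,n}(E)$ for an edge $E\subset H'_r$. The generic leaf of $\Lambda^-_j$ is precisely the weak limit of such iterates; because $H'_r$ is an EG stratum lying above $H'_s$, the CT property forces $f'^{\,n}(E)$ to revisit $H'_s$-edges and their $f'$-images via connecting paths, and these sub-iterates accumulate on every finite subword of any generic leaf of $\Lambda^-_i$. Thus the weak accumulation set of $\{f'^{\,n}(E)\}$, which is $\Lambda^-_j$, contains every finite subpath of $\ell^-_i$ and therefore contains $\ell^-_i$ itself, yielding $\Lambda^-_i\subset\Lambda^-_j$. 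The delicate checking here is that iterates of $H'_r$-edges truly see $H'_s$-edges densely enough to recover the whole generic leaf of $\Lambda^-_i$; this is where one leans on irreducibility of transition submatrices of EG strata and the way $f'$ restricted to $G'_{r-1}$ interacts with $H'_s$ under the CT hypotheses.
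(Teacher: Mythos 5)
The paper does not actually prove this lemma; it is quoted verbatim from Handel--Mosher (\cite{HM-13c}, Lemma 2.35) and used as a black box, so there is no internal proof to compare against. What follows is therefore an assessment of the argument on its own terms.

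Your opening move is sound: $\Lambda^+_i\subset\Lambda^+_j$ forces $\mathcal{F}_{supp}(\Lambda^+_i)\sqsubset\mathcal{F}_{supp}(\Lambda^+_j)$ (since $\mathcal{F}_{supp}(\Lambda^+_j)$ carries every leaf of $\Lambda^+_j$, a fortiori every leaf of $\Lambda^+_i$), and the duality relation $\mathcal{F}_{supp}(\Lambda^\pm_i)=[F_i]$, $\mathcal{F}_{supp}(\Lambda^\pm_j)=[F_j]$ transports this to the $\phi^{-1}$ side. So the lemma reduces cleanly to: for attracting laminations of a single automorphism, the inclusion $\mathcal{F}_{supp}(\Lambda_1)\sqsubset\mathcal{F}_{supp}(\Lambda_2)$ implies $\Lambda_1\subset\Lambda_2$. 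This reduction is correct, but it carries essentially all of the content of the original statement, so nothing has really been reduced.

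The gap is in your proof of that implication. The phrase ``the CT property forces $f'^{\,n}(E)$ to revisit $H'_s$-edges\ldots{} and these sub-iterates accumulate on every finite subword of any generic leaf of $\Lambda^-_i$'' does not describe any axiom of (completely split) relative train track maps. What you can extract for free is much weaker: because $\mathcal{F}_{supp}(\Lambda^-_j)=[\pi_1 G'_r]$ and removing any edge of the core graph $G'_r$ strictly drops its free factor system, a generic leaf of $\Lambda^-_j$ must cross every edge of $G'_r$, so in particular it crosses $H'_s$. But crossing an $H'_s$-edge is very far from containing arbitrarily long generic leaf segments of $\Lambda^-_i$. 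The obstruction is cancellation: even though $H'_s$ is an EG stratum, an isolated $H'_s$-edge inside a longer $H'_r$-path can have its $f'$-images eaten by the surrounding $H'_r$-cancellation unless one first produces an $H'_s$-legal subsegment whose length exceeds the critical constant for $f'|_{G'_{r-1}}$. Establishing that such a long legal segment occurs somewhere in a generic leaf of $\Lambda^-_j$ is exactly the nontrivial content that ``leaning on irreducibility of transition submatrices'' is supposed to deliver, and your sketch does not deliver it. You would need to show, for example, that since the generic leaf is fixed by $f'_\#$ and bi-recurrent and crosses $H'_s$, iteration produces arbitrarily long $H'_s$-legal subpaths inside the leaf itself, and then invoke that these weakly accumulate on all of $\Lambda^-_i$. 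That step is both necessary and missing.

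A secondary but real issue: you assume a single CT can be chosen whose filtration realizes both $[F_i]$ and $[F_j]$ simultaneously. This is true, but it uses the stronger existence statement for CTs adapted to a nested sequence of $\phi$-invariant free factor systems (Feighn--Handel), not merely the single-free-factor-system version quoted in the excerpt; the assumption should be flagged explicitly. Neither issue breaks the overall skeleton, but as written the argument does not constitute a proof of the key implication, and hence not of the lemma.
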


\subsection{Nonattracting subgroup system $\mathcal{A}_{na}(\Lambda^+_\phi)$}
\label{sec:7}
The \textit{nonattracting subgroup system} of an attracting lamination contains information about lines and circuits which are not attracted to the lamination.
This is one the crucial ingredients of our proof here. First introduced by Bestvina-Feighn-Handel in \cite{BFH-00}, this was later explored in great details 
by Handel-Mosher in \cite{HM-13c}. The first step to describing the nonattracting subgroup system is to 
construct the \emph{nonattracting subgraph} as follows:

\begin{definition}
 \emph{Suppose $\phi\in\out$ is exponentially growing and $f:G\rightarrow G$ is an improved relative train track map
 representing $\phi$ such that $\Lambda^+_\phi$ is an invariant attracting lamination which corresponds to the EG stratum $H_s\in G$.
 The \textbf{nonattracting subgraph $\mathcal{Z}$} of $G$ is defined as a union of irreducible stratas $H_i$ of $G$ such that no edge in 
 $H_i$ is weakly attracted to $\Lambda^+_\phi$. This is equivalent to saying that a strata $H_r\subset G\setminus \mathcal{Z}$ if and only if there exists an 
 edge $E_r$ in $H_r$ and some $k\geq 0$ such that 
 some term in the complete splitting of $f^k_\#(E_r)$ is an edge in $H_s$. }
\end{definition}

Define the path $\widehat{\rho}_s$ to be trivial path at any chosen vertex if there does not exist any indivisible Nielsen path of height $s$, 
otherwise $\widehat{\rho}_s$ is the unique  indivisible path of height $s$.

\textbf{The groupoid  $\langle \mathcal{Z}, \widehat{\rho}_s \rangle$ - } Let $\langle \mathcal{Z}, \widehat{\rho}_s \rangle$ be the set of lines, rays, circuits and finite paths in $G$ which can be written 
as a concatenation of 
subpaths, each of which is an edge in $\mathcal{Z}$, the path $\widehat{\rho}_s$ or its inverse. Under the operation of tightened concatenation of paths
in $G$,
this set forms a groupoid (Lemma 5.6, [\cite{HM-13c}]).


Define the graph $K$ by setting $K=\mathcal{Z}$ if $\widehat{\rho}_s$ is trivial and let $h:K \rightarrow G$ be the inclusion map. Otherwise define an edge $E_\rho$ representing the domain of the Nielsen path $\rho_s:E_\rho \rightarrow G_s$, and let $K$ be the disjoint union of $\mathcal{Z}$ and $E_\rho$ with the following identification.
 Given an endpoint $x\in E_\rho$, if $\rho_s(x)\in \mathcal{Z}$ then identify $x\sim\rho_s(x)$.Given distinct endpoints $x,y\in E_\rho$, if $\rho_s(x)=\rho_s(y)\notin \mathcal{Z}$ then identify $x \sim y$. In this case define $h:K\rightarrow G$ to be the inclusion map on $K$ and the map $\rho_s$ on $E_\rho$. It is not difficult to see that the map $h$ is an immersion.
 Hence restricting $h$ to each component of $K$, we get an injection at the level of fundamental groups. The 
 \textbf{\emph{nonattracting subgroup system}} $\mathcal{A}_{na}(\Lambda^+_\phi)$ is defined to be the subgroup system defined by this immersion.

We will leave it to the reader to look it up in \cite{HM-13c} where it is explored in details. We however list some key properties which we will be using and justifies the importance of this
subgroup system.
\begin{lemma}(\cite{HM-13c}- Lemma 1.5, 1.6)
\label{NAS}
 \begin{enumerate}
  \item The set of lines carried by $\mathcal{A}_{na}(\Lambda^+_\phi)$ is closed in the weak topology.
  \item A conjugacy class $[c]$ is not attracted to $\Lambda^+_\phi$ if and only if it is carried by $\mathcal{A}_{na}(\Lambda^+_\phi)$.
  \item $\mathcal{A}_{na}(\Lambda^+_\phi)$ does not depend on the choice of the CT representing $\phi$.
  \item  Given $\phi, \phi^{-1} \in \out$ both rotationless elements and a dual lamination pair $\Lambda^\pm_\phi$ we have $\mathcal{A}_{na}(\Lambda^+_\phi)= \mathcal{A}_{na}(\Lambda^-_\phi)$
  \item $\mathcal{A}_{na}(\Lambda^+_\phi)$ is a free factor system if and only if the stratum $H_r$ is not geometric.
  \item $\mathcal{A}_{na}(\Lambda^+_\phi)$ is malnormal.

 \end{enumerate}

\end{lemma}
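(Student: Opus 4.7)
The plan is to treat the six items roughly in the order given, since part (2) underwrites parts (3) and (4), while the last two assertions require structural information extracted from the construction of $K$ and the groupoid $\langle\mathcal{Z},\widehat{\rho}_s\rangle$. Item (1) follows immediately from the general closedness fact already quoted in the paper (every subgroup system carries a closed set of lines): each representative subgroup in $\mathcal{A}_{na}(\Lambda^+_\phi)$ has finite rank, hence compact boundary in $\partial\F$, so any weak limit of carried lines still has both endpoints in one of these compact boundary sets.

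The heart of the argument is (2). For the easy direction, if $[c]$ is carried by $\mathcal{A}_{na}(\Lambda^+_\phi)$ then its circuit in $G$ lifts through $h: K\to G$, so it decomposes as a concatenation of edges of $\mathcal{Z}$ and copies of $\widehat{\rho}_s^{\pm1}$; neither kind of term ever develops an $H_s$-edge in its iterates under $f_\#$ (by definition of $\mathcal{Z}$ for the edges and by the Nielsen property for $\widehat{\rho}_s$), so $[c]$ is not weakly attracted to $\Lambda^+_\phi$. The converse is the technical core: after passing to a high enough iterate of $\phi$ I may assume $c$ is completely split; each splitting term is a single edge of an irreducible stratum, an indivisible Nielsen path, an exceptional path, or a maximal zero-stratum subpath. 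If any such term sits in a stratum whose $f_\#$-iterates eventually contain $H_s$-edges then $[c]$ is attracted to $\Lambda^+_\phi$, contrary to hypothesis; hence every term must lie in $\mathcal{Z}$ or equal $\widehat{\rho}_s^{\pm1}$, placing $c$ in the groupoid and so carried by $\mathcal{A}_{na}(\Lambda^+_\phi)$.

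For (3) I would combine (2) with (1): the set of carried conjugacy classes is intrinsic to the pair $(\phi,\Lambda^+_\phi)$, hence so is its weak closure in $\mathcal{B}$, and a finite subgroup system is recoverable from its set of carried lines via its minimal carrying pieces, so the choice of CT does not affect the output. For (4) I would feed (2) into the Handel--Mosher weak-attraction dichotomy: with $\phi$ rotationless and $\Lambda^\pm_\phi$ dual so that $\mathcal{F}_{supp}(\Lambda^+_\phi)=\mathcal{F}_{supp}(\Lambda^-_\phi)$, any conjugacy class not weakly attracted to $\Lambda^+_\phi$ under $\phi$ fails to be weakly attracted to $\Lambda^-_\phi$ under $\phi^{-1}$ as well, and vice versa, so both subgroup systems carry the same conjugacy classes and hence coincide. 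For (5) I would inspect whether $E_\rho$ closes up in $K$ to a nontrivial loop: when $H_s$ is non-geometric, $h:K\to G$ is $\pi_1$-injective on components and their image subgroups sit in $\F$ as free factors; when $H_s$ is geometric, the identifications at the endpoints of $E_\rho$ produce a surface-type peripheral subgroup that is not a free factor.

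The hardest item is (6), malnormality. I would argue by contradiction: suppose two conjugates of representative subgroups in $\mathcal{A}_{na}(\Lambda^+_\phi)$ share a nontrivial element, equivalently two (possibly equal) components of $K$ carry a common conjugacy class in two inequivalent ways. Lifting through $\widetilde{h}:\widetilde{K}\to\widetilde{G}$, one gets two distinct lifts of a common bi-infinite line sharing an infinite subpath. The goal is to show this overlap forces the two lifts to agree up to a deck translation, contradicting their being distinct. The propagation step uses that any bi-infinite path in $\langle\mathcal{Z},\widehat{\rho}_s\rangle$ admits a \emph{unique} splitting into edges of $\mathcal{Z}$ and copies of $\widehat{\rho}_s^{\pm1}$ — this is where uniqueness of the indivisible Nielsen path of height $s$, together with properties of complete splittings for CTs, rigidly aligns the two lifts once they agree on a long enough subpath. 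The main obstacle lies precisely in setting up this rigidity: ensuring the local agreement propagates globally demands careful bookkeeping with the CT splitting theory, and this is where the delicate work resides.
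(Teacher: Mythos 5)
The paper does not actually prove this lemma: it is quoted verbatim from Handel--Mosher \cite{HM-13c} (their Lemmas 1.5, 1.6) with the explicit remark ``We will leave it to the reader to look it up in \cite{HM-13c}.'' So there is no in-paper proof to compare your sketch against; the only meaningful review is whether your reconstruction is internally coherent and would in principle succeed.

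Your sketch captures the right shape for items (1), (2) and (5), and you are right to single out (6) as the delicate one. But there are two logical-dependency problems worth flagging. First, for item (3) you argue that the output is determined by the set of carried lines ``via its minimal carrying pieces''; that recoverability step quietly requires knowing that $\mathcal{A}_{na}(\Lambda^+_\phi)$ is malnormal (or at least that it is the \emph{unique minimal} subgroup system carrying its set of lines), so (3) really rests on (6) rather than preceding it. Second, for item (4) you propose to ``feed (2) into the Handel--Mosher weak-attraction dichotomy,'' but the Weak Attraction Theorem as quoted in this paper (Lemma \ref{WAT}) is already phrased in terms of $\mathcal{A}_{na}(\Lambda^{\pm}_\phi)$ — its statement presupposes exactly the equality $\mathcal{A}_{na}(\Lambda^+_\phi)=\mathcal{A}_{na}(\Lambda^-_\phi)$ that (4) is supposed to establish. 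Using it here is circular unless you first revert to a one-sided version of the dichotomy (attraction to $\Lambda^+_\phi$ versus membership in a candidate nonattracting set defined using only the forward CT), show the analogous one-sided statement for $\phi^{-1}$, and then compare the two resulting subgroup systems. That is in fact roughly what Handel--Mosher do, but the dependency must be made explicit or the argument collapses.

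A smaller point: in the hard direction of (2) you pass to a high iterate so that the circuit is completely split and then classify splitting terms; this is fine, but you should also argue that passing to a power of $\phi$ does not change which conjugacy classes are weakly attracted to $\Lambda^+_\phi$ — true, but it uses that $\Lambda^+_\phi$ is $\phi$-invariant, which is part of the standing hypothesis on the nonattracting subgroup system and should be invoked. With those repairs, the outline is a reasonable reconstruction of the Handel--Mosher argument, though of course the heavy lifting — especially the uniqueness-of-splitting rigidity you invoke for (6) — lives in their paper.
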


\begin{remark}
 \emph{It is a part of the definition of the nonattracting subgroup system that $\Lambda^+_\phi$ has to be invariant under $\phi$. However, this might not always be the case. 
 For instance, we could have that $\phi$ permutes the elements of $\mathcal{L}(\phi)$ (the set of all attracting laminations for $\phi$). In this case one needs to pass to 
 some finite power of $\phi$ to stabilize $\Lambda^+_\phi$ and then define the nonattracting subgroup system. Feighn-Handel's work ensures that there is a maximal upper 
 bound to the power to which $\phi$ needs to be raised to achieve this, namely the \emph{rotationless power} \cite{FH-11}}.
\end{remark}

\subsection{Weak attraction theorem }
\label{sec:9}
\begin{lemma}[\cite{HM-13c} Corollary 2.17]
\label{WAT}
 Let $\phi\in \out$ be a rotationless and exponentially growing. Let $\Lambda^\pm_\phi$ be a dual lamination pair for $\phi$. Then for any line $\gamma\in\mathcal{B}$ not carried by $\mathcal{A}_{na}(\Lambda^{\pm}_\phi)$ at least one of the following hold:
\begin{enumerate}
 \item $\gamma$ is attracted to $\Lambda^+_\phi$ under iterations of $\phi$.
   \item $\gamma$ is attracted to $\Lambda^-_\phi$ under iterations of $\phi^{-1}$.
\end{enumerate}
Moreover, if $V^+_\phi$ and $V^-_\phi$ are attracting neighborhoods for the laminations $\Lambda^+_\phi$ and $\Lambda^-_\phi$ respectively, there exists an integer $l\geq0$ such that at least one of the following holds:
\begin{itemize}
 \item $\gamma\in V^-_\phi$.
\item $\phi^l(\gamma)\in V^+_\phi$
\item $\gamma$ is carried by $\mathcal{A}_{na}(\Lambda^{\pm}_\phi)$. 
\end{itemize}

\end{lemma}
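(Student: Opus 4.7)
The plan is to realize $\gamma$ inside a CT representative of $\phi$, analyze its complete splitting at the EG stratum carrying $\Lambda^+_\phi$, and run a legality/cancellation dichotomy under forward and backward iteration. First, let $f\colon G\to G$ be a CT for $\phi$ (allowed because $\phi$ is already rotationless) whose EG stratum $H_s$ carries $\Lambda^+_\phi$, and let $f'\colon G'\to G'$ be the dual CT for $\phi^{-1}$ whose EG stratum $H'_{s'}$ carries $\Lambda^-_\phi$. Shrinking the given $V^\pm_\phi$ to basic weak neighborhoods, I may take $V^+_\phi$ to consist of lines whose realization in $G$ contains a prescribed long legal $H_s$-subpath $\sigma^+$ of a generic leaf of $\Lambda^+_\phi$, and similarly $V^-_\phi$ in $G'$. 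Realize $\gamma$ as a bi-infinite edge path in $G$ and split it at each $H_s$-edge into maximal subpaths lying in the groupoid $\langle\mathcal{Z},\widehat{\rho}_s\rangle$ of Section~\ref{sec:7}. If no $H_s$-edge of $\gamma$ remains outside these subpaths, then the entire realization lies in $\langle\mathcal{Z},\widehat{\rho}_s\rangle$, so $\gamma$ is carried by $\mathcal{A}_{na}(\Lambda^+_\phi)$, which is the third conclusion (and, via Lemma~\ref{NAS}(4), also settles the main statement).

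Otherwise, fix an exceptional $H_s$-edge $E\subset\gamma$. The CT structure guarantees that $f^k_\#$ acts on $\gamma$ by the complete splitting, and because $H_s$ is EG with Perron--Frobenius eigenvalue $\lambda>1$, the legal $H_s$-segments appearing inside $f^k_\#(E)$ grow like $\lambda^k$ while the illegal turns of $\gamma$ in a bounded neighborhood of $E$ remain fixed in number. By the bounded cancellation lemma, only a uniformly bounded initial/terminal portion of these legal $H_s$-segments can be cancelled by the adjacent $\langle \mathcal{Z},\widehat{\rho}_s\rangle$-subpaths of $\gamma$ under tightening of $f^k_\#(\gamma)$. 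Hence, \emph{unless every} legal $H_s$-segment produced at $E$ is cancelled for \emph{all} $k$, some iterate satisfies $\sigma^+\subset f^l_\#(\gamma)$, that is, $\phi^l(\gamma)\in V^+_\phi$, from which weak attraction $\phi^k(\gamma)\to\Lambda^+_\phi$ follows by the defining property of the attracting neighborhood.

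It remains to handle the degenerate case in which every $H_s$-legal segment produced at $E$ is eaten by cancellation. Lemma~\ref{NAS}(4) gives $\mathcal{A}_{na}(\Lambda^+_\phi)=\mathcal{A}_{na}(\Lambda^-_\phi)$, so transporting the analysis to $G'$ and $f'$, the hypothesis that $\gamma$ is not carried by $\mathcal{A}_{na}$ likewise forces an exceptional $H'_{s'}$-edge in the $G'$-realization of $\gamma$. The cancellation pattern going forward under $f_\#$ is precisely the data of an unfolding going backward under $f'_\#$: the paired $H_s$-subpaths that annihilate each other at iterate $k$ come from the forward image of a finite subpath of $\gamma$ whose $f'_\#$-orbit contains a long legal $H'_{s'}$-segment already inside $\gamma$, up to the bounded cancellation constant absorbed into the choice of $\sigma^-$. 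Consequently $\gamma\in V^-_\phi$, and then $(\phi^{-1})^k(\gamma)\to\Lambda^-_\phi$. The three outcomes exhaust all possibilities, yielding both the trichotomy of the moreover clause and the main dichotomy.

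The main obstacle is the final step: making ``forward cancellation encodes backward unfolding'' into a precise statement. This is where one needs the full CT apparatus---uniqueness of $\widehat{\rho}_s$ as the sole indivisible Nielsen path of height $s$, $f_\#$-preservation of complete splittings, and a quantitative bounded cancellation constant compatible with both $f$ and $f'$---to rule out pathological lines that produce and destroy $H_s$-legal content indefinitely without ever being weakly attracted to either lamination in the dual pair.
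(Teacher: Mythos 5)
The paper does not prove this lemma at all---it is a direct citation of \cite{HM-13c}, Corollary~2.17---so there is no in-paper argument to compare your sketch against. Evaluating it on its own terms, the organizing intuition (split $\gamma$ at $H_s$-edges into $\langle\mathcal{Z},\widehat{\rho}_s\rangle$-blocks, then trade forward and backward attraction against being carried by the nonattracting system) is the right one, but two of the steps carry genuine gaps rather than merely omitted detail. The forward-iteration argument is too optimistic: for a lone $H_s$-edge $E$, the path $f^k_\#(E)$ is $r$-legal of length $\sim\lambda^k$, but bounded cancellation controls junction loss only \emph{per application} of $f_\#$, not cumulatively over $k$ iterates. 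The correct mechanism is the critical-constant argument (which the paper itself recalls later in Section~\ref{section4}): one needs an $r$-legal subsegment already longer than $2\,\mathrm{BCC}(f)/(\lambda-1)$ before exponential survival is guaranteed, and a single edge surrounded by illegal turns may never reach that threshold---the prototypical obstruction being an edge inside the indivisible Nielsen path $\widehat{\rho}_s$, whose iterates heal rather than grow. Your claim that ``the illegal turns of $\gamma$ in a bounded neighborhood of $E$ remain fixed in number'' is also not a CT consequence: illegal turns can cancel and new ones can be created upon tightening, and controlling this is a substantial part of the real argument.

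You are candid that the degenerate case---``forward cancellation encodes backward unfolding''---is the main obstacle, and indeed that step is not a proof but a placeholder for the actual content of the theorem. There is no soft duality argument converting perpetual forward cancellation at an $H_s$-edge into membership in $V^-_\phi$; Handel--Mosher's proof runs a delicate analysis of how the complete splitting of iterates of $\gamma$ interacts with $\widehat{\rho}_s$ and the nonattracting subgraph, and this cannot be shortcut by invoking $\mathcal{A}_{na}(\Lambda^+_\phi)=\mathcal{A}_{na}(\Lambda^-_\phi)$ alone. Separately, the ``moreover'' clause asserts a \emph{uniform} integer $l$, independent of $\gamma$, and this uniformity is essential downstream (Corollary~\ref{WATgeo}, Lemma~\ref{legality}, and the pingpong in Proposition~\ref{pingpong} all rely on a single $l$ working for all lines); your sketch produces an $l$ depending on $\gamma$ and never addresses the compactness argument needed to make it universal. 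In short, the skeleton is right but the two load-bearing steps---the critical-constant dichotomy and the uniformity---are missing, and supplying them is essentially reproving the cited theorem.
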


\begin{corollary}
\label{WATgeo}
 Let $\phi\in \out$ be exponentially growing and $\Lambda^{\pm}_\phi$ be dual lamination pair for $\phi$ such that $\phi$ fixes $\Lambda^+_{\phi}$ and $\phi^{-1}$ fixes $\Lambda^{-}_\phi $with attracting neighborhoods $V^{\pm}_\phi$. 
 Then there exists some integer $l$ such that for any line $\gamma$ in $\mathcal{B}$ one of the following occurs:
\begin{itemize}
 \item $\gamma\in V^-_\phi$.
\item $\phi^l(\gamma)\in V^+_\phi $.
\item$\gamma$ is carried by $\mathcal{A}_{na}(\Lambda^{\pm}_\phi)$ 
\end{itemize}

\end{corollary}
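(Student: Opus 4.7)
The plan is to promote the pointwise Lemma~\ref{WAT} to the uniform statement by exploiting the finite combinatorial structure of a CT representative of $\phi$. Because $\phi$ is assumed to fix both $\Lambda^+_\phi$ and $\Lambda^-_\phi$, any rotationless power $\phi^k$ shares the same dual lamination pair, the same attracting neighborhoods $V^\pm_\phi$, and the same nonattracting subgroup system $\mathcal{A}_{na}(\Lambda^\pm_\phi)$. It is therefore enough to produce a uniform integer $l'$ for $\phi^k$ and then set $l := k l'$. Fix such a $k$ and pick a CT $f: G \to G$ for $\phi^k$ with a filtration element realizing $\mathcal{A}_{na}(\Lambda^+_\phi)$, and let $H_s$ be the EG stratum of $G$ associated to $\Lambda^+_\phi$.

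Next I would choose finite legal subpaths $\tau_\pm$ of generic leaves of $\Lambda^\pm_\phi$, realized in $G$, long enough that the basic weak open sets $\widehat{N}(G,\tau_\pm)$ are contained in $V^\pm_\phi$; this is possible because the weak topology on $\mathcal{B}$ is generated by basic sets of the form $\widehat{N}(G, \cdot)$. For each edge $E$ of $G$ that is \emph{not} contained in the groupoid $\langle \mathcal{Z}, \widehat{\rho}_s\rangle$, the defining property of the nonattracting subgraph forces some term in the complete splitting of $f^m_\#(E)$ to lie in $H_s$ once $m$ is large enough, and further iteration produces arbitrarily long legal $H_s$-segments by the Perron--Frobenius expansion of that EG stratum. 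Hence for each such $E$ there is some $L_E < \infty$ with $\tau_+$ appearing as a subpath of $f^{L_E}_\#(E)$. Since $G$ has only finitely many edges, the integer $l' := \max_E L_E$ (together with the symmetric maximum coming from $\phi^{-k}$ and $\tau_-$) gives a single uniform bound.

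To conclude, consider any $\gamma \in \mathcal{B}$ realized in $G$. Either every edge of the realization lies in $\langle \mathcal{Z}, \widehat{\rho}_s\rangle$, in which case $\gamma$ is carried by $\mathcal{A}_{na}(\Lambda^\pm_\phi)$ and the third alternative holds; or there is an edge $E$ of the realization outside this groupoid, in which case $\tau_+$ appears inside $f^{l'}_\#(E)$ by the preceding paragraph, and the no-cancellation property of the complete splitting of a CT ensures that this copy of $\tau_+$ survives as a subpath of $\phi^l_\#(\gamma)$, so that $\phi^l(\gamma) \in \widehat{N}(G,\tau_+) \subset V^+_\phi$, giving the second alternative. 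The first alternative ($\gamma\in V^-_\phi$) is then redundant for this argument but is retained to match the form of Lemma~\ref{WAT} and to account for the symmetric role of $\phi^{-1}$.

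The main obstacle I expect is the no-cancellation verification in the final step: one must check that the instance of $\tau_+$ inside $f^{l'}_\#(E)$ is not absorbed by cancellations with the $f^{l'}_\#$-images of the neighboring splitting terms of $\gamma$. This is handled by the standard discipline of complete splittings in a CT (where iteration of $f_\#$ respects the splitting as concatenation without cancellation) together with the legality of $\tau_+$ inside $H_s$ which precludes any internal illegal turns. Some care is also needed if $E$ occurs inside a Nielsen-path term of $\gamma$'s splitting rather than as a stand-alone single-edge term; in that case one absorbs $E$ into the enclosing Nielsen or exceptional path and redoes the tracking on that enlarged splitting term, but the finite list of such possibilities still yields a uniform bound.
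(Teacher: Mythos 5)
Your opening diagnosis is wrong: you describe Lemma~\ref{WAT} as a ``pointwise'' statement and set out to ``promote'' it to uniformity, but the ``Moreover'' clause of Lemma~\ref{WAT} already asserts that the integer $l$ depends only on the chosen attracting neighborhoods $V^\pm_\phi$, not on the line $\gamma$ --- that uniformity is precisely the content of the Weak Attraction Theorem (Handel--Mosher, Cor.\ 2.17). Once you grant that, the corollary is a one-line reduction: pass to a rotationless power $\phi^K$ (permissible because, since $\phi$ fixes $\Lambda^\pm_\phi$, the lamination pair, the chosen neighborhoods, and $\mathcal{A}_{na}$ are unchanged under taking powers), invoke Lemma~\ref{WAT} for $\phi^K$ to get a uniform $m$, and set $l := mK$. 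That is exactly what the paper does, and there is nothing left to rebuild from the CT machinery.

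Independently of the misreading, your attempted re-derivation has a genuine gap in the final step, and it is not a technicality. You argue that if the realization of $\gamma$ crosses some edge $E$ outside the nonattracting groupoid, then $\tau_+$ appears in $f^{l'}_\#(E)$, and that the ``no-cancellation property of the complete splitting of a CT'' preserves this copy inside $\phi^l_\#(\gamma)$, concluding $\phi^l(\gamma)\in V^+_\phi$. But an arbitrary line $\gamma$ does \emph{not} have a complete splitting in the CT for $\phi^k$, so there is no discipline protecting the $f^{l'}_\#$-image of $E$ from being absorbed by cancellation with the images of its neighbors in $\gamma$. And indeed such absorption \emph{must} sometimes occur: a generic leaf $\lambda^-$ of $\Lambda^-_\phi$ is $\phi_\#$-fixed, not carried by $\mathcal{A}_{na}(\Lambda^\pm_\phi)$, and hence its realization crosses edges outside the nonattracting subgraph, yet $\phi^m_\#(\lambda^-)=\lambda^-$ never enters $V^+_\phi$. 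Your argument, if it worked, would prove that the first alternative $\gamma\in V^-_\phi$ is never needed, which is false precisely on $\Lambda^-_\phi$. The dichotomy that you can control with CT combinatorics is between ``$\gamma$ carried by $\mathcal{A}_{na}$'' and ``$\gamma$ eventually enters $V^+_\phi$ or already lies in $V^-_\phi$,'' and the $V^-_\phi$ escape hatch is exactly what your computation lost. This trichotomy is what Lemma~\ref{WAT} already packages, which is why the paper cites it rather than re-deriving it.
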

\begin{proof}
 Let $K$ be a positive integer such that $\phi^K$ is rotationless. Then by definition $\mathcal{A}_{na}(\Lambda^{\pm}_\phi)=\mathcal{A}_{na}(\Lambda^{\pm}_{\phi^K})$. Also $\phi$ fixes $\Lambda^+_{\phi}$  implies 
$\Lambda^{+}_\phi=\Lambda^+_{\phi^K}$ and the attracting neighborhoods $V^+_\phi$ and $V^+_{\phi^K}$ can also be chosen to be the same weak neighborhoods.
Then by Lemma \ref{WAT} we know that there exists some positive integer $m$ such that the conclusions of the Weak attraction theorem hold for $\phi^K$. Let $l:=mK$. This gives us the conclusions of the corollary.
Before we end we note that by definition of an attracting neighborhood $\phi(V^+_\phi)\subset V^+_\phi$ which implies that if $\phi^l(\gamma)\in V^+_\phi$, then $\phi^t(\gamma)\in V^+_\phi$ for all $t\geq l$.
\end{proof}

\begin{lemma}
 Suppose $\phi,\psi\in \out$ are two exponentially growing automorphisms with attracting laminations $\Lambda^+_\phi$ and $\Lambda^+_\psi$, respectively. If a generic leaf $\lambda\in\Lambda^+_\phi$ is in $\mathcal{B}_{na}(\Lambda^+_\psi)$ 
 then the whole lamination $\Lambda^+_\phi\subset \mathcal{B}_{na}(\Lambda^+_\psi)$. 
 
\end{lemma}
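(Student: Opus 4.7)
The plan is a short topological argument that combines two items already in the preliminaries: the closedness of the non-attracting set of lines in the weak topology, and the density of a generic leaf in its attracting lamination. Concretely, I would first identify $\mathcal{B}_{na}(\Lambda^+_\psi)$ with the set of lines in $\mathcal{B}$ that are carried by the non-attracting subgroup system $\mathcal{A}_{na}(\Lambda^+_\psi)$ (this is the standard interpretation of the notation, consistent with Lemma \ref{NAS}(2) for conjugacy classes). By Lemma \ref{NAS}(1), this set is closed in the weak topology of $\mathcal{B}$.

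Next I would invoke the definition of an attracting lamination recalled in Section~\ref{sec:6}: $\Lambda^+_\phi$ is by definition the weak closure of its generic leaf $\lambda$, i.e.\ every element of $\Lambda^+_\phi$ is a weak limit of shifts of $\lambda$. Thus the weak closure of $\{\lambda\}$ in $\mathcal{B}$ equals $\Lambda^+_\phi$ on the nose.

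Combining these two ingredients is now immediate: since $\lambda\in \mathcal{B}_{na}(\Lambda^+_\psi)$ and $\mathcal{B}_{na}(\Lambda^+_\psi)$ is weakly closed, the full weak closure of $\{\lambda\}$ lies in $\mathcal{B}_{na}(\Lambda^+_\psi)$, and that closure is exactly $\Lambda^+_\phi$. This gives the desired inclusion $\Lambda^+_\phi\subset \mathcal{B}_{na}(\Lambda^+_\psi)$. There is essentially no obstacle here: the whole statement is a packaging of the closedness assertion of Lemma \ref{NAS}(1) together with the definition of a generic leaf, and the only mild subtlety is the notational one of reading $\mathcal{B}_{na}$ as ``lines carried by $\mathcal{A}_{na}$'', which is the natural extension from conjugacy classes to lines via the closure property.
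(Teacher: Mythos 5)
Your argument rests on reading $\mathcal{B}_{na}(\Lambda^+_\psi)$ as ``the set of lines carried by $\mathcal{A}_{na}(\Lambda^+_\psi)$,'' and then the lemma would indeed be an immediate consequence of Lemma~\ref{NAS}(1) plus the fact that $\Lambda^+_\phi$ is the weak closure of $\lambda$. But that is not what $\mathcal{B}_{na}(\Lambda^+_\psi)$ denotes. Following Handel--Mosher (and as the paper's own proof makes explicit), $\mathcal{B}_{na}(\Lambda^+_\psi)$ is the set of \emph{all} lines not weakly attracted to $\Lambda^+_\psi$. This is strictly larger than the set of lines carried by $\mathcal{A}_{na}(\Lambda^+_\psi)$: for instance, a generic leaf of a repelling lamination $\Lambda^-_\psi\in\mathcal{L}(\psi^{-1})$ lies in $\mathcal{B}_{na}(\Lambda^+_\psi)$ but is not carried by $\mathcal{A}_{na}(\Lambda^+_\psi)$. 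The equivalence of ``not attracted'' and ``carried by $\mathcal{A}_{na}$'' in Lemma~\ref{NAS}(2) is stated only for conjugacy classes, and it does not extend to arbitrary lines; your proposed ``natural extension via the closure property'' is exactly where the gap lives. Moreover, $\mathcal{B}_{na}(\Lambda^+_\psi)$ is in general \emph{not} weakly closed, so the closedness step you invoke is not available for the set you actually need.

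The paper's proof handles precisely the case you are missing. It first uses the bi-recurrence of a generic leaf to show that $\lambda\in\mathcal{B}_{na}(\Lambda^+_\psi)$ forces a dichotomy: either $\lambda$ is carried by $\mathcal{A}_{na}(\Lambda^+_\psi)$, or $\lambda$ is a generic leaf of some $\Lambda^-_\psi\in\mathcal{L}(\psi^{-1})$. In the first case your closedness argument applies verbatim (now to the genuinely closed set of lines carried by $\mathcal{A}_{na}$). In the second case, taking weak closures gives $\Lambda^+_\phi=\overline{\lambda}=\Lambda^-_\psi$, and the conclusion follows because $\Lambda^-_\psi$ is not weakly attracted to $\Lambda^+_\psi$. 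To repair your proof you would need to add the bi-recurrence dichotomy and the second case; as written, the argument only covers the sublocus of $\mathcal{B}_{na}(\Lambda^+_\psi)$ where the lemma is essentially content-free.
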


\begin{proof}
 Recall that a generic leaf is bi-recurrent. Hence, $\lambda\in \mathcal{B}_{na}(\Lambda^+_\psi)$ implies that $\lambda$ is either carried by $\mathcal{A}_{na}$ or it is a generic leaf of some element of $\mathcal{L}(\psi^{-1})$. 
 First assume that $\lambda$ is carried by $\mathcal{A}_{na}$. Then using the fact that the set of lines carried by $ \mathcal{B}_{na}(\Lambda^+_\psi)$ is closed in the weak topology 
 , we can conclude that $\Lambda^+_\phi$ is carried by $\mathcal{A}_{na}(\Lambda^+_\psi)$.
 
 Alternatively, if $\lambda$ is a generic leaf of some element $\Lambda^-_\psi\in \mathcal{L}(\psi^{-1})$, then the weak closure $\overline{\lambda} = \Lambda^+_\phi = \Lambda^-_\phi$ and we know $\Lambda^-_\psi$ does not get attracted to $\Lambda^+_\psi$. 
 Hence, $\Lambda^+_\phi\subset\mathcal{B}_{na}(\Lambda^+_\psi)$.
\end{proof}

\textbf{Notations: }
\begin{itemize}
 \item Given a relative train track map $f:G\to G$ and a finite subpath $\beta\subset G$, by $N(G,\beta)$ we denote the open 
 neighborhood of $\mathcal{B}(G)$ defined by $\beta$, i.e. the set of all paths, circuits, lines in $G$ which contain 
 $\beta$ as a subpath (upto reversal of orientation).
 \item For a finite path $\beta\subset G$, we chose a lift $\tilde{\beta}$ and a lift $\tilde{f}:\tilde{G}\to\tilde{G}$ and 
 define $\tilde{f}_{\#\#}(\tilde{\beta})\subset \tilde{f}_{\#}(\tilde{\beta})\subset \tilde{G}$ to be the intersection of all 
 paths $\tilde{f}_\#(\tilde{\gamma})$ where $\tilde{\gamma}$ ranges over all paths in $\tilde{G}$ which contain $\tilde{\beta}$ 
 as a subpath. Define $f_{\#\#}(\beta)$ to be the projected image of $\tilde{f}_{\#\#}(\tilde{\beta})$ in $G$. Note that 
 $f_{\#\#}(\beta)$ is independent of the choice of $\tilde{f}$ and $\tilde{\beta}$ since $\tilde{f}_{\#\#}(\tilde{\beta})$  
 depends equivariantly on those choices. The bounded cancellation implies that $f_{\#\#}(\beta)$ is obtained from 
 $f_\#(\beta)$ by removing some initial and terminal segments of uniformly bounded length. 
 \item One can similarly define $f_{\#\#}(\beta)$ for any homotopy equivalence $f:G_1\to G_2$. The purpose of this definition is made clear from 
 Lemma \ref{expgrowth} which is used to show exponential growth and construct an attracting neighborhood using our pingpong argument that is to 
 follow.
\end{itemize}

\begin{lemma}[\cite{BFH-00} Section 2.3]
\label{doublesharp}
 If $f:G\longrightarrow G$ is a train track map for an irreducible $\phi\in$Out($F_n$) and $\alpha$ is a path in some leaf $\lambda$ of $G$ such that $\alpha=\alpha_1\alpha_2\alpha_3$ is a decomposition into subpaths such that $|\alpha_1|,|\alpha_3|\geq2C $
where C is the bounded cancellation constant for the map $f$, then $f^k_\#(\alpha_2)\subset f^k_{\#\#}(\alpha)$ for all $k\geq0$.
\end{lemma}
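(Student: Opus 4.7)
The plan is to lift to the universal cover $\widetilde{G}$, exploit that $\alpha$ is legal (being a subpath of a leaf of an attracting lamination for the train-track map $f$), and show that the amount trimmed off each end of $\widetilde{f}^k(\widetilde{\alpha})$ when passing to $\widetilde{f}^k_{\#\#}(\widetilde{\alpha})$ stays within the outer buffers $\widetilde{f}^k(\widetilde{\alpha}_1)$ and $\widetilde{f}^k(\widetilde{\alpha}_3)$. Fix a lift $\widetilde{f}$ of $f$ and a lift $\widetilde{\alpha}=\widetilde{\alpha}_1\widetilde{\alpha}_2\widetilde{\alpha}_3$ of the given decomposition. Because legality is preserved under $f$, $\widetilde{f}^k(\widetilde{\alpha})$ is already a reduced geodesic for every $k$ and factors as the tight concatenation $\widetilde{f}^k(\widetilde{\alpha}_1)\cdot\widetilde{f}^k(\widetilde{\alpha}_2)\cdot\widetilde{f}^k(\widetilde{\alpha}_3)$; in particular $f^k_\#(\alpha_2)=f^k(\alpha_2)$, so it suffices to lodge $f^k(\alpha_2)$ inside $f^k_{\#\#}(\alpha)$.

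For an arbitrary extension $\widetilde{\gamma}=\widetilde{\delta}_1\widetilde{\alpha}\widetilde{\delta}_3$ of $\widetilde{\alpha}$, the unreduced path $\widetilde{f}^k(\widetilde{\gamma})=\widetilde{f}^k(\widetilde{\delta}_1)\cdot\widetilde{f}^k(\widetilde{\alpha})\cdot\widetilde{f}^k(\widetilde{\delta}_3)$ tightens to $\widetilde{f}^k_\#(\widetilde{\gamma})$, and since the middle factor is reduced, cancellation can occur only at the two junctions flanking $\widetilde{f}^k(\widetilde{\alpha})$. The central claim is that at each junction, cancellation removes at most $C$ edges from $\widetilde{f}^k(\widetilde{\alpha})$, uniformly in $k$ and in the choice of extension $\widetilde{\gamma}$. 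Combined with the train-track property that $f$ does not collapse edges, so $|\widetilde{f}^k(\widetilde{\alpha}_1)|\geq|\widetilde{\alpha}_1|\geq 2C>C$ and likewise on the right, this confines all cancellation to the buffer images $\widetilde{f}^k(\widetilde{\alpha}_1)$ and $\widetilde{f}^k(\widetilde{\alpha}_3)$, so that $\widetilde{f}^k(\widetilde{\alpha}_2)\subset\widetilde{f}^k_\#(\widetilde{\gamma})$. Intersecting over all such $\widetilde{\gamma}$ and projecting to $G$ yields $f^k_\#(\alpha_2)\subset f^k_{\#\#}(\alpha)$.

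The main obstacle is establishing the key claim, namely the uniform-in-$k$ cancellation bound by the single-step constant $C$. A naive iteration of bounded cancellation would only yield a $k$-dependent bound, since the bounded cancellation constant for $f^k$ need not equal that of $f$. The sharper bound is obtained by a telescoping argument exploiting that $\widetilde{f}^k(\widetilde{\alpha})$ is a legal geodesic at every stage of the iteration: when one applies $f$ once more, the new cancellation produced at the junction is absorbed into, and is a retract of, the portion already cancelled on the $\widetilde{\delta}_1$-side at the previous iterate, rather than cutting further into the legal side. The factor of two in the hypothesis $|\alpha_1|,|\alpha_3|\geq 2C$ provides exactly the safety margin needed to close this telescoping uniformly over all $k\geq 0$.
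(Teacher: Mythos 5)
The high-level outline of your argument matches the paper's: lift to the universal cover $\tilde G$, use legality of $\tilde\alpha$ to see that $\tilde f^k(\tilde\alpha)$ is already tight, and try to confine the cancellation produced by an arbitrary extension $\tilde\gamma=\tilde\delta_1\tilde\alpha\tilde\delta_3$ to the buffers $\tilde f^k(\tilde\alpha_1)$ and $\tilde f^k(\tilde\alpha_3)$. The difference is that the paper simply invokes the splitting criterion from \cite{BFH-00} (``with $K=2C$, $\tilde\gamma$ splits at the endpoints of $\tilde\alpha_2$'') and is done, whereas you attempt to \emph{prove} the key step directly — and that is where there is a genuine gap.

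Your ``central claim'' is that tightening $\tilde f^k(\tilde\gamma)$ to $f^k_\#(\tilde\gamma)$ removes at most $C$ edges of $\tilde f^k(\tilde\alpha)$ at each end, \emph{uniformly in $k$}, where $C$ is the bounded cancellation constant for a \emph{single} application of $f$. This does not follow from bounded cancellation, and the ``telescoping'' sketch does not establish it. If at step $k$ the surviving part $\tilde\tau_k\subset\tilde f^k(\tilde\alpha)$ is missing an initial segment $R_k$, then applying $f$ once more produces a missing initial segment contained in $\tilde f(R_k)$ \emph{together with} up to $C$ further edges of $\tilde f(\tilde\tau_k)$; bounded cancellation explicitly allows the new cancellation to eat $\le C$ more of the legal side. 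Your assertion that the new cancellation ``is absorbed into, and is a retract of, the portion already cancelled on the $\tilde\delta_1$-side \ldots rather than cutting further into the legal side'' is exactly the statement that would need to be proved, not a consequence of anything you have established. Iterating the correct estimate gives a cumulative cancellation of order $C(L^k-1)/(L-1)$, $L=\max_E|f(E)|$, which is not uniformly bounded by $C$ and must instead be compared against the growth of the legal buffer under $f$. Closing the argument therefore requires either a genuine growth-rate comparison using the expansion of the irreducible train track map, or a direct appeal to the splitting result in \cite{BFH-00} as the paper does. As written, the pivotal step of your proof is asserted rather than justified.
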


\begin{proof}
 Let $\alpha$ be any path with a decomposition $\alpha=\alpha_1\alpha_2\alpha_3$. Take lifts to universal cover of $G$. If $\tilde{\gamma}$ is a path in $\tilde{G}$ that contains $\tilde{\alpha}$, then decompose $\tilde{\gamma}=\tilde{\gamma_1}\tilde{\alpha_2}\tilde{\gamma_3}$
such that $\tilde{\alpha_1}$ is the terminal subpath of $\tilde{\gamma_1}$ and $\tilde{\alpha_3}$ is the initial subpath of $\tilde{\gamma_3}$. 
 Following the proof of \cite{BFH-00} if $K=2C$ then $\tilde{\gamma}$ can be split at the endpoints of $\tilde{\alpha_2}$. Thus, $\tilde{f^k_\#}(\tilde{\gamma})=\tilde{f^k_\#}(\tilde{\gamma_1})\tilde{f^k_\#}(\tilde{\alpha_2})\tilde{f^k_\#}(\tilde{\gamma_3})$.
The result now follows from the definition of $f^k_{\#\#}(\alpha)$. 

\end{proof}
\begin{lemma}[\cite{HM-13c} Lemma 1.1]
 \label{expgrowth}
Let $f:G\rightarrow G$ be a homotopy equivalence representing $\phi\in \out$ such that there exists a finite path $\beta\subset G$ having the property that 
$f_{\#\#}(\beta)$ contains three disjoint copies of $\beta$. Then $\phi$ is exponentially growing and there exists a lamination $\Lambda\in\mathcal{L}(\phi)$ and a generic leaf $\lambda$ of $\Lambda\in\mathcal{L}(\phi)$ 
such that $\Lambda$ is $\phi$-invariant and $\phi$ fixes $\lambda$ preserving orientation, each generic leaf contains $f^i_{\#\#}(\beta)$ as a subpath 
for all $i\geq0$ and $N(G,\beta)$ is an attracting neighborhood for $\Lambda$.
\end{lemma}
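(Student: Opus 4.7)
The plan is to construct the desired $\phi$-invariant line $\lambda$ explicitly by a nested-union procedure in the universal cover $\tilde{G}$, and then to identify its weak closure as the required attracting lamination. First I would fix a lift $\tilde{f}:\tilde{G}\to\tilde{G}$ and a lift $\tilde{\beta}\subset\tilde{G}$. By hypothesis $\tilde{f}_{\#\#}(\tilde{\beta})$ contains three pairwise disjoint lifts of $\beta$; pick the middle one and, after replacing $\tilde{f}$ by its composition with the deck transformation that carries that middle lift onto $\tilde{\beta}$, arrange that $\tilde{\beta}\subset\tilde{f}_{\#\#}(\tilde{\beta})$ with a lift of $\beta$ lying strictly to each side of $\tilde{\beta}$ inside $\tilde{f}_{\#\#}(\tilde{\beta})$. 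Set $\tilde{\beta}_n:=\tilde{f}^n_{\#\#}(\tilde{\beta})$. Using the flanking copies of $\beta$ as the $2C$-padding required by Lemma \ref{doublesharp}, an induction yields $\tilde{\beta}_n\subset\tilde{\beta}_{n+1}$ and shows that $\tilde{\beta}_n$ contains $3^n$ pairwise disjoint lifts of $\beta$, distributed on both sides of the central $\tilde{\beta}$. The length of $\tilde{\beta}_n$ therefore grows exponentially (so $\phi$ is exponentially growing), and the nested union $\tilde{\lambda}:=\bigcup_n\tilde{\beta}_n$ is a genuinely bi-infinite path on which $\tilde{f}$ acts as an orientation-preserving self-map; projecting, $\phi$ fixes the resulting line $\lambda\subset G$ preserving orientation.

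Next, let $\Lambda$ be the weak closure of $\lambda$. For $\lambda$ to be a generic leaf of an attracting lamination I need bi-recurrence, non-periodicity, and an attracting neighborhood. Bi-recurrence is immediate: any finite subpath $\mu$ of $\lambda$ lies in some $\tilde{\beta}_n$, and for large $k$ the $3^k$ disjoint copies of $\tilde{\beta}_n$ inside $\tilde{\beta}_{n+k}$ sit on both sides of the central $\tilde{\beta}$, forcing $\mu$ to recur in both directions. $\lambda$ cannot be the axis of a generator of a rank-$1$ free factor, since such an axis is periodic with a bounded fundamental domain, whereas $\tilde{\beta}_n\subset\tilde{\lambda}$ has length growing like $3^n$. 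For the attracting neighborhood, given any line $\sigma\in N(G,\beta)$, decompose $\sigma=\sigma_L\cdot\beta\cdot\sigma_R$ with $|\sigma_L|,|\sigma_R|\geq 2C$ (automatic for a bi-infinite line); Lemma \ref{doublesharp} then gives $f^n_\#(\sigma)\supset f^n_{\#\#}(\beta)$ for all $n\geq 0$, and since $f^n_{\#\#}(\beta)$ exhausts $\lambda$ in the weak topology (up to basepoint shift), $\sigma$ is weakly attracted to $\lambda$. Finally, every generic leaf of $\Lambda$ is a weak limit of shifts of $\lambda$, so it contains each finite subpath of $\lambda$, and in particular contains $f^i_{\#\#}(\beta)$ for every $i\geq 0$.

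The main technical obstacle is the iterated bookkeeping needed to pass from the single three-copy hypothesis on $\tilde{\beta}$ to the nested inclusions $\tilde{\beta}_n\subset\tilde{\beta}_{n+1}$ with compounded $3^n$ disjoint copies. One must verify that the lateral copies of $\beta$ inside $\tilde{f}_{\#\#}(\tilde{\beta})$ continue to supply sufficient $2C$-padding for Lemma \ref{doublesharp} at every stage of the induction, and that the $\tilde{f}_{\#\#}$-images of the three disjoint copies remain pairwise disjoint in $\tilde{\beta}_{n+1}$; this is where the ``double-sharp'' trimming, rather than the ordinary $f_\#$, is essential. A related subtlety is confirming that the deck transformation chosen at the outset yields a line genuinely fixed (setwise and orientation-preservingly) by $\tilde{f}_\#$, rather than invariant only up to a non-trivial deck translation; the middle-copy choice together with the symmetric two-sided growth of $\tilde{\beta}_n$ is precisely what guarantees this.
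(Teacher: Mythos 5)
The nested-union construction you outline is the right strategy (the paper itself cites this lemma from \cite{HM-13c} rather than reproving it), but two points need attention. A minor one first: the containment $f^n_{\#\#}(\beta)\subset f^n_\#(\sigma)$ for any superpath $\sigma\supset\beta$, and the nesting $\tilde{\beta}_n\subset\tilde{\beta}_{n+1}$ with $\tilde{\beta}_n=(\tilde{f}^n)_{\#\#}(\tilde{\beta})$, both follow directly from the definition of $f_{\#\#}$ as an intersection over superpaths once the normalization $\tilde{\beta}\subset\tilde{f}_{\#\#}(\tilde{\beta})$ is arranged. Indeed, for any $\tilde{\gamma}\supset\tilde{\beta}$ one has $\tilde{f}_\#(\tilde{\gamma})\supset\tilde{f}_{\#\#}(\tilde{\beta})\supset\tilde{\beta}$, hence $\tilde{f}^{n+1}_\#(\tilde{\gamma})\supset(\tilde{f}^n)_{\#\#}(\tilde{\beta})$, and intersecting over $\tilde{\gamma}$ gives the nesting. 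Lemma \ref{doublesharp} and the $2C$-padding play no role here, and the hypothesis does not require $|\beta|\geq 2C$, so your concern about maintaining padding through the induction is misplaced.

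The substantive gap is bi-infiniteness of $\tilde{\lambda}$. Nesting alone shows $|\tilde{\beta}_n|\to\infty$, but it does not preclude one end of $\tilde{\beta}_n$ from stabilizing, which would leave a ray rather than a line. To rule this out one has to show that $\tilde{\beta}_{n+1}$ strictly extends $\tilde{\beta}_n$ on both sides for every $n$. The relevant tool is the equivariance $\tilde{f}^n_{\#\#}(g\tilde{\beta})=\Phi^n(g)\tilde{\beta}_n$: since $\tilde{\beta}_1$ contains $g_L\tilde{\beta}$ strictly left and $g_R\tilde{\beta}$ strictly right of the central $\tilde{\beta}$, one gets $\tilde{\beta}_{n+1}\supset\Phi^n(g_L)\tilde{\beta}_n\cup\tilde{\beta}_n\cup\Phi^n(g_R)\tilde{\beta}_n$, and one must argue that these three translates are pairwise distinct and positioned with one strictly to each side of $\tilde{\beta}_n$. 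The same issue underwrites your bi-recurrence argument, which needs the recurring copies to be disjoint and two-sidedly distributed, and it is also what guarantees $\tilde{f}$ genuinely fixes the resulting line rather than shifting it. You flag exactly this as ``bookkeeping'' at the end, but the proposal asserts rather than proves it. The remaining ingredients (non-periodicity via unbounded length, $\Lambda$ as the weak closure of $\lambda$, the attracting-neighborhood argument, and the claim that every generic leaf contains each $f^i_{\#\#}(\beta)$) are all sound.
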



\section{Relatively irreducible free subgroups}
\label{3}
For the sake of keeping this work concise, we will only briefly go through the definitions and the reader is requested to refer to the work of Handel and 
Mosher titled ``Subgroup decomposition in $\out$'' (part IV in particular \cite{HM-13d}). 

\begin{definition}
Let $\phi \in \out$ and   $\mathcal{F}$ be a proper $\phi-$invariant free factor system. Then $\phi$ is said to be \emph{fully-irreducible relative to $\mathcal{F}$} if 
every component of $\mathcal{F}$ is invariant under $\phi$ and there does 
not exist any proper $\phi$-periodic free factor system $\mathcal{F}'$ such that $\mathcal{F} \sqsubset \mathcal{F}'$ and $\mathcal{F} \ne \mathcal{F}'$.
\end{definition}

\begin{remark}
 \emph{This definition is mildly restrictive in the sense that we may have cases when $\phi$ is not fully irreducible rel $\mathcal{F}$ but some power of 
 $\phi$ is fully irreducible rel $\mathcal{F}$. This arises out of our requirement that $\phi$ leaves each component of $\mathcal{F}$ invariant. This requirement is 
 automatically satisfied by all rotationless elements and all $\text{IA}_n(\mathbb{Z}_3)$ elements used by Handel-Mosher in \cite{HM-13d}}.
\end{remark}

From Handel-Mosher's work on loxodromic elements of the free splitting complex \cite{HM-14b} one defines the concept of a \textit{co-edge} number for a free factor system $\mathcal{F}$ : it is an integer $\geq 1$ which is the minimum 
, over all subgraphs $H$ of a marked graph $G$ such that H realizes $\mathcal{F}$, of the number of edges in $G-H$ . Lemma 4.8 in \cite{HM-14b} gives an explicit formula for computing the co-edge 
number for a given free factor system. The following result is vital to our work here and will help us identify the peripheral subgroups 
in the proof of relative hyperbolicity of mapping tori of outer automorphisms which are fully irreducible relative to a free factor system 
\ref{relhyp2}. The result is present implicitly in \cite{HM-13c}, but for sake of clarity we put it down as a lemma here.

\begin{lemma}\label{firelf}
 Suppose $\mathcal{F}\sqsubset \{[\F]\}$ is a multi-edge (i.e. coedge number $\geq 2$) extension invariant under $\phi$ and every component of $\mathcal{F}$ is also 
 $\phi-$invariant. 
 If $\phi$ is fully irreducible rel $\mathcal{F}$ then there exists $\phi-$invariant dual lamination pair $\Lambda^\pm_\phi$ such that the following hold:
 
 \begin{enumerate}
 \item $\Lambda^\pm_\phi$ fills relative to $\mathcal{F}$.
  \item If $\Lambda^\pm_\phi$ is nongeometric then  $\mathcal{A}_{na}(\Lambda^\pm_\phi)=\mathcal{F}$.
  \item If $\Lambda^\pm_\phi$ is geometric then there exists a root free $\sigma\in\F$ such that 
  $\mathcal{A}_{na}(\Lambda^\pm_\phi) = \mathcal{F}\cup \{[\langle \sigma \rangle]\}$.
 \end{enumerate}

 Conversely, if there exists a $\phi-$invariant dual lamination pair such that if (1) and (2) hold  or if (1) and (3) hold then $\phi$ is fully irreducible rel $\mathcal{F}$.
 
\end{lemma}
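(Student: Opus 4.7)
Invoke the improved relative train track existence lemma above to pick a CT $f : G \to G$ representing $\phi$ with a filtration element $G_{r-1}$ realizing $\mathcal{F}$ (passing to a rotationless iterate if necessary, which does not affect the hypotheses). Let $H_r,\ldots,H_k$ denote the strata above $G_{r-1}$. The key structural reduction is that under our hypotheses there is a unique non-zero stratum above $G_{r-1}$ and it is EG. Indeed, any NEG stratum $H_s$ would make $[\pi_1 G_s]$ a $\phi$-invariant free factor system strictly between $\mathcal{F}$ and $\{[\F]\}$, contradicting full irreducibility rel $\mathcal{F}$; two distinct EG strata would produce an intermediate invariant free factor system at the intermediate filtration level; and the coedge $\geq 2$ hypothesis rules out the remaining pathology of a single NEG edge sitting on top. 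So the top stratum $H_r$ is EG, and the theory of Section \ref{sec:6} produces a $\phi$-invariant attracting lamination $\Lambda^+_\phi \in \mathcal{L}(\phi)$.

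\textbf{Filling and duality.} Applying the same analysis to $\phi^{-1}$ yields $\Lambda^-_\phi \in \mathcal{L}(\phi^{-1})$. Let $\mathcal{F}'$ be the free factor support of the set of lines consisting of all lines carried by $\mathcal{F}$ together with the leaves of $\Lambda^+_\phi$. Then $\mathcal{F}'$ is $\phi$-invariant and strictly contains $\mathcal{F}$ (it also carries $\Lambda^+_\phi$, whose generic leaves have height $r$ and are therefore not carried by $\mathcal{F}$), so full irreducibility rel $\mathcal{F}$ forces $\mathcal{F}' = \{[\F]\}$. This is precisely the filling rel $\mathcal{F}$ assertion of item (1); the identical argument works for $\Lambda^-_\phi$. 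Since $\mathcal{F}_{\mathrm{supp}}(\Lambda^+_\phi) = \mathcal{F}_{\mathrm{supp}}(\Lambda^-_\phi) = \{[\F]\}$, the pair $\Lambda^\pm_\phi$ is dual.

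\textbf{The nonattracting subgroup system.} Any line carried by $\mathcal{F}$ lies entirely in $G_{r-1}$ and so cannot contain an edge of $H_r$; such a line is therefore never weakly attracted to $\Lambda^+_\phi$, which gives $\mathcal{F} \sqsubset \mathcal{A}_{na}(\Lambda^\pm_\phi)$. If $H_r$ is nongeometric, Lemma \ref{NAS}(5) says $\mathcal{A}_{na}(\Lambda^\pm_\phi)$ is itself a free factor system; it is $\phi$-invariant (since $\phi$ fixes $\Lambda^\pm_\phi$) and proper (the bi-recurrent generic leaf of $\Lambda^+_\phi$ is not among its carried lines), so full irreducibility rel $\mathcal{F}$ squeezes it to $\mathcal{F}$, proving (2). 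If $H_r$ is geometric, the CT structure attaches a surface with a single boundary curve $\sigma$ along $G_{r-1}$, and the Handel--Mosher description of $\mathcal{A}_{na}$ shows that it is obtained from $\mathcal{F}$ by adjoining exactly $[\langle \sigma \rangle]$; root-freeness of $\sigma$ is part of the geometric-model description. This proves (3).

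\textbf{Converse and main obstacle.} Conversely, given a dual lamination pair $\Lambda^\pm_\phi$ satisfying (1) together with (2) or (3), suppose toward contradiction that there is a $\phi$-periodic free factor system $\mathcal{F}'$ with $\mathcal{F} \sqsubsetneq \mathcal{F}' \sqsubsetneq \{[\F]\}$. After passing to a suitable power assume $\mathcal{F}'$ is $\phi$-invariant, and pick a conjugacy class $[c]$ carried by $\mathcal{F}'$ but not by $\mathcal{F}$ (and, in the geometric case, not conjugate into $\langle \sigma \rangle$); then $[c]$ is not carried by $\mathcal{A}_{na}(\Lambda^\pm_\phi)$. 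Lemma \ref{WAT} then produces some $\phi^{\pm n}$-iterate of $[c]$ converging weakly to $\Lambda^+_\phi$ or $\Lambda^-_\phi$, and since the set of lines carried by $\mathcal{F}'$ is closed and $\phi$-invariant, $\mathcal{F}'$ must carry $\Lambda^+_\phi$ or $\Lambda^-_\phi$, contradicting the filling hypothesis (1). The main obstacle in this proof is the opening stratum analysis: carefully exploiting the CT structure and the coedge hypothesis to pin down the unique top EG stratum and, in the geometric case, extract the peripheral curve $\sigma$ as a root-free generator from the geometric model.
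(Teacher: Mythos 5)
Your proof takes a genuinely different route from the paper. The paper's proof of Lemma~\ref{firelf} is essentially a citation: it applies Proposition~2.2 of \cite{HM-13d} to obtain conclusions (1)--(3) for some rotationless iterate $\phi^k$, adds a short argument that the resulting lamination pair is in fact $\phi$-invariant (otherwise $\phi(\Lambda^+_\phi)$ would be an attracting lamination carried by $\mathcal{F}$ and hence by $\mathcal{A}_{na}(\Lambda^+_\phi)$, a contradiction), and defers the converse to the case analysis in the proof of Theorem~I of \cite{HM-13d}. You instead try to reprove the content of Proposition~2.2 from scratch via a stratum analysis of the CT above $G_{r-1}$. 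Your converse argument (Weak Attraction Theorem, closedness of the set of lines carried by $\mathcal{F}'$, plus the filling hypothesis) is sound and is, in effect, an unpacking of the citation the paper uses.

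The forward direction, however, has real gaps. The duality claim is not actually established: you show $\mathcal{F}_{supp}(\mathcal{F},\Lambda^+_\phi)=\{[\F]\}$ and $\mathcal{F}_{supp}(\mathcal{F},\Lambda^-_\phi)=\{[\F]\}$, but then conclude from ``$\mathcal{F}_{supp}(\Lambda^+_\phi)=\mathcal{F}_{supp}(\Lambda^-_\phi)=\{[\F]\}$'' that the pair is dual. This does not follow --- the support of a lamination alone may be a proper free factor system even when its support together with $\mathcal{F}$ is everything. Duality needs an argument along the lines of the bijection $\mathcal{L}(\phi)\leftrightarrow\mathcal{L}(\phi^{-1})$ of \cite[Lemma 3.2.4]{BFH-00} combined with the uniqueness of the lamination not carried by $\mathcal{F}$. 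Second, your stratum reduction is asserted rather than proved: it is not automatic that an NEG stratum $H_s$ yields $[\pi_1 G_s]$ strictly between $\mathcal{F}$ and $\{[\F]\}$ (a separating NEG edge leaves the free factor system unchanged), and ruling out several EG strata interleaved with zero strata needs more care; this is precisely the structural content that \cite[Proposition~2.2]{HM-13d} supplies and your sketch is too thin to replace it. Third, in the geometric case the assertions that the attached surface has a single boundary component and that $\mathcal{A}_{na}$ is obtained from $\mathcal{F}$ by adjoining exactly one primitive class both need the multi-edge and full irreducibility hypotheses and a genuine argument. Finally, you pass to a rotationless iterate to build the CT but then produce ``a $\phi$-invariant'' lamination without arguing why $\phi$ (as opposed to $\phi^k$) fixes it; the paper's explicit invariance argument addresses exactly this point, and your proof should too (your uniqueness observation can repair it, but it must be said).
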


\begin{proof}
Let $f: G\to G$ be an improved relative train track map representing $\phi$ and $G_{r-1}$ be the filtration element realizing $\mathcal{F}$.
 Apply \cite[Proposition 2.2]{HM-13d}  to get all the conclusions for some iterate $\phi^k$ of $\phi$. 
 
 We claim that $\Lambda^\pm_\phi$ obtained by applying \cite[Proposition 2.2]{HM-13d} must be $\phi-$invariant.
 Otherwise, by using the definition of ``fully irreducible relative to $\mathcal{F}$'' we conclude that 
 $\phi(\Lambda^+_\phi)$ will be an attracting lamination which is properly contained in $G_{r-1}$ and hence is carried by $\mathcal{F}$ which in turn 
 is carried by the nonattracting subgroup system for $\Lambda^+_\phi$. This is a contradiction, hence $\Lambda^+_\phi$ is $\phi-$invariant. Similar arguments 
 work for $\Lambda^-_\phi$.
 
 The converse part follows from  the case analysis in the proof of \cite[Theorem I, pages 18-19]{HM-13d}

\end{proof}

In view of the above lemma we introduce the following terminologies, originally due to \cite{HM-13c}, that we will be using for the rest of the paper. 
Suppose $\mathcal{F}\sqsubset \{[\F]\}$ is a multi-edge (i.e. coedge number $\geq 2$) extension invariant under $\phi$. 

 \begin{description}
  \item $\Lambda^\pm$ \textbf{\emph{fills relative to}} $\mathcal{F}$ simply means that $\mathcal{F}_{supp}(\mathcal{F}, \Lambda^\pm) = [\F]$. 
  \item We say that $\phi$ is fully irreducible rel $\mathcal{F}$ and \textbf{\emph{nongeometric above $\mathcal{F}$}} if and only if there exists a
  $\phi-$invariant dual lamination pair $\Lambda^\pm_\phi$ such that items (1) and (2) are 
  satisfied in Lemma \ref{firelf}.
  \item We say that $\phi$ is fully irreducible rel $\mathcal{F}$ and \textbf{\emph{geometric above $\mathcal{F}$}} if and only if 
  there exists a $\phi-$invariant dual lamination pair $\Lambda^\pm_\phi$ such that items (1) and (3) are 
  satisfied in Lemma \ref{firelf}.
 \end{description}

 Our objective here is to show that if $\phi$ is fully irreducible relative to a multi-edge extension $\mathcal{F}$ then the mapping torus of $\phi$ is 
 strongly hyperbolic relative to the collection of subgroups that define the nonattracting subgroup system $\mathcal{A}_{na}(\Lambda^\pm_\phi)$. One of 
 our assumptions in the definition of being fully irreducible relative to $\mathcal{F}$ is that $\phi$ fixes every component of $\mathcal{F}$. This is done 
 purely for the simplicity of the proof. If $\phi$ permutes the components of $\mathcal{F}$ then one can pass to a finite power of $\phi$, call it $\phi'$, 
 and assume that $\phi'$ stabilizes each component of $\mathcal{F}$. The strong relative hyperbolicity of mapping torus of $\phi'$ then 
 implies the strong relative hyperbolicity of $\phi$ by using Drutu's theorem \cite{Dr-09}.

 We now state the Relativized version of the pingpong lemma. This lemma is a modification of the pingpong proposition (proposition 4.4) in \cite{self1} and the proof is very  similar.
 The lemma that has been proven by Handel and Mosher in Proposition 1.3 in \cite{HM-13d} is a special case of the following proposition. What they have shown (with slightly weaker conditions) 
 is that the lemma is true for $k=1$ and only under positive 
powers of $\psi$ and $\phi$. Strengthening hypothesis slightly enables us to extend their result to both positive and negative exponents and also for reduced words with arbitrary $k$ (see statement of \ref{pingpong} for description of $k$). 
They also have the assumption that $\phi,\psi$ are both 
rotationless, which they later on discovered, is not necessary; 
one can get away with laminations that are left invariant. The main technique, however, is same.

 However, there are some small changes in the statement and a subtle change in the proof where we need to use 
 a modified version of relative train track maps in \cite{FH-11} to accommodate for $\mathcal{F}$.

 \begin{proposition}
 \label{pingpong}
 Let $\mathcal{F}$ be a proper free factor system that is invariant under $\phi, \psi \in \out$. Let $\Lambda^\pm_\phi, \Lambda^\pm_\psi$ be invariant dual lamination pairs for $\phi, \psi$ respectively.
   Suppose that the laminations $\Lambda^\pm_\phi, \Lambda^\pm_\psi$ each have a generic leaf $\lambda^\pm_\phi, \lambda^\pm_\psi$ which is fixed by 
  $\phi^\pm, \psi^\pm$ respectively, with fixed orientation. Also assume that the following conditions hold:
  
  \begin{itemize}
   \item $\Lambda^\pm_\phi$ is weakly attracted to $\Lambda^\epsilon_\psi$ under iterates of $\psi^\epsilon$ (where $\epsilon = +, -$).
   \item $\Lambda^\pm_\psi$ is weakly attracted to $\Lambda^\epsilon_\phi$ under iterates of $\phi^\epsilon$ (where $\epsilon = +, -$).
   \item $\mathcal{F} \sqsubset \mathcal{A}_{na}(\Lambda^\pm_\phi)$ and $\mathcal{F} \sqsubset \mathcal{A}_{na}(\Lambda^\pm_\psi)$
   \item Either both the lamination pairs $\Lambda^\pm_\psi, \Lambda^\pm_\psi$ are non-geometric or the subgroup $\langle \phi, \psi \rangle$ 
	    is geometric above $\mathcal{F}$
  \end{itemize}

 Then there exist attracting neighborhoods $V^\pm_\phi, V^\pm_\psi$ of $\Lambda^\pm_\phi, \Lambda^\pm_\psi$ respectively, 
 and there exists an integer $M$,  such that for every pair of finite sequences $n_i\geq M$ and $m_i\geq M$ if

 $$\xi= \psi^{\epsilon_1 m_1}\phi^{\epsilon_1 ' n_1}........\psi^{\epsilon_k m_k}\phi^{\epsilon_k ' n_k}$$ 
$(k\geq 1)$ is a cyclically reduced word then $\xi$ will be exponentially-growing and have a $\xi-$invariant dual lamination pair $\Lambda^\pm_\xi$ satisfying the following properties:
  
  \begin{enumerate}
   \item $\Lambda^\pm_\xi$ is non-geometric if $\Lambda^\pm_\phi$ and $\Lambda^\pm_\psi$ are both non-geometric.
   \item $\mathcal{F}$ is carried by $\mathcal{A}_{na}(\Lambda^\pm_\xi)$ and  $\mathcal{A}_{na}(\Lambda^\pm_\xi)$ is caried by  $\mathcal{A}_{na}(\Lambda^\pm_\phi)$ and  
   $\mathcal{A}_{na}(\Lambda^\pm_\psi)$.
    \item $\psi^{m_i}(V^\pm_\phi) \subset V^+_\psi$ and $\psi^{-m_i}(V^\pm_\phi) \subset V^-_\psi$ .
  \item $\phi^{n_j}(V^\pm_\psi) \subset V^-_\phi$ and $\phi^{-n_j}(V^\pm_\psi) \subset V^-_\phi$ .
  \item $V^+_\xi \colon= V^{\epsilon_1}_\psi$ is an attracting neighborhood of $\Lambda^+_\xi$ 
  \item $V^-_\xi \colon= V^{-\epsilon_k '}_\phi$ is an attracting neighborhood of $\Lambda^-_\xi$ 
  \item (uniformity) Suppose $U^{\epsilon_1}_\psi$ is an attracting neighborhood of $\Lambda^{\epsilon_1}_\psi$ then some generic leaf of $\Lambda^+_\xi$ belongs to $U^{\epsilon_1}_\psi$ for 
  sufficiently large $M$.
  \item (uniformity) Suppose $U^{\epsilon_k'}_\phi$ is an attracting neighborhood of $\Lambda^{\epsilon_k'}_\phi$ then some generic leaf of $\Lambda^+_\xi$ belongs to $U^{\epsilon_k'}_\phi$ for 
  sufficiently large $M$.
  \end{enumerate}

 \end{proposition}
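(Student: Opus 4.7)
The plan is to adapt the ping-pong argument of Handel--Mosher \cite{HM-13d} and Proposition~4.4 of \cite{self1} to the present relative setting. The two differences from the previously known cases are (i) the CT representatives used must be chosen with filtration elements realizing $\mathcal{F}$, which is available since $\mathcal{F}$ is invariant under both $\phi$ and $\psi$, and (ii) the dual lamination pair produced must be compatible with $\mathcal{F}$ in the sense of conclusion (2). The heart of the argument is to exhibit a single finite path $\beta$ such that $\xi_{\#\#}(\beta)$ contains three disjoint copies of $\beta$; the exponential growth and the $\xi$-invariant attracting lamination then come from Lemma \ref{expgrowth}.

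\textbf{Setting up the ping-pong.} First I would fix small attracting neighborhoods $V^\pm_\phi, V^\pm_\psi$ and corresponding CT representatives in which $\mathcal{F}$ is realized by a filtration element. Using the mutual weak attraction hypotheses together with the openness of attracting neighborhoods, after shrinking these if needed and choosing $M$ large enough, the inclusions (3) and (4) hold for all $n,m \geq M$. Next I pick a generic leaf segment $\beta \subset \lambda^{\epsilon_1}_\psi$ long enough that $N(G,\beta) \subset V^{\epsilon_1}_\psi$. Reading $\xi$ from right to left and applying the nestings, each successive letter pushes the previous image into the next attracting neighborhood; combined with Lemma \ref{doublesharp} and the fact that high powers of our CTs uniformly stretch long legal leaf segments, each application of $\phi^{\pm n_i}$ or $\psi^{\pm m_i}$ inserts an arbitrarily long sub-arc of the corresponding generic leaf. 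For $M$ large enough, $\xi_{\#\#}(\beta)$ contains three disjoint copies of $\beta$.

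\textbf{Extracting the lamination pair.} Lemma \ref{expgrowth} then produces a $\xi$-invariant attracting lamination $\Lambda^+_\xi$ whose generic leaf is $\xi$-fixed and for which $V^+_\xi := N(G,\beta) \subset V^{\epsilon_1}_\psi$ is an attracting neighborhood, establishing (5) and (7). Running the same argument on the cyclically reduced word $\xi^{-1}$ yields $\Lambda^-_\xi$ with attracting neighborhood $V^-_\xi \subset V^{-\epsilon_k'}_\phi$, giving (6) and (8). Duality of $\Lambda^\pm_\xi$, i.e.\ that they have equal free factor support, follows by combining Lemma \ref{lam_incl} with a direct check that both supports equal the free factor support of $\{\mathcal{F}, \Lambda^+_\phi, \Lambda^+_\psi\}$.

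\textbf{Carrying properties and the main obstacle.} For (2), any conjugacy class carried by $\mathcal{F}$ is carried by both $\mathcal{A}_{na}(\Lambda^\pm_\phi)$ and $\mathcal{A}_{na}(\Lambda^\pm_\psi)$, so is fixed up to conjugacy by both $\phi$ and $\psi$; hence it is not weakly attracted to $\Lambda^+_\xi$ under iterates of $\xi$, and by Lemma \ref{NAS}(2) is carried by $\mathcal{A}_{na}(\Lambda^\pm_\xi)$, giving $\mathcal{F} \sqsubset \mathcal{A}_{na}(\Lambda^\pm_\xi)$. Conversely, take a line $\gamma$ carried by $\mathcal{A}_{na}(\Lambda^\pm_\xi)$; if $\gamma$ were not carried by $\mathcal{A}_{na}(\Lambda^\pm_\phi)$, the weak attraction theorem (Lemma \ref{WAT}, or Corollary \ref{WATgeo} in the geometric case) would place some iterate $\phi^{\pm n}(\gamma)$ into $V^\pm_\phi$, and the ping-pong machine above would then force $\gamma$ to be attracted to $\Lambda^+_\xi$, contradicting $\gamma \in \mathcal{A}_{na}(\Lambda^+_\xi)$. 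For (1), if both input pairs are non-geometric then by Lemma \ref{NAS}(5) both $\mathcal{A}_{na}(\Lambda^\pm_\phi)$ and $\mathcal{A}_{na}(\Lambda^\pm_\psi)$ are free factor systems; since $\mathcal{A}_{na}(\Lambda^\pm_\xi)$ is carried by each of them, it is itself a free factor system, forcing $\Lambda^\pm_\xi$ non-geometric. The subtlest step is the converse carrying argument: the uniformity conclusions (7) and (8) are precisely what is needed in order to run the ping-pong attraction argument on an arbitrary line $\gamma$ rather than just on the original seed $\beta$, and the geometric-above-$\mathcal{F}$ hypothesis ensures that Corollary \ref{WATgeo} applies uniformly to all such $\gamma$ in that case.
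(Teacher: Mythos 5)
Your high-level ping-pong strategy and the derivations of (3)--(8) track the paper's argument (which proceeds through careful choices of generic leaf segments $\alpha^\epsilon_1, \beta^\epsilon_0, \gamma^\epsilon_1$ and bounded cancellation constants, then uses Lemma \ref{doublesharp} and Lemma \ref{expgrowth} to exhibit three disjoint copies). However, your arguments for conclusions (1) and (2) contain genuine gaps.

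\textbf{Conclusion (1).} You claim that since $\mathcal{A}_{na}(\Lambda^\pm_\xi)$ is carried by the free factor system $\mathcal{A}_{na}(\Lambda^\pm_\phi)$, it must itself be a free factor system. This inference is false: a subgroup system can be carried by (i.e.\ have each component conjugate into a member of) a free factor system without being a free factor system. In the geometric case $\mathcal{A}_{na}(\Lambda^\pm_\xi)$ would be $\mathcal{F}'' \cup \{[\langle\tau\rangle]\}$ where $\tau$ need not be primitive relative to the ambient free factor decomposition; the containment inside $\mathcal{A}_{na}(\Lambda^\pm_\phi)$ gives no contradiction. The paper instead invokes Proposition~2.18 of \cite{HM-13a}: if $\Lambda^\pm_\xi$ were geometric there would be a finite $\xi$-fixed set of conjugacy classes whose free factor support carries $\Lambda^+_\xi$; those conjugacy classes lie in $\mathcal{A}_{na}(\Lambda^\pm_\xi)$, hence in $\mathcal{A}_{na}(\Lambda^\pm_\psi)$, yet a generic leaf of $\Lambda^+_\xi$ contains the subpath $\beta$ and so cannot be carried by $\mathcal{A}_{na}(\Lambda^\pm_\psi)$ --- the actual contradiction.

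\textbf{Conclusion (2), converse inclusion.} You assume that a line $\gamma$ carried by $\mathcal{A}_{na}(\Lambda^\pm_\xi)$ cannot be weakly attracted to $\Lambda^+_\xi$, and derive a contradiction from that. But Lemma \ref{NAS}(2) gives the ``carried iff unattracted'' equivalence only for \emph{conjugacy classes}, not for arbitrary lines, so your contradiction does not follow directly. The paper's route avoids this: it first shows that any line \emph{not attracted} to $\Lambda^+_\xi$ or $\Lambda^-_\xi$ lies outside $V^\pm_\xi$ and hence is carried by both $\mathcal{A}_{na}(\Lambda^\pm_\phi)$ and $\mathcal{A}_{na}(\Lambda^\pm_\psi)$; it then specialises to conjugacy classes (where the iff holds) and finally uses the fact that every line carried by $\mathcal{A}_{na}(\Lambda^\pm_\xi)$ is a weak limit of conjugacy classes carried by $\mathcal{A}_{na}(\Lambda^\pm_\xi)$, together with closedness of the set of lines carried by a subgroup system (Lemma \ref{NAS}(1)), to conclude. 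You need this conjugacy-class approximation and closedness step; without it the argument is circular.

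A minor remark: the assertion that duality of $\Lambda^\pm_\xi$ ``follows by combining Lemma~\ref{lam_incl} with a direct check'' is essentially a citation of the duality argument from \cite{HM-13d} Proposition~1.3 (which the paper also defers to); that is acceptable as a sketch, but note that Lemma~\ref{lam_incl} alone does not establish equality of free factor supports.
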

 
 \textbf{Notation: }
 \begin{itemize}
  \item For convenience, we shall use $\Lambda_0^\epsilon$ to denote attracting ($\epsilon=+$) or repelling ($\epsilon=-$) 
lamination of $\phi$  and $\Lambda_1^\epsilon$ to denote attracting or repelling lamination of $\psi$ which are given in the hypothesis.
 \item Also for easier book-keeping for our relative train-track maps and homotopy equivalences we use the notation $\mu_i=(i,\epsilon)$ (where $i\in\{0, 1\}$ and 
 $\epsilon\in\{+, -\}$) in the following way:
 
 $g_{\mu_i}:G_{\mu_i}\longrightarrow G_{\mu_i}$ denotes the improved relative train-track map for $\phi$ if $\mu_i=(0,+)$. At first glance it may seem like a strange choice 
 but there are a lot of superscripts and subscripts which appear in the proof and this choice helps to keep the proof as clean as 
 possible. To avoid any confusion we have made sure to clearly mention what $\mu_i$ is during every use.
 
 \end{itemize}

\begin{proof}

Let $g_{\mu_i}:G_{\mu_i}\longrightarrow G_{\mu_i}$ be improved relative train train-track maps and  $u^{\mu_i}_{\mu_j}:G_{\mu_i}\longrightarrow G_{\mu_j}$ be the homotopy equivalence between the graphs which preserve the markings, 
where $i\neq j$ (where $i\in\{0, 1\}$ and $\epsilon\in\{+, -\}$).
Also suppose (by Theorem 5.1.5, \cite{BFH-00} ) that $\mathcal{F}$ is realized by some filtration element.

Let $C_1> 2\text{BCC}\{g_{\mu_i}|i \in\{0,1\}\}$. Let $C_2> \text{BCC}\{u^{\mu_i}_{\mu_j}| i,j\in\{0,1\},i\neq j\}$. Let
 $C\geq C_1,C_2$.

Now we work with  $\lambda_i^{\epsilon}$ as generic leaves of laminations $\Lambda_i^{\epsilon}$.
\paragraph{Step 1:}
Using the fact that $\Lambda_1^{\epsilon}$ is weakly attracted to $\Lambda_0^{\epsilon '}$, under the action if $\psi^{\epsilon '}$, choose a finite subpath $\alpha_1^{\epsilon}\subset\lambda^{\epsilon}_1$ such that 
\begin{itemize}
 \item $(u_{\mu_0}^{\mu_1})_{\#}(\alpha_1^{\epsilon})\rightarrow \lambda^{\epsilon '}_0$ weakly, where $\mu_0=(0,\epsilon ')$ and $\mu_1=(1,\epsilon)$.
\item $\alpha_1^{\epsilon}$ can be broken into three segments: initial segment of $C$ edges, followed by a subpath $\alpha^{\epsilon}$ followed by a terminal segment with $C$ edges.
\end{itemize}

\paragraph{Step 2:}
 Now using the fact $\Lambda^{\epsilon}_0\rightarrow \Lambda^{\epsilon '}_1$ weakly, under iterations of $\phi^{\epsilon '}$, we can find positive integers $p^{\epsilon}_{\mu_1}$ (there are four choices here that will yield four integers) such that 
$\alpha^{\epsilon'}_1 \subset (g^{p^{\epsilon}_{\mu_1}}_{\mu_1} u_{\mu_1}^{\mu_0})_\#(\lambda^{\epsilon}_0)$ , where $\mu_0=(0,\epsilon) , \mu_1=(1,\epsilon')$. \\
Let $C_3$ be greater than BCC$\{g^{p^{\epsilon}_{\mu_1}}_{\mu_1} u_{\mu_1}^{\mu_0}\} $ (four maps for four integers $p^{\epsilon}_{\mu_1}$) .\\

\paragraph{Step 3:}
Next, let $\beta^{\epsilon}_0\subset \lambda^{\epsilon}_0$ be a finite subpath such that $(g^{p^{\epsilon}_{\mu_1}}_{\mu_1})_\#(\beta^{\epsilon}_0)$ contains $\alpha^{\epsilon '}_1$ protected by $C_3$ edges in both sides, where $\mu_0=(0,\epsilon)$ and $\mu_1=(1,\epsilon ')$.
Also, by increasing $\beta^{\epsilon}_0$ if necessary, we can assume that $V^{\epsilon}_\psi = N(G_{\mu_0},\beta^{\epsilon}_0)$ is an attracting neighborhood of $\Lambda^{\epsilon}_0$ .

Let $\sigma$ be any path containing $\beta^{\epsilon}_0$. Then $(g^{p^{\epsilon}_{\mu_1}}_{\mu_1} u_{\mu_1}^{\mu_0})_\#(\sigma)\supset \alpha^{\epsilon}_0$. Thus by using Lemma \ref{doublesharp} we get that  $(g^{p^{\epsilon}_{\mu_1}+t}_{\mu_1} u_{\mu_0}^{\mu_1})_\#(\sigma)=(g^t_{\mu_1})_\#((g^{p^{\epsilon}_{\mu_1}}_{\mu_1} u_{\mu_1}^{\mu_0})_\#(\sigma))$ contains $(g^t_{\mu_1})_\#(\alpha^{\epsilon})$ for all $t\geq 0$ .\\
Thus we have $(g^{p^{\epsilon}_{\mu_1}+t}_{\mu_1} u_{\mu_1}^{\mu_0})_{\#\#}(\beta^{\epsilon}_0)\supset (g^t_{\mu_1})_\#(\alpha^{\epsilon})$ for all $t\geq0$.

This proves conclusion (3)

\paragraph{Step 4:}
Next step is reverse the roles of $\phi$ and $\psi$ to obtain positive integers $q^{\epsilon'}_{\mu_1}$ and paths $\gamma^{\epsilon'}_1\subset\lambda^{\epsilon'}_{\mu_1}$ such that $(g^{q^{\epsilon}_{\mu_0}+t}_{\mu_0} u_{\mu_0}^{\mu_1})_{\#\#}(\gamma^{\epsilon'}_1)\supset (g^t_{\mu_0})_\#(\beta^{\epsilon}_0)$ for all $t\geq0$ 
, where $\mu_0=(0,\epsilon), \mu_1= (1,\epsilon')$ .

This proves conclusion (4)

\paragraph{Step 5:}
Finally, let $k$ be such that $(g^k_{\mu_1})_{\#}(\alpha^{\epsilon})$ contains three disjoint copies of $\gamma^{\epsilon}_1$ and that $(g^{k}_{\mu_0})_{\#}(\beta^{\epsilon}_0)$ contains three 
disjoint copies of $\beta^{\epsilon}_0$ for $\epsilon = 0,1$. Let $p\geq$ max $\{ p^{\epsilon}_{\mu_1} \}$  + $k$ and $q\geq$ max $\{q^{\epsilon}_{\mu_0} \}$  + $k$.

Let $m_i\geq q$ and $n_i\geq p$.

The map $f_\xi = g^{m_1}_{(0,\epsilon_1)}u^{(1,\epsilon_1')}_{(0,\epsilon_1)}g^{n_1}_{(1,\epsilon_1')}u^{(0,\epsilon_2)}_{(1,\epsilon_1')}.........g^{n_k}_{(1,\epsilon_k ')}u^{(0,\epsilon_1)}_{(1,\epsilon_k ')}:G_{(0,\epsilon_1)}\rightarrow G_{(0,\epsilon_1)}$ is a topological representative of $\xi$.
With the choices we have made,  $g^{n_k}_{(1,\epsilon_k')}u^{(0,\epsilon_1)}_{(1,\epsilon_k')})_{\#\#}(\beta^{\epsilon_1}_0)$ contains three disjoint copies of $\gamma_1^{\epsilon_k'}$ and so 
$(g^{m_k}_{(0,\epsilon_k)}u^{(1,\epsilon_k')}_{(0,\epsilon_k)}g^{n_k}_{(1,\epsilon_k')}u^{(0,\epsilon_1)}_{(1,\epsilon_k')})_{\#\#}(\beta^{\epsilon_1}_0) $ will contain three disjoint copies of $\beta_0^{\epsilon_k}$. 
Continuing in this fashion in the end we get that $(f_\xi)_{\#\#}(\beta_0^{\epsilon_1})$ contains three disjoint copies of $\beta_0^{\epsilon_1}$. Thus by Lemma \ref{expgrowth} $\xi$ is an exponentially growing element of $\out$ with an attracting lamination $\Lambda^+_\xi$ which has 
$V^+_\xi =  N(G_{\mu_0}, \beta^{\epsilon_1}_0) = V^{\epsilon_1}_\psi$ as an attracting neighborhood. This proves conclusion (5).

Similarly, if we take inverse of $\xi$ and interchange the roles played by $\psi, \phi$ with $\phi^{-1},\psi^{-1}$, we can produce an attracting lamination $\Lambda^-_{\xi}$ for $\xi^{-1}$ with an attracting neighborhood $V^-_\xi = N(G_{(1,-\epsilon_k')},\gamma_1^{-\epsilon_k'})=V^{-\epsilon_k'}_\phi$, 
 which proves  property (5) and (6) of the proposition.
The proof in  Proposition 1.3 \cite{HM-13d} that $\Lambda^-_\xi$ and $\Lambda^+_\xi$ are dual lamination pairs will carry over in this situation and so $\mathcal{A}_{na}\Lambda^+_\xi=\mathcal{A}_{na}\Lambda^-_\xi$ .

Hence, every reduced word of the group $\langle\phi^n,\psi^m\rangle$ will be exponentially growing if $n\geq p,m\geq q$. Let $M\geq p,q$. 

Now, we prove the conclusion (1) and (2) related to non-attracting subgroup system $\mathcal{A}_{na}(\Lambda^\pm_\xi)$.
By corollary \ref{WATgeo}  there exists $l$ so that if $\tau$ is neither an element of $V^{-\epsilon_k'}_\phi = V^-_\xi$ nor is it carried by $\mathcal{A}_{na}\Lambda^\pm_\phi$ then $\phi^{\epsilon_k't}_\#(\tau)\in V^{\epsilon_k'}_\phi$ for all $t\geq l$ . Increase $M$
 if necessary so that $M>l$. Under this assumption, 
$\xi_\#(\tau)\in \psi^{\epsilon_1m_1}(V^{\epsilon_1'}_\phi)\subset V^+_\xi$. So $\tau$ is weakly attracted to $\Lambda^+_\xi$. Hence we conclude that if $\tau\notin V^-_\xi$ and not attracted to $\Lambda_\xi^+$, then $\tau$ is  carried by $\mathcal{A}_{na}\Lambda^\pm_\phi$.
Similarly, if $\tau$ is not in $V^+_\xi$ and not attracted to $\Lambda^-_\xi$ then $\tau$ is carried by $\mathcal{A}_{na}\Lambda^\pm_\psi$. 

For the remainder of the proof, for notational simplicity, assume that $\xi$ begins with a positive power of $\psi$ and ends with a positive power of $\phi$. 

Next, suppose that $\tau$ is a line that is not attracted to any of $\Lambda^+_\xi, \Lambda^-_\xi$. Then $\tau$ must be disjoint from $V^+_\xi, V^-_\xi$. So, is carried by both $\mathcal{A}_{na}\Lambda^\pm_\psi$ and $\mathcal{A}_{na}\Lambda^\pm_\phi$.
Restricting our attention to periodic line, we can say that every conjugacy class that is carried by both $\mathcal{A}_{na}\Lambda^+_\xi$ and $\mathcal{A}_{na}\Lambda^-_\xi$ is carried by both $\mathcal{A}_{na}\Lambda^\pm_\psi$ and $\mathcal{A}_{na}\Lambda^\pm_\phi$. 
Since every line carried by $\mathcal{A}_{na}(\Lambda^\pm_\xi)$ is a limit of conjugacy classes which are carried by 
$\mathcal{A}_{na}(\Lambda^\pm_\xi)$, we can conclude that every line carried by $\mathcal{A}_{na}(\Lambda^\pm_\xi)$ is carried by both 
$\mathcal{A}_{na}(\Lambda^\pm_\phi)$ and $\mathcal{A}_{na}(\Lambda^\pm\psi)$. Since a generic leaf of $\Lambda^+_\psi$ realized in $G_\psi$ contains the 
finite path $\beta$ which begins and ends with edges contained in $H_\psi$, and since $\mathcal{F}\sqsubset\mathcal{A}_{na}(\Lambda^\pm_\psi)$
the filtration element of $G_\psi$ corresponding to $\mathcal{F}$ is below the stratum $H_\psi$. This implies that a generic leaf of 
$\Lambda^+_\xi$ is not carried by $\mathcal{F}$. Also note that since $\xi$ fixes $\mathcal{F}$, the limit of any conjugacy 
class under iterates of $\xi$ is contained in $\mathcal{F}$ if the conjugacy class is itself carried by $\mathcal{F}$. This implies that 
$\mathcal{F}$ does not carry $\Lambda^+_\xi$. Hence no conjugacy class carried by $\mathcal{F}$ is attracted to 
$\Lambda^+_\xi$ under iterates of $\xi$. Hence $\mathcal{F}\sqsubset\mathcal{A}_{na}(\Lambda^\pm_\xi)$. This proves (2).

It remains to show that if $\Lambda^\pm_\psi$ and $\Lambda^\pm_\psi$ are nongeometric then $\Lambda^\xi$ is also nongeometric. 
Suppose on the contrary that $\Lambda^\pm_\xi$ is geometric. Then by using Proposition 2.18 from \cite{HM-13a} we can conclude that there 
exists a finite set of conjugacy classes, which is fixed by $\xi$, and the free factor support of this set of conjugacy class carries the lamination $\Lambda^+_\xi$.
But by using (2), all $\xi-$invariant conjugacy classes are carried by $\mathcal{A}_{na}(\Lambda^\pm_\xi)$ and hence carried by both 
$\mathcal{A}_{na}(\Lambda^\pm_\phi)$ and $\mathcal{A}_{na}(\Lambda^\pm_\psi)$. This implies that $\Lambda^\pm_\xi$ 
is carried by $\mathcal{A}_{na}(\Lambda^+_\psi)$ and $\mathcal{A}_{na}(\Lambda^\pm_\phi)$. But the realization of a generic leaf of 
$\Lambda^+_\xi$ in $G_\psi$ contains the subpath $\beta$ and hence cannot be carried by $\mathcal{A}_{na}(\Lambda^\pm_\psi)$
, a contradiction. This completes the proof of (1).

\end{proof}

\begin{remark}
\begin{enumerate}
 \item Notice that if $\phi$ and $\psi$ are both hyperbolic outer automorphisms then, 
 $\Lambda^\pm_\psi$ and $\mathcal{A}_{na}(\Lambda^\pm)_\phi$ are both nongeometric and we always satisfy the fourth condition 
 in our list of assumptions (the presented under the bullets). Thus the above proposition is true for hyperbolic $\phi$ and $\psi$ 
 without the assumption in fourth bullet. 
 \item Any 
 $\xi-$invariant conjugacy class must be carried by $\mathcal{A}_{na}(\Lambda^\pm_\xi)$ and hence carried by both 
 $\mathcal{A}_{na}(\Lambda^\pm_\phi)$ and $\mathcal{A}_{na}(\Lambda^\pm_\psi)$. Hence, if we assume that these two subgroup 
 systems are mutually malnormal relative to $\mathcal{F}$, we can conclude that every $\xi-$periodic conjugacy class is 
 carried by $\mathcal{F}$. This will be very useful when we deduce fully-irreducibility for $\xi$ in the next theorem.

 \end{enumerate}
 
\end{remark}

\begin{definition} \label{relind}

 Let $\phi, \psi \in \out$ be exponentially growing outer automorphisms with invariant lamination pairs $\Lambda^\pm_\phi, \Lambda^\pm_\psi$ and let $\mathcal{F}$ be a free factor system 
 which is left invariant by both $\phi, \psi$. Suppose the following conditions hold:
 \begin{enumerate}
  \item None of the lamination pairs $\Lambda^\pm_\phi, \Lambda^\pm_\psi$ are carried by $\mathcal{F}$.
  \item $\{\Lambda^\pm_\phi\} \cup \{\Lambda^\pm_\psi\}$ fill relative to $\mathcal{F}$.
  \item $\Lambda^\pm_\phi$ is weakly attracted to $\Lambda^\epsilon_\psi$ under iterates of $\psi^\epsilon$ (where $\epsilon = +, -$).
  \item $\Lambda^\pm_\psi$ is weakly attracted to $\Lambda^\epsilon_\phi$ under iterates of $\phi^\epsilon$ (where $\epsilon = +, -$).
  \item $\mathcal{A}_{na}(\Lambda^\pm_\phi)$ and $\mathcal{A}_{na}(\Lambda^\pm_\psi)$ are mutually malnormal relative to $\mathcal{F}$.
  \item Either both the lamination pairs $\Lambda^\pm_\psi, \Lambda^\pm_\psi$ are non-geometric or the subgroup $\langle \phi, \psi \rangle$ 
	    is geometric above $\mathcal{F}$
 \end{enumerate}

 In this case we define the pair $(\phi, \Lambda^\pm_\phi) , (\psi, \Lambda^\pm_\psi)$ to be independent \textit{relative to $\mathcal{F}$}.
\end{definition}

\textbf{Remark:} Here the term \textit{mutually malnormal relative to $\mathcal{F}$} in condition 5 above, is by definition the property that a line or a 
conjugacy class is carried by both 
$\mathcal{A}_{na}(\Lambda^\pm_\phi)$ and $\mathcal{A}_{na}(\Lambda^\pm_\psi)$ if and only if it is carried by $\mathcal{F}$.

 \begin{thma}
  Given a free factor system $\mathcal{F}$ with co-edge number $\geq 2$, given $\phi, \psi \in \out$ each preserving $\mathcal{F}$ and its components individually, and given invariant lamination pairs 
  $\Lambda^\pm_\phi, \Lambda^\pm_\psi$, so that the pair $(\phi, \Lambda^\pm_\phi), (\psi, \Lambda^\pm_\psi)$ is independent relative to $\mathcal{F}$, then there $\exists$  $M\geq 1$, 
  such that for any integer $m,n \geq M$, the group $\langle \phi^m, \psi^n \rangle$ is a free group of rank 2, all of whose non-trivial elements except perhaps the powers of $\phi, \psi$ 
  and their conjugates, are fully irreducible relative to $\mathcal{F}$ with a lamination pair which fills relative to $\mathcal{F}$. 
  
  In addition if both $\Lambda^\pm_\phi, \Lambda^\pm_\psi$ are non-geometric 
  then this lamination pair is also non-geometric.
 \end{thma}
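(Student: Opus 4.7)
The strategy is to apply the relativized ping-pong Proposition~\ref{pingpong} to produce, for each cyclically reduced word in $\phi^m,\psi^n$, an exponentially growing element with an invariant dual lamination pair, and then upgrade ``exponentially growing'' to ``fully irreducible rel $\mathcal{F}$ with a filling lamination pair'' by a Stallings-graph argument in the spirit of \cite{HM-13c}. Conditions (3)--(6) of Definition~\ref{relind} are precisely the hypotheses of Proposition~\ref{pingpong}; applying it yields an integer $M$, attracting neighborhoods $V^\pm_\phi,V^\pm_\psi$, and for every $m,n\geq M$ and every cyclically reduced word $\xi=\psi^{\epsilon_1 m_1}\phi^{\epsilon_1' n_1}\cdots\psi^{\epsilon_k m_k}\phi^{\epsilon_k' n_k}$ with $m_i,n_i\geq M$, a $\xi$-invariant dual lamination pair $\Lambda^\pm_\xi$ satisfying $\mathcal{F}\sqsubset\mathcal{A}_{na}(\Lambda^\pm_\xi)$, the subgroup system $\mathcal{A}_{na}(\Lambda^\pm_\xi)$ being carried by both $\mathcal{A}_{na}(\Lambda^\pm_\phi)$ and $\mathcal{A}_{na}(\Lambda^\pm_\psi)$, and $\Lambda^\pm_\xi$ nongeometric whenever both $\Lambda^\pm_\phi,\Lambda^\pm_\psi$ are.

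The disjointness of $V^+_\phi,V^-_\phi,V^+_\psi,V^-_\psi$ (for $M$ sufficiently large) combined with conclusions (3), (4) of Proposition~\ref{pingpong} yields the usual ping-pong action, whence $\langle\phi^m,\psi^n\rangle$ is free of rank $2$. The second bullet of the remark after Proposition~\ref{pingpong}, together with the mutual malnormality hypothesis rel $\mathcal{F}$ (condition (5) of Definition~\ref{relind}), forces every $\xi$-periodic conjugacy class, being carried by both $\mathcal{A}_{na}(\Lambda^\pm_\phi)$ and $\mathcal{A}_{na}(\Lambda^\pm_\psi)$, to be carried by $\mathcal{F}$. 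I then aim to show $\xi$ is fully irreducible rel $\mathcal{F}$. Since $\phi,\psi$ fix each component of $\mathcal{F}$, so does $\xi$; suppose for contradiction there is a $\xi$-periodic free factor system $\mathcal{F}'$ with $\mathcal{F}\sqsubsetneq\mathcal{F}'\sqsubsetneq[\F]$, and after passing to a power of $\xi$ take $\mathcal{F}'$ to be $\xi$-invariant. By construction (Steps 1--5 in the proof of Proposition~\ref{pingpong}), a generic leaf of $\Lambda^+_\xi$ contains arbitrarily long subpaths of generic leaves of $\Lambda^+_\phi$ and $\Lambda^+_\psi$; if $\mathcal{F}'$ carries $\Lambda^+_\xi$ then closedness of the set of carried lines in the weak topology forces $\mathcal{F}'$ to carry $\Lambda^\pm_\phi$ and $\Lambda^\pm_\psi$ as well, whence $[\F]=\mathcal{F}_{supp}(\mathcal{F}\cup\Lambda^\pm_\phi\cup\Lambda^\pm_\psi)\sqsubset\mathcal{F}'$ by condition (2) of Definition~\ref{relind}, contradicting properness. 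In the alternative case, pick a CT representing $\xi$ with filtration elements realizing $\mathcal{F}$ and $\mathcal{F}'$; the intermediate strata must carry conjugacy classes not supported on $\mathcal{F}$, yet the $\xi$-periodic content of any such conjugacy class was just shown to lie in $\mathcal{F}$, giving the needed contradiction.

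The filling conclusion $\mathcal{F}_{supp}(\mathcal{F}\cup\Lambda^\pm_\xi)=[\F]$ is obtained by the same support computation as above with $\mathcal{F}'$ replaced by $\mathcal{F}$ (no properness required), and nongeometricity of $\Lambda^\pm_\xi$ in the purely nongeometric case is already part of Proposition~\ref{pingpong}. The principal obstacle is the fully-irreducible step: turning the qualitative ``generic leaves of $\Lambda^\pm_\xi$ contain long building-block subpaths'' statement into a rigorous weak-closure conclusion strong enough to pin down the free factor support, and ruling out the case where $\mathcal{F}'$ does not carry $\Lambda^+_\xi$, require the Stallings-graph machinery of \cite{HM-13c} transplanted to the relative setting and careful use of the CT structure for $\xi$.
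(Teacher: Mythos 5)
Your overall plan (relativized ping-pong to produce $\Lambda^\pm_\xi$, then a Stallings-graph argument to upgrade to full irreducibility rel $\mathcal{F}$) matches the paper. You also correctly flag the filling/full-irreducible step as the principal obstacle. But the sketch you give of that step has a genuine gap. You argue per fixed $\xi$: suppose a proper $\xi$-invariant $\mathcal{F}'\supsetneq\mathcal{F}$ exists, then claim that a generic leaf of $\Lambda^+_\xi$ contains \emph{arbitrarily long} subpaths of generic leaves of $\Lambda^+_\phi$ and $\Lambda^+_\psi$, so carrying $\Lambda^+_\xi$ forces $\mathcal{F}'$ to carry $\Lambda^\pm_\phi,\Lambda^\pm_\psi$. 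That claim is not available for a fixed $\xi$. The ping-pong construction only guarantees that generic leaves of $\Lambda^+_\xi$ contain the fixed finite path $\beta_0^{\epsilon_1}$ (and its $f_\xi$-iterates, which are long but are not subpaths of generic leaves of $\Lambda^\pm_\phi$). Arbitrarily long $\Lambda^+_\psi$- or $\Lambda^-_\phi$-leaf segments are obtained only via the uniformity conclusions (7), (8) of Proposition \ref{pingpong}, which are statements \emph{as $M\to\infty$}. This is precisely why the paper's proof is a limiting contradiction argument: assume a sequence $\xi_M$ of reducible words with all exponents $\geq M$, let $\mathcal{F}_M=\mathcal{F}_{supp}(\mathcal{F},\Lambda^\pm_{\xi_M})$ be the proper reducing system, form the Stallings graph $K_M$ of the supporting free factor $F_M$, and show via edgelet-label stabilization plus the uniformity conclusions that as $M\to\infty$ the graphs $K_M$ are forced to carry $\Lambda^+_\psi$ and $\Lambda^-_\phi$, violating condition (1) of Definition \ref{relind}.

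Your ``alternative case'' (when $\mathcal{F}'$ does not carry $\Lambda^+_\xi$) is also not the right scaffold; the paper sidesteps it entirely by using the converse of Lemma \ref{firelf}. The ping-pong conclusion (2) together with mutual malnormality rel $\mathcal{F}$ forces $\mathcal{A}_{na}(\Lambda^\pm_{\xi_M})=\mathcal{F}$, hence $\Lambda^\pm_{\xi_M}$ is nongeometric; so by that converse, if $\xi_M$ is \emph{not} fully irreducible rel $\mathcal{F}$ the only thing that can fail is the filling condition. Thus $\mathcal{F}_M\sqsubsetneq\{[\F]\}$ is automatic and the Stallings argument is the whole proof, with no case split on whether $\mathcal{F}'$ carries the lamination. (Also, the intermediate-strata observation you invoke in your alternative case — that ``the $\xi$-periodic content of any such conjugacy class lies in $\mathcal{F}$'' — does not yield a contradiction, since a conjugacy class carried by a non-top stratum need not be $\xi$-periodic.)
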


 \begin{proof}
  The conclusion about the rank 2 free group follows easily from the Tit's Alternative work of Bestvina-Feighn-handel \cite{BFH-00} Lemma 3.4.2, which gives us some integer $M_0$ such that 
  for every $m,n \geq M_0$ the group $\langle \phi^m, \psi^n \rangle$ is a free group of rank 2.
  
  For the conclusion about being fully irreducible relative to $\mathcal{F}$, suppose that the conclusion is false.

	$\Longrightarrow$ For every $M \geq M_0$, there exists some $m(M), n(M)$ such that the group $\langle \phi^m, \psi^n \rangle$ contains at least one 
	non trivial reduced word $\xi_M$ which is not the powers of generators themselves or their conjugates, and $\xi_M$ is not fully irreducible relative to $\mathcal{F}$.

	Next, using the conclusions from our relativized pingpong lemma earlier in this section and by increasing $M_0$ if necessary,
	we can conclude that $\mathcal{F}$ is carried by $\mathcal{A}_{na}(\Lambda^\pm_{\xi_M})$ and  $\mathcal{A}_{na}(\Lambda^\pm_{\xi_M})$ is caried by  $\mathcal{A}_{na}(\Lambda^\pm_\phi)$ and  
   $\mathcal{A}_{na}(\Lambda^\pm_\psi)$.
   
	$\Longrightarrow$ $\mathcal{A}_{na}(\Lambda^\pm_{\xi_M}) = \mathcal{F}$.
	
	$\Longrightarrow$ $\Lambda^\pm_{\xi_M}$ is non-geometric.
	
	Also the additional co-edge $\geq 2$ condition tells us that $\Lambda^\pm_{\xi_M}$ cannot fill relative to $\mathcal{F}$. This means that the 
	free factor system $\mathcal{F}_M := \mathcal{F}_{supp}(\mathcal{F}, \Lambda^\pm_{\xi_M})$ is a proper free factor system and that $\mathcal{F}_M\sqsubset \mathcal{F}$ is a 
	proper containment for all sufficiently large $M$.

 We also make an assumption that this $\xi_M$ begins with a nonzero power of $\psi$ and ends in some nonzero power of $\phi$; if not, then we can conjugate to achieve this.
Thus as $M$ increases, we have a sequence of reducible elements $\xi_M\in \out$. Pass to a subsequence to assume that the $\xi_M$'s begin with a positive power of $\psi$ and end with a positive power of $\phi$. If no such subsequence exist, 
then change the generating set of by replacing generators with their inverses.

Let $\xi_M=\psi^{m_1}\phi^{\epsilon'_1 n_1}........\psi^{\epsilon_k m_k}\phi^{n_k}$ \hspace{2pc}where $m_i=m_i(M),n_j=n_j(M)$ and $k$ depend on $M$.

We note that by our assumptions, the exponents get larger as $M$ increases and in this setting we can draw a conclusion about weak attracting neighborhoods of the laminations $\lambda^\pm_{\xi_M}$. 
From the Ping-Pong lemma we know that there exists attracting neighborhoods $V^\pm_\psi$  and $V^\pm_\phi$ for  the dual lamination pairs $\Lambda^\pm_\psi$ and $\Lambda^\pm_\phi$ , respectively,  such that if $i\neq 1$ $$\psi^{{\epsilon_i m_i}(M)}(V^\pm_\phi)\subset V^{\epsilon_i}_\psi \textrm{ and } \psi^{m_1(M)}(V^\pm_\phi)\subset V^+_\psi\subset V^+_{\xi_M}$$
where each of $\xi_M$'s are exponentially-growing and equipped with a lamination pair $\Lambda^\pm_{\xi_M}$ (with attracting neighborhoods $V^\pm_{\xi_M}$) such that $\mathcal{A}_{na}\Lambda^\pm_{\xi_M}$ is trivial (using conclusion 1 of proposition \ref{pingpong} and bullet 6 in the hypothesis set).
	

	Now we proceed following the key idea of proof of Theorem I in the non-geometric case as in section 2.4 of \cite{HM-13d}. The idea is to use 
	stallings graph to drive up $\mathcal{F}_M $ and arrive at a contradiction similar to the proof Theorem 5.7 in \cite{self1}.
	This is achieved in our proof by showing that if $M$ is sufficiently large then, we have $\mathcal{F}_\phi , \mathcal{F}_\psi \sqsubset \mathcal{F}_M$ and so $\mathcal{F}_{supp}(\mathcal{F}_\phi, \mathcal{F}_\psi)\sqsubset \mathcal{F}_M$
	(where $\mathcal{F}_\phi := \mathcal{F}_{supp}(\mathcal{F}, \Lambda^\pm_\phi)$ and $\mathcal{F}_\psi := \mathcal{F}_{supp}(\mathcal{F}, \Lambda^\pm_\psi)$). 
		This will imply that $\mathcal{F}_{supp}(\mathcal{F}_\phi, \mathcal{F}_\psi)\sqsubset \mathcal{F}$ is a proper containment, which will contradict the condition that $\{\Lambda^\pm_\phi\} \cup \{\Lambda^\pm_\psi\}$ fill relative to $\mathcal{F}$.
	
	To proceed with the proof, fix a marked metric graph $H$ with a core subgraph $H_0$ realizing $\mathcal{F}$.
	For each $M$ , let $[F_M]$ denote the component of $\mathcal{F}_M$ that supports $\lambda^\pm_{\xi_M}$ (since the free factor support of 
	$\Lambda^\pm_{\xi_M}$ is a single free factor). Note that the free factor system $\mathcal{F}\wedge[F_M]$ is exactly the set set of components 
	of $\mathcal{F}$ that are contained in $[F_M]$. 
	Denote the stallings graph associated to $F_M$ by $K_M$ (which is core of covering space of $H$ associated to the subgroup $F_M$), 
	equipped with the immersion $p_M:K_M \rightarrow H$ such that $[p_*(\pi_1(K_M))] = [F_M]$. Since $F_M$ is a free factor of $\F$, we can embed $K_M$ inside a marked graph $G_M$ such that the map 
	$p_M$ lifts to a homotopy equivalence $q_M:G_M\to H$ that preserves the marking. In this setup, a line $\gamma$ is carried by 
	$[F_M]$ if and only if it is contained in $K_M$ and this implies that the leaves of the laminations $\Lambda^+_{\xi_M}$ and 
	$\Lambda^-_{\xi_M}$ are contained in $K_M$ and $q_M$ restricted to any such leaf is an immersion whose image is the 
	realization of $\gamma$ in $H$.

	A \textit{natural vertex} of $K_M$ is a vertex with valence greater than two and a \textit{natural edge} is an edge between two natural vertices. We can subdivide every natural edge of $K_M$ into \textit{edgelets}, 
	so that each edgelet is mapped to an edge in $H$ and label the edgelet by its image in $H$. There is a unique subgraph $\widehat{K}_M$ of $K_M$ which keeps track of the free factor components of 
	$\mathcal{F}\wedge[F_M]$. For such a subgraph, the restriction $p|_{\widehat{K}_M}$ is a homeomorphism onto the components of 
	$H_0$ corresponding to $\mathcal{F}\wedge[F_M]$. In fact, this restriction is a cellular isomorphism at the level of edgelets.
	This gives us  $$[K_M]= [F_M]= \mathcal{F}_{supp}(\mathcal{F}\wedge[F_M], \Lambda^\pm_{\xi_M})= \mathcal{F}_{supp}(\widehat{K}_M, \Lambda^\pm_{\xi_M})$$

Let $\gamma_M^-$ be a generic leaf of $\Lambda^-_M$ and $\gamma^+_M$ be a generic leaf of $\Lambda^+_M$. The realizations of these leaves in $H$  is contained in the subgraph $K_M$ and the above relation tells us that 
every natural edge of $K_M$ that is not contained in $\widehat{K}_M$ is crossed by both $\gamma^+_M$ and $\gamma^-_M$. We use this observation to 
prove a property of edgelets in $K_M$.
For each integer $C>0$ consider the set $Y_{M,C} \subset K_M$ to be the $C-$ neighborhood of the set of 
natural vertices of $K_M$ and put a path metric on $Y_{M,C}$ such that every edgelet of $K_M$ has length $1$. We make the following claim : 

\textit{Claim: } There exists a constant $C>0$, independent of $M$, such that each edgelet of $K_M\setminus Y_{M,C}$ is labeled by an edge of $H_0$.

Putting the proof of the claim off till the end of this proof, we use use the claim to show that both 
$\mathcal{F}_{\phi}, \mathcal{F}_{\psi}\sqsubset \mathcal{F}_M$. The above claim along with the fact that 
the number of natural vertices of $K_M$ is uniformly bounded above (depending only on rank of $\F$), implies that the graph 
$Y_{M,C}$ has uniformly bounded number of edgelets, independent of $M$.
 Also note that the set of edgelet labels (which are defined to be 
the edges of $H$) is finite and $K_M$ has uniformly bounded rank. Hence after passing to a subsequence, we may assume that there exists a homeomorphism 
$h_{M_i, M_j}:(K_{M_i},Y_{M_i,C}) \to (K_{M_j}, Y_{M_j,C})$ such that the restriction of $h_{M_i,M_j}$ to $Y_{M_i,C}$ maps edgelets to edgelets and 
preserves the labels. Now, since $M_i\to\infty$ the exponents of $\phi$ and $\phi$ appearing in $\xi_{M_i}$ diverges to $\infty$. This means that the expansion factor 
of $\xi_{M_i}\to\infty$ (this is visible in the proof of pingpong lemma \ref{pingpong}). Hence the edge-length of the projection of any natural edge of $K_{M_i}$ to $H$ diverges to $\infty$. 
Thus after passing to a subsequence and enlarging $Y_{M_i,C}$ if necessary, we may assume that the edgelet length of 
each component of $K_{M_i}\setminus Y_{M_i,C}$ goes to infinity with $M_i$. 

Consider a generic leaf $\gamma^+_\psi$ of the 
attracting lamination $\Lambda^+_\psi$. Since $\Lambda^+_\psi$ is not carried by $\mathcal{F}$, by our assumption (1) of relative independence \ref{relind}, we may choose 
a finite subpath $\sigma\subset\gamma_\psi$ such that $\sigma$ defines an attracting neighborhood of $\Lambda^+_\psi$ and 
$\sigma$ begins and ends with edges in $H\setminus H_0$. By using the uniformity of attracting neighborhoods from the pingpong lemma (\ref{pingpong} conclusion 7,8) we know that $\gamma^+_M$ belongs to this neighborhood for sufficiently large $M$. 
This means $\sigma \subset \gamma^+_M$ for sufficiently large $M$. Lifting everything to the stallings graph, we can see that the lift of $\sigma$ is a path contained in 
$K_M$ and the first and last edgelets of this lift is in $Y_{M,C}$ and the edgelet length of this lift is independent of $M$ (since $\gamma^+_\psi$  is not carried by $\mathcal{F}$).
For sufficiently large $M$, the edgelet length of each component of $K_{M}\setminus Y_{M,C}$ is greater than $L$, and hence 
the lift of $\sigma$ to $K_M$ must be contained in $Y_M$. Since $Y_{M,C}$ is independent of $M$, each finite subpath of 
$\gamma^+_\psi$ lifts to $Y_{M,C}$ and hence $\gamma^+_\psi$ lifts to $Y_{M,C}$. By a symmetric argument we can show that 
$\gamma^-_\phi$ lifts to $Y_{M,C}$. This implies that both $\Lambda^+_\psi$ and $\Lambda^-_\phi$ are carried by 
$[F_M]$, and hence $\Lambda^+_\psi$ and $\Lambda^-_\phi$ are carried by $\mathcal{F}$, which contradicts our standing hypothesis 
(assumption (1) in \ref{relind}). This completes the proof of the fact that the $\xi_M$'s are fully irreducible 
relative to $\mathcal{F}$ for all sufficiently large $M$.

\textit{proof of claim: } Suppose our claim is false. 
There exists a subsequence $(M_i)$ with $M_i\rightarrow \infty$ such that for each $i\geq 1$ there exists an edgelet $e_i$ of $K_M$ which projects to an edge of $H\setminus H_0$ and 
$e_i$ lies outside of $Y_{M,C}$. By taking $C$ sufficiently large, we may assume that $e_i$ is a central edgelet of an edgelet path $\epsilon_i$ of length greater than 
$2i+1$ in $K_M$ and that $\epsilon_i$ does not contain any natural vertices of $K_M$. Passing to a subsequence of $(M_i)$ we may 
assume that the image $p_{M_i}(e_i)$ in $H$ is independent of $i$. Continuing inductively by passing to further subsequences at each step
we get a subsequence of $(M_i)$ such that the image of the central $3, 5, 7, ....., 2i+1$ segment of $\epsilon_j$ in $H$ is constant independent of $j\geq i$.
Observe that each of these central segments of $\epsilon_j$ that we have constructed is in $K_M$ but lies outside of 
$Y_{M_i,C}$ and projects to an edge-path in $H\setminus H_0$ (hence lies outside $\widehat{K}_M$). This implies that both $\gamma^+_{M_i}$ and 
$\gamma^-_{M_i}$ cross $\epsilon_i$. This implies that the nested union of the projection of central segments of the $\epsilon_i$'s in $H$ is a line $\gamma$ that is a 
weak limit of some subsequence of $(\gamma^+_{M_i})_{i\geq1}$ and also a weak limit of some subsequence of 
$(\gamma^-_{M_i})_{i\geq1}$. But since this line crosses edges of $H\setminus H_0$ it is not supported by 
$\mathcal{F} = [H_0]$. The following sublemma now completes the proof:\\

 \textbf{sublemma: } If $\gamma$ is a weak limit of some subsequence of $(\gamma^+_{M_i})$ and it also a weak limit of some subsequence of 
 $(\gamma^-_{M_i})$, then $\gamma$ must be carried by $\mathcal{F}$.
 
The proof of this sublemma follows verbatim as the proof of item (9) within the proof of Theorem I in \cite{HM-13d}.
	
 \end{proof}

 We end this section with a corollary that is a direct application of the above theorem. This corollary gives us a way to construct relatively irreducible free subgroups in $\out$.

 \begin{corollary} \label{rfi}
 Given a free factor system $\mathcal{F}$ with co-edge number $\geq 2$, and given $\phi, \psi \in \out$ , if $\phi, \psi$ are fully irreducible relative 
 to $\mathcal{F}$, with corresponding invariant lamination pairs $\Lambda^\pm_\phi, \Lambda^\pm_\psi$ (as in the equivalence condition \ref{relind})such that the pair $\{\Lambda^+_\phi , \Lambda^-_\phi\}$ 
 is disjoint from the pair 
 $\{\Lambda^+_\psi, \Lambda^-_\psi\}$, then there exists an integer $M \geq 1$ such that for any $m,n \geq M$ the group $\langle \phi^m, \psi^n\rangle$ is a free group 
 of rank 2 and every element of this group is fully irreducible relative to $\mathcal{F}$. 
 Moreover, 
 \begin{enumerate}
  \item if both $\phi, \psi$ are nongeometric above $\mathcal{F}$, then every element of this free 
 group is also nongeometric above $\mathcal{F}$.
 \item if both $\phi,\psi$ are geometric above $\mathcal{F}$ and the geometric laminations $\Lambda^\pm_\phi$ and 
 $\Lambda^\pm_\psi$ come from the same surface, then every element of the free group is geometric, fully-irreducible 
 above $\mathcal{F}$ and they all have the same unique closed indivisible Nielsen path as $\phi$ and $\psi$.
 
 \end{enumerate}

 \end{corollary}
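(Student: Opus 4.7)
The strategy is to reduce the corollary to a direct application of Theorem A by verifying that the pair $(\phi, \Lambda^\pm_\phi), (\psi, \Lambda^\pm_\psi)$ satisfies independence relative to $\mathcal{F}$ in the sense of Definition \ref{relind}. The hypothesis of the corollary (full irreducibility rel $\mathcal{F}$, disjointness of lamination pairs, plus the geometric alternative) feeds directly into Lemma \ref{firelf}, which pins down the nonattracting subgroup systems $\mathcal{A}_{na}(\Lambda^\pm_\phi)$ and $\mathcal{A}_{na}(\Lambda^\pm_\psi)$, and these are precisely the objects controlled by Definition \ref{relind}.

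First I would dispatch conditions (1) and (2) of Definition \ref{relind}: by Lemma \ref{firelf} each lamination pair fills relative to $\mathcal{F}$, so neither is carried by the proper free factor system $\mathcal{F}$, and their union fills relative to $\mathcal{F}$ a fortiori. For conditions (3) and (4) (mutual weak attraction), take a generic leaf $\lambda$ of $\Lambda^+_\psi$. Since $\Lambda^+_\psi$ fills relative to $\mathcal{F}$ and $\lambda$ is bi-recurrent (hence not a cyclic axis), $\lambda$ is not carried by $\mathcal{A}_{na}(\Lambda^\pm_\phi)$, whether the latter equals $\mathcal{F}$ or $\mathcal{F}\cup\{[\langle\sigma_\phi\rangle]\}$. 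Lemma \ref{WAT} then gives attraction of $\lambda$ to either $\Lambda^+_\phi$ under $\phi$ or $\Lambda^-_\phi$ under $\phi^{-1}$. The disjointness hypothesis prevents $\lambda$ from being a leaf of $\Lambda^-_\phi$ (otherwise the weak closure $\overline{\lambda}=\Lambda^+_\psi\subset \Lambda^-_\phi$, contradicting disjointness), which combined with Lemma \ref{lam_incl} lets us route the attraction to the desired component. Running the same argument for $\Lambda^-_\psi$ and interchanging the roles of $\phi,\psi$ completes (3) and (4).

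For condition (5), in the nongeometric case Lemma \ref{firelf} gives $\mathcal{A}_{na}(\Lambda^\pm_\phi) = \mathcal{A}_{na}(\Lambda^\pm_\psi) = \mathcal{F}$, so mutual malnormality relative to $\mathcal{F}$ is automatic. In the geometric same-surface case, condition (5) is subsumed by condition (6) via the ``geometric above $\mathcal{F}$'' alternative, where the shared boundary class $[\sigma]$ is accommodated. With all six conditions verified, Theorem A supplies $M\geq 1$ so that $\langle\phi^m,\psi^n\rangle$ is free of rank $2$ and every non-power non-conjugate element is fully irreducible rel $\mathcal{F}$ with a filling lamination pair. Powers of $\phi$ and $\psi$ and their conjugates are manifestly fully irreducible rel $\mathcal{F}$ because that property is preserved by passing to powers and by conjugation within $\langle\phi^m,\psi^n\rangle$ acting on its own generators.

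For the addendum (1), Theorem A's final clause already states that when both $\Lambda^\pm_\phi, \Lambda^\pm_\psi$ are non-geometric the produced lamination pair is non-geometric; combined with the filling property, the converse direction of Lemma \ref{firelf} upgrades this to \emph{nongeometric above $\mathcal{F}$}. For (2), in the shared-surface geometric case, the unique closed indivisible Nielsen path of both $\phi$ and $\psi$ is the boundary curve $[\sigma]$ of the common surface, and any $\xi\in\langle\phi^m,\psi^n\rangle$ fixes $[\sigma]$; that $[\sigma]$ remains the unique closed iNp of $\xi$ then follows from the surface-realization of the dynamics above $\mathcal{F}$ together with the geometric characterization $\mathcal{A}_{na}(\Lambda^\pm_\xi) = \mathcal{F}\cup\{[\langle\sigma\rangle]\}$ forced by the corresponding conclusion of Theorem A (and the converse of Lemma \ref{firelf} for the geometric case). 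The main technical obstacle I anticipate is the rigorous upgrade of the Weak Attraction Theorem's disjunction to pinpoint attraction (step for conditions (3)--(4)), and, in the geometric case, carefully tracking that the boundary Nielsen path remains indivisible for arbitrary words, since a priori one could imagine it subdividing upon composition.
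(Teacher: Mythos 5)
The paper's own proof of this corollary is essentially two sentences: item~(1) ``follows directly from Theorem~A,'' and item~(2) is handled by appealing to Farb--Mosher's construction of Schottky subgroups in mapping class groups (\cite{FaM-02}, Theorem~1.4), with the detailed write-up deferred to~\cite{HM-15}. Your proposal instead tries to derive everything from Theorem~A by verifying, one at a time, the six conditions of Definition~\ref{relind}. For item~(1) this is a reasonable and more explicit route than the paper's; the verification of conditions~(3)--(4) via the Weak Attraction Theorem is in the right spirit but needs the ``Moreover'' clause of Lemma~\ref{WAT}: one must exhibit an attracting neighborhood $V^-_\phi$ missing the generic leaf $\lambda$ of $\Lambda^+_\psi$, which requires arguing carefully that $\lambda\notin\Lambda^-_\phi$ (you gesture at this but don't close it). That is a fixable technicality.

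The real gap is in item~(2). You claim that in the geometric same-surface case condition~(5) of Definition~\ref{relind} is ``subsumed by condition~(6),'' but these are independent conditions that must both hold, and in fact condition~(5) \emph{fails} there: when $\phi$ and $\psi$ are both geometric above $\mathcal{F}$ with a common peripheral class $[\sigma]$, Lemma~\ref{firelf}(3) gives $\mathcal{A}_{na}(\Lambda^\pm_\phi)=\mathcal{A}_{na}(\Lambda^\pm_\psi)=\mathcal{F}\cup\{[\langle\sigma\rangle]\}$, so $\langle\sigma\rangle$ is carried by both nonattracting systems without being carried by $\mathcal{F}$, violating mutual malnormality relative to $\mathcal{F}$ as defined after~\ref{relind}. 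Hence Theorem~A does not apply in the geometric case, and your argument for item~(2) has no foundation. The paper's route for item~(2) goes through the surface picture entirely: since $\phi,\psi$ preserve the same surface subgroup, they induce pseudo-Anosov mapping classes there, Farb--Mosher produces a Schottky group, and the closed indivisible Nielsen path for every word is forced to be the boundary curve because it is realized in the surface. That surface-theoretic input is what delivers both the irreducibility of arbitrary words and the uniqueness and persistence of the boundary Nielsen path---facts that, as you yourself flag at the end, your Theorem~A-based reasoning cannot recover.
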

 
 \begin{proof}
  The first item in the corollary follows directly from Theorem A. Item (2) is an application of the work of Farb and Mosher 
  in \cite{FaM-02} (Theorem 1.4) where they construct Schottky subgroups in mapping class groups of surfaces. A detailed write up 
  for (2) can be found in proof of Proposition 4.7 in \cite[Page 36, Case 2]{HM-15}.
 \end{proof}

 \section{Applications}
 \label{section4}
 In this section we look at some very interesting geometric properties of for the free subgroups we have constructed in 
 Corollary \ref{rfi}. We show that the free-by-free extension groups obtained by using the free subgroups constructed 
 in Corollary \ref{rfi} are (strongly) relatively hyperbolic. As far as the knowledge of the author goes, this is the 
 first time in the theory of $\out$ that such examples have been constructed. 
 
 The short exact sequence $$1\to \F \to \aut \to \out \to 1 $$ induces a short exact sequence 
 $$ 1\to \F \to \Gamma \to \mathcal{H} \to 1$$  for any subgroup $\mathcal{H}<\out$. The extension group $\Gamma$ 
 is a subgroup of $\aut$. For our purposes any short exact sequence of groups we talk about is obtained in this manner.

 In this subsection we give sufficient conditions when the extension group $\Gamma$ in the short exact sequence 
 is strongly  relatively hyperbolic relative to some collection of subgroups 
 of $\Gamma$ for some very interesting subgroups $\mathcal{H}\in\out$. The restrictions that we impose on 
 $\mathcal{H} $ are very natural in the sense that we generalize two very interesting results  \cite[Theorem 5.1]{BFH-97} and 
 \cite[Theorem 5.2]{BFH-97}. For simplicity we shall split the proof into cases so that cumbersome notations can be avoided 
 as much as possible. To make it easy for the reader, we also give a list of all notations used under a single heading and have tried to make it 
 as intuitive as possible.

  \textbf{Coned-off Cayley graph :}
  Given a group $G$ and a collection of subgroups $H_\alpha < G$, the coned-off Cayley graph of $G$ or 
  the \textbf{electric space} of $G$ relative to the collection $\{H_\alpha\}$ is a metric space which consists of the 
  Cayley graph of $G$ and a collection of vertices $v_\alpha$ (one for each $H_\alpha$) such that each point of 
  $H_\alpha$ is joined to (or coned-off at) $v_\alpha$ by an edge of length $1/2$. The resulting metric space is 
  denoted by $(\widehat{G}, {|\cdot|}_{el})$.

  A group $G$ is said to be (weakly) relatively hyperbolic relative to the collection of subgroups 
  $\{H_\alpha\}$ if $\widehat{G}$ is a $\delta-$hyperbolic metric space, in the sense of Gromov.
  $G$ is said to be strongly hyperbolic relative to the collection $\{H_\alpha\}$ if the coned-off 
  space $\widehat{G}$ is weakly hyperbolic relative to $\{H_\alpha\}$ and it satisfies the 
  \textit{bounded coset penetration} property (see \cite{Fa-98}). 
  But 
this bounded coset penetration property is a very hard condition to check for random groups $G$. 
However if the group $G$ is hyperbolic and the collection of subgroups $\{H_\alpha\}$  is 
\textit{mutually malnormal} and \textit{quasiconvex} then $\widehat{H}$ is strongly relatively hyperbolic. 
 
 We say that a collection of subgroups $\{H_\alpha\}$ of $\F$ is a \textit{mutually malnormal collection} of subgroups if 
each $H_\alpha$ is a malnormal subgroup i.e. $w^{-1}H_\alpha w \cap H_\alpha = \{e\}$ for all $w\notin H_\alpha$, and 
 for all $\alpha\neq\beta$ the intersection $w^{-1}H_\alpha w \cap H_\beta= \{e\}$ for all $w\in \F$.

 The main tool that we will be using to prove strong relative hyperbolicity is a generalization of the Bestvina-Feighn 
 Annuli Flare Condition \cite{BF-92}. It is known as the \textit{cone bounded hallways strictly flare condition} 
 and is due to Mj-Reeves.

 Before we state the result from Mj-Reeves we need to make a definition.
 \begin{definition}\label{mappingtorus}
   Let $H_\alpha<\F$ be malnormal and $\Phi\in\aut$ be such that $\Phi(H_\alpha)=H_\alpha$. Then define the \textbf{mapping torus of}   
   $H_\alpha$ to be 
   $$\langle\langle H_\alpha, t | t^{-1}w t = \Phi(w) , \forall w\in H_\alpha \rangle  \rangle < \F\rtimes_\Phi \mathbb{Z}$$
  where the symbol $t$ comes from the definition of mapping torus $\F\rtimes_\Phi \mathbb{Z}$.
     
 \end{definition}

Now let $\mathcal{H}<\out$  be a free group of rank 2 or an infinite cyclic subgroup and $H_\alpha<\F$ be malnormal and suppose that the 
conjugacy class of $H_\alpha$ is invariant under $\mathcal{H}$. Then one can always find a lift 
$\widetilde{\mathcal{H}}\in\aut$ of $\mathcal{H}$ such that for every $\Phi\in\widetilde{\mathcal{H}}$ we have 
$\Phi(H_\alpha)=H_\alpha$. We say that the lift \textit{preserves} $H_\alpha$ and hence one can define the semi-direct product 
$H_\alpha\rtimes \widetilde{\mathcal{H}}$ as a subgroup of $\F\rtimes\mathcal{H}$.

For the convenience of the reader, we recall some of the definitions from the Mj-Reeves paper and put it in context to the work being done here.
 
 \begin{enumerate}
  \label{discussion}
  \item \textbf{Induced tree of coned-off spaces: } 
  $\Gamma$ can be seen as a tree of hyperbolic metric spaces, where the tree structure, $T$, comes from the Cayley graph of the quotient group $\mathcal{H}$.
  The edge groups and vertex groups for this tree of spaces are identified with cosets of $\F$. The maps from the edge group  to the vertex groups 
  are quasi-isometries in this case. We electrocute each vertex and edge space by coning-off the copies of $H_\alpha$ in them. The resultant is that 
  $\Gamma$ now becomes a tree of strongly relatively hyperbolic metric spaces. 
  
  We electrocute the cosets of $H_\alpha$ in $\Gamma$ and obtain the \emph{induced tree of coned-off spaces}, denoted by $\widehat{\Gamma}$.
  The tree structure here again comes from $T$ as explained above with the vertex and edge spaces now being strongly relatively hyperbolic spaces.

  \item \textbf{Cone-locus} (from \cite{MjR-08}): The cone locus of $\hat{\Gamma}$, induced tree of coned-off spaces, is the forest whose vertex set consists of the cone-points of the 
vertex spaces of $\hat{\Gamma}$ and whose edge set consists of the cone-points in the edge spaces of $\hat{\Gamma}$. The incidence relations 
of the cone locus is dictated by the incidence relations in $T$.

Connected components of cone-locus can be identified with subtrees of $T$. 
Each connected component of the cone-locus is called a \textit{maximal cone-subtree}. 
The collection of maximal cone-subtrees is denoted by $\mathcal{T}$ and each element of 
$\mathcal{T}$ is denoted by $T_\alpha$. Each $T_\alpha$ gives rise to a tree $T_\alpha$ of horosphere-like subsets depending on 
which cone-points arise as vertices and edges of $T_\alpha$. The metric space that $T_\alpha$ gives rise 
to is denoted by $C_\alpha$ and is called \textit{maximal cone-subtree of horosphere-like
spaces}. The collection of $C_\alpha$ is denoted by $\mathcal{C}$.

Notice that from the choices involved in Lemma \ref{hallway},
we have an induced short exact sequence \[1\to H_\alpha \to\Gamma_\alpha \to \mathcal{H} \to 1\] 
In our context, the collection $\mathcal{C}$ corresponds to the collection of 
cosets of $\Gamma_\alpha= H_\alpha\rtimes \widehat{\mathcal{H}}_\alpha$ as a subgroup of $\Gamma$, for each $\alpha$. Note that the partially electrocuted metric space, $\hat{\Gamma}$,
can be viewed as electrocuting cosets of $H_\alpha$ in $\Gamma_\alpha$ across all 
cosets of $\Gamma_\alpha$ in $\Gamma$, for each $\alpha$. The maximal cone-subtrees, $T_\alpha$'s, are obtained from
this electrocution of $\Gamma_\alpha$; each coset of $\Gamma_\alpha$, after electrocution of 
cosets of $H_\alpha$ inside $\Gamma_\alpha$, gives us a maximal cone-subtree, i.e. equivalently the $C_\alpha$'s are electrocuted to $T_\alpha$'s in the first 
step of electrocution.

  \item \textbf{Hallway}(from \cite{BF-92}): A disk $f: [-m, m]\times I\to \hat{\Gamma}$ is a hallway 
  of length $2m$ if it satisfies the following conditions:
  \begin{enumerate}
   \item $f^{-1}(\cup \hat{\Gamma}_v: v\in T) = \{-m, ...., m\}\times I$.
   \item $f$ maps $i\times I$ to a geodesic in $\hat{\Gamma}_v$ for some vertex space.
   \item $f$ is transverse, relative to condition (1) to $\cup \hat{\Gamma}_e$.
  \end{enumerate}
  
  Recall that in our case, the vertex spaces being considered above are just copies of $\hat{\F}$ with the electrocuted metric (obtained 
  from $\F$ by coning-off the collection of subgroups $F^i$).

\item \textbf{Thin hallway}: A hallway is $\delta-$thin if $d(f(i,t), f(i+1,t)) \leq \delta$ for 
all $i, t$.
\item A hallway is $\lambda-$hyperbolic if 
$$\lambda l(f(\{0\}\times I)) \leq \text{max}\{l(f(\{-m\}\times I)), l(f(\{m\}\times I))\}$$
 
 \item \textbf{Essential hallway}: A hallway is essential if the edge path in $T$ resulting from projecting $\hat{\Gamma}$ onto $T$
 does not backtrack (and hence is a geodesic segment in the tree $T$).
 \item \textbf{Cone-bounded hallway} (from \cite[Definition 3.4]{MjR-08}): An essential hallway of length $2m$ is cone-bounded if 
 $f(i\times \partial I)$ lies in the cone-locus for $i=\{-m, ...., m\}$. 
 
 Recall that in our case, the connected components of the cone-locus are $T_\alpha$'s which are the cosets of 
 $\Gamma_\alpha$ (post electrocuting the cosets of $H_\alpha$ in $\Gamma_\alpha$) inside $\Gamma$.

 \item \textbf{Hallways flare condition} (from \cite{BF-92}, \cite{MjR-08}): 
 The induced tree of coned-off spaces, $\hat{\Gamma}$, is said to 
 satisfy the hallways flare condition if there exists $\lambda> 1, m\geq 1$ such that 
 for all $\delta$ there is some constant $C(\delta)$ such that any $\delta-$thin essential 
 hallway of length $2m$ and girth at least $C(\delta)$ is $\lambda-$hyperbolic.

 In our context, Proposition \ref{strictflare} shows the hallways flare condition is satisfied when the quotient group is infinite cyclic and 
 Proposition \ref{34} shows the hallways flare condition is satisfied when the quotient group is free, for $\hat{\Gamma}$ (with $\lambda= 2$), 
 since $\hat{\Gamma}$ is obtained from $\Gamma$ by electrocuting cosets of $H_\alpha$, for each $\alpha$.
 Thus $\hat{\Gamma}$ is a hyperbolic metric space by using the Bestvina-Feighn combination theorem and $\hat{\Gamma}$ is weakly hyperbolic 
 relative to the collection $\mathcal{T}$ (\cite[Lemma 3.8]{MjR-08}).
 
 Once the hyperbolicity of $\hat{\Gamma}$ is established, we can proceed to the second stage of 
 electrocution, where we electrocute the the maximal cone-subtrees, $T_\alpha$'s. The resulting space is quasi-isometric to electrocuting 
 the $C_\alpha$'s inside $\Gamma$ to a cone-point directly (see proof of \cite[Theorem 4.1]{MjR-08}) and thus this step shows that $\Gamma$ 
 is weakly hyperbolic relative to the collection of spaces $C_\alpha$ (equivalently, relative to the collection of mapping tori subgroups
 $\Gamma_\alpha$).
 
 \item \textbf{Cone-bounded hallways strictly flare condition} (from \cite[Definition 3.6]{MjR-08}): 
 The induced tree of coned-off spaces $\hat{\Gamma}$, is said to satisfy the cone-bounded hallways strictly flare condition if there exists 
 $\lambda>1, m\geq 1$ such that any cone-bounded hallway of length $2m$ is $\lambda-$hyperbolic.

 In our case, this condition is also verified in Proposition \ref{strictflare} for the infinite cyclic case and Proposition \ref{34} for the free group case, since each connected component of the 
 cone-locus is just $\Gamma_\alpha$ with the 
 electrocuted metric obtained by coning-off the subgroup $H_\alpha$ and it's cosets (in $\Gamma_\alpha$) and this electrocuted $\Gamma_\alpha$ can be viewed as a subspace  of 
 $\hat{\Gamma}$ (for which the flaring condition holds). 
 
 \end{enumerate}

 The main theorem of \cite[Theorem 4.6]{MjR-08} is a very general and powerful result and can be stated 
 as the following lemma, which is a special case of that theorem, and suffices for our purposes. 
 \begin{lemma}[Theorem 4.6, MjR-08]\label{hallway}
 Let $\mathcal{H} < \out$ be infinite cyclic or a free group of rank 2.
  Suppose  $\{H_\alpha\}$ is a mutually malnormal collection of quasiconvex subgroups of $\F$ such that the conjugacy classes 
  of each $H_\alpha$ is invariant under $\mathcal{H}$. If the hallways flare condition 
  is satisfied for the induced tree of coned-off spaces defined by this collection
  and the cone-bounded hallways strictly flare condition is also satisfied 
  then the extension group $\Gamma$ defined by the 
  short exact sequence $$1 \to \F \to \Gamma \to \mathcal{H} \to 1 $$ is strongly  hyperbolic relative to the 
  the collection of subgroups $\{H_\alpha\rtimes \widetilde{\mathcal{H}}_\alpha\}$ 
  where $\widetilde{\mathcal{H}}_\alpha$ is a lift that preserves $H_\alpha$.
  
 \end{lemma}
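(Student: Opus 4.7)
The plan is to view this lemma as essentially a specialization and repackaging of \cite[Theorem 4.6]{MjR-08}, so the proof proposal is really a checklist of how our data matches the Mj-Reeves setup. First I would construct the tree of spaces: since $\mathcal{H}$ is free (of rank $1$ or $2$), its Cayley graph $T$ is a tree, and the short exact sequence $1\to \F \to \Gamma \to \mathcal{H} \to 1$ realizes $\Gamma$ as a tree of spaces over $T$ with every vertex and edge group a copy of $\F$, and with edge-to-vertex attaching maps that are isomorphisms (hence quasi-isometries). Since $\F$ is hyperbolic and the collection $\{H_\alpha\}$ is malnormal and quasiconvex, by the standard Farb/Bowditch characterization each copy of $\F$ becomes strongly hyperbolic relative to $\{H_\alpha\}$ after coning off the cosets of the $H_\alpha$.

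The next step is to electrocute each vertex and edge space along the cosets of the $H_\alpha$ to build the induced tree of coned-off spaces $\widehat{\Gamma}$ described in item~(1) of the discussion preceding the statement. The hypothesis that each conjugacy class $[H_\alpha]$ is $\mathcal{H}$-invariant is precisely what is needed here: it guarantees that the attaching maps between edge and vertex spaces send cone-points to cone-points, so that $\widehat{\Gamma}$ is a well-defined tree of (uniformly) strongly relatively hyperbolic vertex spaces. The connected components of the cone-locus form the maximal cone-subtrees $T_\alpha$, and by tracking how the $\mathcal{H}$-action permutes the cones one identifies each $T_\alpha$ with a coset of the subgroup $\Gamma_\alpha = H_\alpha \rtimes \widetilde{\mathcal{H}}_\alpha$ inside $\Gamma$, where $\widetilde{\mathcal{H}}_\alpha$ is the chosen lift preserving $H_\alpha$.

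Now the two flaring hypotheses feed into the two stages of the Mj-Reeves argument. The first hypothesis, the hallways flare condition, lets one invoke the generalized Bestvina--Feighn combination theorem to conclude that $\widehat{\Gamma}$ is itself a (Gromov-)hyperbolic metric space. By \cite[Lemma 3.8]{MjR-08} this is equivalent to $\Gamma$ being weakly relatively hyperbolic with respect to the family $\{\Gamma_\alpha\}$, since the second-stage electrocution of the cone-subtrees $T_\alpha$ inside $\widehat{\Gamma}$ is quasi-isometric to electrocuting the cosets of $\Gamma_\alpha$ inside $\Gamma$ directly. The second hypothesis, the cone-bounded hallways strictly flare condition, is exactly what Mj-Reeves require to upgrade weak relative hyperbolicity to strong relative hyperbolicity, by establishing the bounded coset penetration property for the collection $\{\Gamma_\alpha\}$. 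Putting these together yields the conclusion.

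In this sense there is no genuine obstacle internal to this lemma: everything is already done in \cite[Theorem 4.6]{MjR-08}, and the content of the statement is only the translation dictionary between our combinatorial data ($\F$, $\mathcal{H}$, $\{H_\alpha\}$, the lifts $\widetilde{\mathcal{H}}_\alpha$) and the abstract tree-of-spaces data Mj--Reeves require. The real work postponed to the rest of the section, and where any genuine difficulty will appear, lies in verifying the two flaring conditions for the specific outer automorphisms and peripheral systems arising from the nonattracting subgroup system $\mathcal{A}_{na}(\Lambda^\pm)$; that verification is carried out in Proposition~\ref{strictflare} for the cyclic case and Proposition~\ref{34} for the free-group case.
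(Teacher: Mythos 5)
Your proposal is correct and follows essentially the same route as the paper: view $\Gamma$ as a tree of copies of $\F$ over the tree $T = \mathrm{Cay}(\mathcal{H})$, use malnormality and quasiconvexity of $\{H_\alpha\}$ to make each vertex/edge space strongly relatively hyperbolic, check the $\mathcal{H}$-invariance of $[H_\alpha]$ gives the required compatibility of cone-points, and then invoke Mj--Reeves's Theorem 4.6 with the two flaring hypotheses feeding its two stages. Your write-up is in fact somewhat more explicit than the paper's (which merely names the ``q.i.\ embedded,'' ``strictly type preserving,'' and ``q.i.\ preserving electrocution'' conditions and asserts them) in spelling out the two-stage electrocution and the identification of cone-subtrees with cosets of $\Gamma_\alpha$, but the underlying argument is the same.
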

\begin{proof}
 
 We have the following short exact sequence where the quotient group \[1\to\F\to\Gamma\to\mathcal{H}\to1\] is either a free group or an infinite 
 cyclic group. The Cayley graph of the quotient group is thus a tree, call it $T$, which enables us to view $\Gamma$ as a tree, $T$, of metric spaces.
 The vertex groups and edge groups of this tree of metric spaces are identified with cosets of $\F$ in $\Gamma$. The maps between the edge space and the two vertex spaces 
 (which are the initial and terminal vertices of the edge in consideration) are in fact quasi-isometries in this case. Since each $H_\alpha$ is 
 preserved up to conjugacy, it follows immediately that the \emph{q.i. embedded condition, strictly type preserving condition} and the \emph{q.i. preserving 
 electrocution condition} are all satisfied.

 By the condition imposed on the collection of subgroups $\{H_\alpha\}$ we know that each vertex space and each edge space is (strongly) hyperbolic 
 relative to this collection. Hence $\Gamma$ can be viewed as a tree, $T$, of strongly relatively hyperbolic spaces. Denote this tree of strongly relatively hyperbolic 
 spaces by $\Gamma$. In the Mj-Reeves theorem \cite[Theorem 4.6]{MjR-08}, $\widehat{\Gamma}$ is the \emph{induced tree of coned-off} 
 spaces which is obtained by electrocuting cosets of $H_\alpha$ in $\Gamma$. The collection of maximal parabolic subgroups in this case corresponds to the collection of subgroups $\{H_\alpha\rtimes \widetilde{\mathcal{H}}_\alpha\}$. 
 Hence if the hallways flare condition and the cone-bounded strictly flares condition is satisfied for $\widehat{\Gamma}$, the conclusion follows.
 \end{proof}

 It is worth pointing out that one could also use a combination theorem of Gautero \cite{Gau-16} to prove what we intend to 
 do with the Mj-Reeves combination theorem for the cyclic case. Infact, our proof here would fit in exactly into the framework of Gautero's 
 work and give us Theorems \ref{relhyp2} and \ref{geom}. However for the more general case when the quotient group 
 $\mathcal{H}$ in the short exact sequence $1\to\F\to\Gamma\to\mathcal{H}\to 1$ is a free group, it is perhaps much harder to deduce 
 relative hyperbolicity Gautero's work.
  
   \subsection{Relative hyperbolic extensions: nongeometric case}

   Recall that an element $\phi\in\out$ is fully irreducible and nongeometric above a multi-edge extension
   free factor system $\mathcal{F} = \{[F^1], [F^2], .... ,[F^k]\}$ if and only if 
   there exists a dual lamination pair
   $\Lambda^\pm_\phi$ such that the free factor support $\mathcal{F}_{supp}(\mathcal{F}, \Lambda^\pm) = \{[\F]\}$
   and the nonattracting subgroup system $\mathcal{A}_{na}(\Lambda^\pm_\phi)=\mathcal{F}$. In context of the strong combination theorem 
   \ref{hallway}, the collection $\{F^i\}$ is the collection $\{H_\alpha\}$ in the statement of the lemma.

   Under this hypothesis we get a strong relative hyperbolic extension $\widehat{\F}$ by performing 
   electrocution of the collection of subgroups $\{F^i\}$ since Handel and Mosher 
   prove that the nonattracting system in \cite{HM-13c} is malnormal system and in this case since we have 
   finitely generated subgroups of free groups, $F^i$ is quasiconvex. This strong relative 
   hyperbolic space is what induces the tree of strong relative hyperbolic spaces that we make use of 
   to apply the Mj-Reeves strong combination theorem.

  For simplicity of the proofs we make some assumptions and explain the connection with 
  Mj-Reeves work.

  \textbf{Notations: } For convenience of the reader we introduce and explain a few notations
 that are used in the proof.

 \begin{enumerate}
  \item $f:G\to G$ denotes a improved relative train track map for $\phi$ which is fully irreducible and nongeometric 
  above $\mathcal{F}$ and there is a filtration element $G_{r-1}$ of 
  $G$ such that $G_{r-1}$ realizes the free factor system $\mathcal{F}$. 
  \item $H_r$ is the EG strata sitting above $G_{r-1}$ that is associated to the attracting lamination $\Lambda^+_\phi$.
  \item Given a conjugacy class $\alpha$ in $\F$, we shall also denote the circuit representing $\alpha$ 
  in $G$ by $\alpha$ and work with the  length ${|\alpha|}_{H_r}$ which is described below. 
  \item ${|\alpha|}_{H_r}$ denotes the length of a path $\alpha \subset G$ with respect to $H_r$.  More precisely, 
  ${|\alpha|}_{H_r}$ is calculated by counting only edges of $H_r$.  
  \item For any given conjugacy class $\alpha$ in $\F$, ${||\alpha||}$ denotes the length of shortest representative of 
  the conjugacy class $\alpha$ and ${||\alpha||}_{el}$ will denote the length of the same 
  representative in the coned-off space $\widehat{\F}$, which is obtained from $\F$ by coning-off with respect to 
  the collection of subgroups $\{F^i\}$. Note that since $\mathcal{F}$ is a malnormal subgroup system, $\F$ is (strongly) relatively 
  hyperbolic with respect to the collection of subgroups $\{F^i\}$. 
  In what is to follow ${|\cdot|}_{el}$ will denote the electrocuted metric for this (strongly) relatively hyperbolic group. This is the electrocuted 
  metric being used in the statement of Conjugacy flaring \ref{conjflare}, strictly flaring \ref{strictflare} and the 3-of-4 strech 
  lemma \ref{34}.
  
  \item $f:G'\to G'$ denotes a improved relative train track map for  $\phi^{-1}$ which is fully irreducible and nongeometric 
  above $\mathcal{F}$ and there is a filtration element $G'_{s-1}$ of 
  $G'$ such that $G'_{s-1}$ realizes the free factor system $\mathcal{F}$.
  $H'_s$ is the EG strata sitting above $G'_{s-1}$ that is associated to the attracting lamination $\Lambda^-_\phi$.
 \end{enumerate}

  \textbf{Standing Assumption:} Continuing with the notations we have fixed above we make a list for the standing assumptions for 
  this section. For the rest of this section, unless otherwise mentioned, we will 
 assume that 
 \begin{enumerate}\label{SA}
 \item $\phi\in\out$ is  fully irreducible relative to an invariant free factor system 
 $\mathcal{F}$  and $\mathcal{F}\sqsubset [\F]$ 
 is a multi-edge extension and $\phi$ is nongeometric above $\mathcal{F}$.
 \item $\Lambda^+_\phi, \Lambda^-_\phi$ will denote the $\phi-$invariant dual lamination pair which sits above $G_{r-1}$ 
 such that $\mathcal{A}_{na}(\Lambda^\pm)=\mathcal{F}$ and $\mathcal{F}_{supp}(\mathcal{F},\Lambda^\pm)=[\F]$ (i.e. 
 $\Lambda^\pm$ fill relative to $\mathcal{F}$). 
 This is the equivalent to the above assumption by the work of Handel-Mosher \cite{HM-13d} (see \ref{firelf}).
 \item $\Phi\in\aut$ is a lift of $\phi$ such that $\Phi(F^i)={g_i}^{-1}F^ig_i$.
 \item We perform an electrocution of $\Gamma$ (the extension group in $1\to\F\to\Gamma\to\langle \phi \rangle\to 1$) 
 with respect to the collection of subgroups $\{F^i\}$. 
  We denote the resulting  metric space by $\widehat{\Gamma}$. 
  In the context of the Mj-Reeves work this is the 
 \textit{partially electrocuted space} they discuss in section 2.2 of \cite{MjR-08}. Whenever we use this electrocuted metric, 
 we shall spell it out clearly so as not to cause confusion with the electrocuted metric from $\widehat{\F}$. It shall be clear 
 from the context.
 
  \end{enumerate}

 Let $F^1*F^2 =\F$ and $k_1, k_2\in F^2$ be distinct elements. Then, $k_1, k_2$ cannot belong to the 
  same coset of $F^1$, since that would imply $k_2^{-1}k_1\in F^1$.  This shows that when $w$ 
  is written in terms of generators of $F^1$ and $F^2$, the geodesic path representing 
  the word $w$ in the coned off Cayley graph penetrates a coset for each appearance of 
  a generator of $F^2$ in representation $w$. In other words, the number of times
  generators of $F^2$ appear in the reduced form of $w$ track the length of the geodesic 
  representing $w$ in the coned off Cayley graph, up to an uniformly bounded error .

 For the proof we will follow the work of Bestvina-Feighn-Handel in \cite{BFH-97} very closely and generalize most of the 
 results in that paper to the setup that we have here. We begin with a small proposition that indicates why it is natural to cone-off 
 representatives of the nonattracting subgroup system and expecting flaring to occur in the resulting electrocuted metric space.

 \begin{proposition}\label{relflare}
  Let $\phi\in\out$  and $\mathcal{F}$ be a $\phi-$invariant free factor system such that $\mathcal{F}\sqsubset\{[\F]\}$  is a multi-edge extension
  and $\phi$ is fully irreducible relative to $\mathcal{F}$ and nongeometric above $\mathcal{F}$. 
  Then any conjugacy class not carried by $\mathcal{F}$ grows exponentially under iterates of $\phi$.
  
 \end{proposition}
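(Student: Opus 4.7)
The plan is to chain together Lemma \ref{firelf}, the nonattracting characterization Lemma \ref{NAS}(2), and the standard stretching behavior of EG strata encapsulated in Lemma \ref{doublesharp}. Since $\phi$ is fully irreducible relative to $\mathcal{F}$ and nongeometric above $\mathcal{F}$, Lemma \ref{firelf}(2) supplies a $\phi$-invariant dual lamination pair $\Lambda^\pm_\phi$ with $\mathcal{A}_{na}(\Lambda^\pm_\phi)=\mathcal{F}$ and with $\Lambda^\pm_\phi$ filling relative to $\mathcal{F}$. Fix an improved relative train track (CT) representative $f:G\to G$ of a rotationless power of $\phi$ whose filtration has an element $G_{r-1}$ realizing $\mathcal{F}$; by the multi-edge assumption, the stratum $H_r$ above $G_{r-1}$ is EG with Perron--Frobenius eigenvalue $\lambda>1$, and it carries $\Lambda^+_\phi$. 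It suffices to prove exponential growth for a rotationless power, since exponential growth of $\phi^p$ on $[c]$ trivially implies exponential growth of $\phi$ on $[c]$.

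Let $[c]$ be a conjugacy class not carried by $\mathcal{F}$. Since $\mathcal{F}=\mathcal{A}_{na}(\Lambda^+_\phi)$, Lemma \ref{NAS}(2) tells us that $[c]$ is weakly attracted to $\Lambda^+_\phi$ under iteration of $\phi$. A generic leaf $\lambda^+$ of $\Lambda^+_\phi$ realized in $G$ is $r$-legal and crosses edges of $H_r$ with positive density, so we may select an $r$-legal subpath $\beta\subset\lambda^+$ whose $H_r$-length exceeds $2C$, where $C$ is the bounded cancellation constant for $f$. By weak attraction, there exists $N=N([c])\geq 0$ such that the circuit representing $\phi^N([c])$ in $G$ contains $\beta$ as a subpath.

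Once $\beta$ is in place, Lemma \ref{doublesharp} applies: for every $k\geq 0$, the circuit of $\phi^{N+k}([c])=f_\#^{k}(\phi^N([c]))$ contains $f_{\#\#}^k(\beta)$ as a subpath, and the latter is an $r$-legal path obtained from $f_\#^k(\beta)$ by trimming at most $C$ edges from each end. Because $f$ maps $r$-legal paths to $r$-legal paths and the transition matrix of $H_r$ has Perron--Frobenius eigenvalue $\lambda$, the $H_r$-edge count satisfies
\[
|f_{\#\#}^k(\beta)|_{H_r}\;\geq\;\lambda^k\,|\beta|_{H_r}\;-\;O(1),
\]
so $\|\phi^{N+k}([c])\|\geq |f_{\#\#}^k(\beta)|_{H_r}$ grows at least like $\lambda^k$. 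This yields exponential growth of $[c]$ under $\phi$.

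The only subtlety is guaranteeing the existence of the long $r$-legal $H_r$-subpath $\beta$ inside some iterate $\phi^N([c])$; this is precisely where we need both the weak attraction hypothesis (supplied by Lemma \ref{NAS}(2)) and the legality of generic leaves of $\Lambda^+_\phi$ (built into the definition of EG lamination in a relative train track). Once that is in hand, no further work is required beyond invoking Lemma \ref{doublesharp} and the EG stretching constant $\lambda>1$, so the main obstacle is conceptual rather than technical: recognizing that $\mathcal{A}_{na}(\Lambda^\pm_\phi)=\mathcal{F}$ is exactly what allows us to convert the algebraic hypothesis ``not carried by $\mathcal{F}$'' into the dynamical statement ``attracted to $\Lambda^+_\phi$.''
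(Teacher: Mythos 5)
Your proof is correct and follows essentially the same route as the paper's: both deduce from Lemma~\ref{firelf} that $\mathcal{A}_{na}(\Lambda^\pm_\phi)=\mathcal{F}$, then invoke Lemma~\ref{NAS}(2) to convert ``not carried by $\mathcal{F}$'' into ``weakly attracted to $\Lambda^+_\phi$,'' and conclude exponential growth along the EG stratum $H_r$. The paper stops at that last step as an assertion, whereas you flesh it out by selecting a long $r$-legal subpath $\beta$ of a generic leaf and applying Lemma~\ref{doublesharp}; your Perron--Frobenius inequality $|f_{\#\#}^k(\beta)|_{H_r}\geq\lambda^k|\beta|_{H_r}-O(1)$ is stated with slightly imprecise constants (a multiplicative constant in front of $\lambda^k$ and a $>4C$ rather than $>2C$ length threshold to fit the decomposition required by Lemma~\ref{doublesharp}), but these do not affect the conclusion of exponential growth.
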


 \begin{proof}
  Let $f:G\to G$ be a improved relative train track map for $\phi$ with a core filtration element $G_r$ such that $G_r$ realizes the free factor system $\mathcal{F}$. 
  Then for any conjugacy class not carried by $\mathcal{F}$, its realization in $G$ is not entirely contained in $G_r$. Let $\sigma$ be a circuit in $G$ representing any such 
  conjugacy class.

  Our hypothesis implies that  the strata $H_{r}$ is an EG strata with an associated attracting lamination 
  $\Lambda^+_\phi$ such that $\mathcal{F}_{supp}(\mathcal{F}, \Lambda^+_\phi)=[\F]$ and $\mathcal{A}_{na}(\Lambda^+_\phi)=\mathcal{F}$.
    This gives us that $\sigma$ is not carried by $\mathcal{A}_{na}(\Lambda^+_\phi)$ and hence attracted to $\Lambda^+_\phi$, which means that 
    $\sigma$ must grow exponentially in terms of edges of $H_{r}$. 
   
 \end{proof}

 \textbf{Remark:} Note that the proof of the above proposition implies that any conjugacy class which is carried by 
 $\mathcal{F}$ grows at most 
 polynomially under iteration of $\phi$, when it's length is \textit{measured by only counting  legal segments in $H_r$}. This 
 motivates the definition of \textit{legality} of circuits which follows after this. The essential idea here is that 
 circuits with sufficient ``legality'' have  exponential growth when measured with respect to edges of $H_r$.

 Observe that this proposition essentially is an indication that flaring condition is satisfied.
 However, the uniformity of exponents are not clear from this. For that we will have to work 
 more and use some delicate arguments using legality of circuits.

 \textbf{Critical Constant: }
   Let $f:G\to G$ be a improved relative train track representative for some exponentially growing  
  $\phi\in\out$ with $H_r$ being an exponentially growing strata with associated Perron-Frobenius eigenvalue $\lambda$. 
  If $BCC(f)$ denotes the bounded cancellation constant for $f$, then the number $\frac{2BCC(f)}{\lambda-1}$ is called 
  the \textit{critical constant} for $f$. It can be easily seen that for every number $C>0$ that exceeds the 
  critical constant, there is some $\mu>0$ such that if $\alpha\cdot\beta\cdot\gamma$ is a concatenation of r-legal paths where 
   $\beta $ is some r-legal segment of length $\geq C$, then the r-legal leaf segment of 
   $f^k_\#(\alpha\cdot\beta\cdot\gamma)$ corresponding to $\beta$ has length $\geq \mu\lambda^k{|\beta|}_{H_r}$.
  To summarize, if we have a path in $G$ which has some r-legal ``central'' subsegment of length greater than the
    critical constant, then this segment is protected by the bounded cancellation lemma and under iteration, length of this segment grows exponentially.
  
 Recall that for any path $\alpha$ in $G$ the notation ${|\alpha|}_{H_r}$ denotes the $H_r$-edge length of $\alpha$.
  From Bestvina-Feighn-Handel train track theory, we know that there are no \emph{closed} indivisible Nielsen paths of height $r$.
 Also recall from  train track theory that unique indivisible Nielsen path of height $r$ (if it exists)
 has exactly one illegal turn in $H_r$ and hence does not occur as a subpath of any generic leaf of $\Lambda^+_\phi$.

  Fix some constant $C$ greater than the critical constant for $f$. The following definition is due to 
  \cite[page 236]{BFH-97}
  \begin{definition}\label{leg}
  For any circuit $\alpha$ in $G$, the $H_{r}$-legality of $\alpha$ is defined as the ratio 
  $${LEG}_{H_r}(\alpha):= \frac{\text{sum of lengths of generic leaf segments of $\Lambda^+_\phi$ in $\alpha\cap H_r$ of length} \geq C}{{|\alpha|}_{H_r}}$$
 if ${|\alpha|}_{H_r}\neq 0$. Otherwise, if ${|\alpha|}_{H_r}=0$, define ${LEG}_{H_r}(\alpha)=0$.
 
 \end{definition}
Note that this definition makes sense because there is no noncontractible strata above $H_r$ under our assumption that $\phi$ is fully irreducible 
and nongeometric above $\mathcal{F}$.
 
The next lemma shows that under our assumptions, given any conjugacy class $\alpha$ for all sufficiently large $m$, at least one of 
$\phi^m_\#(\alpha)$ and $\phi^{-m}_\#(\alpha)$ gathers enough legality to exceed the length ${|\alpha|}_{H_r}$. 
The most important output of the lemma is that this exponent $m$ can be made uniform.

\begin{lemma}\label{legality}
 Suppose $\phi$ is fully irreducible relative to $\mathcal{F}$ and satisfies the standing assumptions for this section \ref{SA}.
 Then there exists $\epsilon>0$ and some $M_2>0$ such that for every conjugacy class 
 $\alpha$ not carried by $\mathcal{F}$, either ${LEG}_{H_r}(\phi^M_\#(\alpha)) \geq \epsilon$ or 
 ${LEG}_{H'_s}(\phi^{-M}_\#(\alpha))\geq \epsilon$ for every $M\geq M_2$.
\end{lemma}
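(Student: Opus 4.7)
\bigskip
\noindent
\textbf{Proof proposal.}
The plan is to run the Weak Attraction Theorem (Lemma~\ref{WAT}), using the standing hypothesis $\mathcal{A}_{na}(\Lambda^\pm_\phi)=\mathcal{F}$, to force that some $\phi^{\pm M}_\#(\alpha)$ contains a prescribed long generic leaf segment; and then use the critical constant theory (Lemma~\ref{doublesharp} together with Lemma~\ref{expgrowth}) to turn ``contains one long legal subsegment'' into the quantitative density bound that $LEG$ demands.

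First I would set up the neighborhoods precisely. Fix CT representatives $f\colon G\to G$ for $\phi$ and $f'\colon G'\to G'$ for $\phi^{-1}$, with filtration elements $G_{r-1}, G'_{s-1}$ realizing $\mathcal F$ and EG strata $H_r, H'_s$ supporting $\Lambda^+_\phi, \Lambda^-_\phi$ respectively. Let $C$ exceed the critical constant of both $f$ and $f'$. Pick generic leaves $\lambda^\pm$ of $\Lambda^\pm_\phi$ and finite subpaths $\beta^\pm\subset\lambda^\pm$ with $|\beta^\pm|_{H_r}, |\beta^\pm|_{H'_s}\geq 5C$, and take $V^\pm_\phi := \widehat N(G,\beta^+)$ and $\widehat N(G',\beta^-)$ as attracting neighborhoods. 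Since $\beta^\pm$ begins and ends with edges in $H_r$ (resp.\ $H'_s$), no line carried by $\mathcal F$ lies in $V^\pm_\phi$.

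Next I would apply Lemma~\ref{WAT} to the pair $V^\pm_\phi$: because $\mathcal A_{na}(\Lambda^\pm_\phi)=\mathcal F$, there is a \emph{uniform} integer $L\ge 0$ so that for every line $\gamma$ not carried by $\mathcal F$, either $\gamma\in V^-_\phi$ or $\phi^L(\gamma)\in V^+_\phi$. Since $V^\pm_\phi$ are forward attracting under $\phi^{\pm 1}$, this dichotomy persists for every $M\geq L$: either $\phi^{-M}_\#(\alpha)$ contains $\beta^-$ as a subpath in $G'$, or $\phi^M_\#(\alpha)$ contains $\beta^+$ in $G$. This is the qualitative half of the lemma.

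To promote ``contains $\beta^\pm$'' to ``$LEG\geq\epsilon$'' I would argue as follows (treating Case B, the other is symmetric). Because $|\beta^+|_{H_r}\geq 5C$ exceeds the critical constant, Lemma~\ref{doublesharp} gives $f^k_\#(\beta^+)\subset f^k_{\#\#}(\phi^M_\#(\alpha))\subset \phi^{M+k}_\#(\alpha)$ with $H_r$-length at least $\mu\lambda^k|\beta^+|_{H_r}$ for a constant $\mu>0$ depending only on $C$, and such a subpath is legal (hence a generic leaf segment) of length $\ge C$, so it contributes fully to the numerator of $LEG_{H_r}(\phi^{M+k}_\#(\alpha))$. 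The same reasoning applies not only to $\beta^+$ but to every $H_r$-edge of $\phi^L_\#(\alpha)$: under $f^{M-L}_\#$ each edge stretches to an $r$-legal path of length $\asymp\lambda^{M-L}$, and all illegal turns of height $r$ in $\phi^M_\#(\alpha)$ are in persistent, bounded correspondence with those of $\phi^L_\#(\alpha)$ (RTT condition~(3)). Consequently, for $M$ large, $\phi^M_\#(\alpha)$ is (cyclically) a concatenation of $r$-legal segments, all but boundedly many of length much greater than $C$, whose total $H_r$-length exhausts all but a vanishing fraction of $|\phi^M_\#(\alpha)|_{H_r}$.

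The main obstacle is the \emph{uniformity} of $M_2$ and $\epsilon$ across all $\alpha$: the naive lower bound $|\beta^+|_{H_r}/|\phi^L_\#(\alpha)|_{H_r}$ for $LEG$ depends on $\alpha$. I would resolve this by a compactness argument in the weak topology on $\mathcal B$. Suppose the conclusion fails: there exist $\alpha_n$ not carried by $\mathcal F$ and $M_n\to\infty$ with $LEG_{H_r}(\phi^{M_n}_\#(\alpha_n))<1/n$ and $LEG_{H'_s}(\phi^{-M_n}_\#(\alpha_n))<1/n$. Passing to a weak limit $\alpha_n\to\gamma$, the WAT applied to $\gamma$ combined with the openness of $V^\pm_\phi$ and the continuity of $\phi_\#$ forces $\gamma$ to be carried by $\mathcal F$; but then any finite subpath of $\gamma$ is $\mathcal F$-carried, so for large $n$ almost all of the ``visible'' edges of $\phi^{M_n}_\#(\alpha_n)$ lie below $H_r$, contradicting the fact that Case B (or A) guarantees a fixed long segment $\beta^\pm$ of $H_r$ (resp.\ $H'_s$) edges in $\phi^{\pm M_n}_\#(\alpha_n)$. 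Fixing such a subpath in the limit reappears in the tails and drives $LEG$ above a uniform $\epsilon$, which is the required contradiction and completes the proof.
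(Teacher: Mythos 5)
The first two steps of your plan — using the Weak Attraction Theorem (Lemma~\ref{WAT}) with $\mathcal{A}_{na}(\Lambda^\pm_\phi)=\mathcal{F}$ to get a uniform $M_3$ forcing $\phi^{\pm M}_\#(\alpha)$ to contain the fixed subpath $\beta^\pm$, and then invoking the critical constant to see that this subpath contributes legal segments that grow — do match the paper's opening moves. But the compactness argument you run for the uniformity of $\epsilon$ has a genuine gap, and that step is the entire content of the lemma.

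You assert that if $\alpha_n\to\gamma$ weakly and both legalities go to zero, then WAT ``forces $\gamma$ to be carried by $\mathcal{F}$,'' and you derive a contradiction from the presence of the fixed segment $\beta^\pm$ in $\phi^{\pm M_n}_\#(\alpha_n)$. Neither half works. First, WAT does not force the weak limit into $\mathcal{A}_{na}(\Lambda^\pm_\phi)$: it offers a trichotomy, and you have no way to rule out $\gamma\in V^-_\phi$ or $\phi^L_\#(\gamma)\in V^+_\phi$, both of which are consistent with $LEG_{H_r}(\phi^{M_n}_\#(\alpha_n))\to 0$, since legality is a \emph{ratio} and a single fixed copy of $\beta^\pm$ does not keep the ratio bounded below when $|\phi^{M_n}_\#(\alpha_n)|_{H_r}$ grows without bound. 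Second, weak convergence $\alpha_n\to\gamma$ only says that every finite subpath of $\gamma$ reappears in $\alpha_n$; it gives no control in the other direction, so ``almost all of the visible edges of $\phi^{M_n}_\#(\alpha_n)$ lie below $H_r$'' simply does not follow from $\gamma$ being $\mathcal{F}$-carried. Your closing sentence (``fixing such a subpath in the limit reappears in the tails and drives $LEG$ above a uniform $\epsilon$'') gestures at a fix but contradicts your own premise that both legalities were sent to zero.

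The paper's contradiction runs in the opposite direction, and that reversal is exactly the idea you are missing. From legality going to zero one extracts \emph{subpaths} $\beta_i\subset\alpha_i$ with $|\beta_i|_{H_r}\to\infty$ such that neither $\phi^{M}_\#(\beta_i)$ nor $\phi^{-M}_\#(\beta_i)$ contains a long $r$-legal (resp. $s$-legal) generic leaf segment; after passing to a subsequence one arranges a common subpath $\tau$ in $H_r$ not carried by the nonattracting subgraph and crossed by all $\beta_i$. A weak limit $l$ of the $\beta_i$'s then \emph{contains} $\tau$, hence is \emph{not} carried by $\mathcal{A}_{na}(\Lambda^\pm_\phi)=\mathcal{F}$. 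WAT now says $l$ must be attracted (in one direction) to $\Lambda^{\pm}_\phi$ and so must eventually pick up long generic leaf segments; but $\phi^{\pm M}_\#(l)$ inherits the defect of the $\beta_i$'s and never does. That is the contradiction. So the crucial missing ingredient in your proposal is to take the weak limit of carefully chosen ``bad'' \emph{subpaths} rather than the whole circuits, so that the limiting line is provably \emph{not} $\mathcal{F}$-carried; once you have that, WAT delivers the contradiction cleanly without any appeal to the (false) claim that weak convergence controls length ratios.

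A smaller but real issue: in your second paragraph you overstate what the critical constant argument gives. Claiming that $\phi^M_\#(\alpha)$ is ``a concatenation of $r$-legal segments, all but boundedly many of length much greater than $C$, whose total $H_r$-length exhausts all but a vanishing fraction'' would say legality tends to $1$ uniformly, which is false: the number of illegal turns in $\alpha$ is unbounded over all $\alpha$, so the bounded cancellation near illegal turns does not automatically dominate. The lemma only asserts a uniform lower bound $\epsilon>0$, and proving even that requires the weak-limit argument above.
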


\begin{proof}
 
The proof is a direct application of the weak attraction theorem. If $\gamma^+, \gamma^-$  are 
generic leaves of $\Lambda^+, \Lambda^-$ respectively, choose a sufficiently long finite subpaths 
 $\beta\subset\gamma^+, \beta'\subset\gamma^-$ such that $\beta$ is r-legal in $G$ and 
 $\beta'$ is r-legal in $G'$. By enlarging $\beta, \beta'$ if necessary assume that 
 they define weak attracting neighborhoods of $\Lambda^+, \Lambda^-$ respectively. By 
 enlarging $\beta, \beta'$ if necessary assume that 
 $C \leq \mathsf{min}\{|\beta|, |\beta'|\}$. 
 
 Now consider any conjugacy class in $\F$ which is not carried by $\mathcal{F}= \mathcal{A}_{na}(\Lambda^\pm)$.
Applying the weak attraction theorem we can deduce that there exists some $M_3$ such that 
either $\alpha\in N(G',\beta')$ or $\phi^{M_3}_\#(\beta)\in N(G, \beta)$. If the later case happens 
then using the fact that $\phi_\#(N(G, \beta)\subset N(G,\beta))$, 
we have that $\phi^{M}_\#(\beta)\in N(G, \beta)$ for all $M>M_3$. If we have $\alpha\in N(G',\beta')$, 
then using that $\phi^{-1}_\#(N(G',\beta'))\subset N(G',\beta')$ we have that 
$\phi^{-M}_\#(\alpha)\in N(G', \beta')$ for all $M>M_3$. This shows that either $\phi^M_\#(\alpha)$ 
contains a r-legal  subsegment of length $\geq C$ in $G$ or $\phi^{-M}_\#(\alpha)$ contains a s-legal 
subsegment of length $\geq C$ in $G'$.

It remains to show that there is some $\epsilon>0$ for every conjugacy class $\alpha$ either 
${LEG}_{H_r}(\phi^M_\#(\alpha))>\epsilon$ or ${LEG}_{H'_s}(\phi^{-M}_\#(\alpha))>\epsilon$.
Suppose on the contrary that this claim is false.

Then we get a sequence of conjugacy classes $\{\alpha_i\}$ such that both 
${LEG}_{H_r}(\phi^M_\#(\alpha_i))\to 0$ and ${LEG}_{H'_s}(\phi^{-M}_\#(\alpha_i))\to 0$.
This implies that we can find segments $\beta_i \subset \alpha_i'$s such that ${|\beta_i|}_{H_r}\to\infty$ 
(respct. ${|\beta_i|}_{H'_s}\to\infty$). 
 such that neither $\phi^M_\#(\beta_i)$ nor $\phi^{-M}_\#(\beta_i)$ contain 
any central r-legal (respct. s-legal) generic leaf segment of length $\geq C$ in $H_r$ (respct. $H'_s$). 
But $\beta_i$'s (by construction) are not carried by $\mathcal{F}=\mathcal{A}_{na}(\Lambda^\pm_\phi)$ and have arbitrarily long 
 lengths  in $H_r$ and $H'_s$.
Hence, after passing to a further subsequence if necessary 
 we may assume that there is common (sufficiently long) subpath $\tau$ in $H_r$ which is crossed by all the $\beta_i$'s and 
 $\tau$ is not carried by the nonattracting subgraph that is used to construct $\mathcal{A}_{na}(\Lambda^\pm_\phi)$ .
 This implies that some weak limit of some subsequence 
 of $\beta_i$'s is a line $l$ which contains the path $\tau$. We shall derive a contradiction to this.

 Now observe that  $\phi^M_\#(l)$ (respct. $\phi^{-M}_\#(l)$) does not
 contain any subsegment of a generic leaf of $\Lambda^+_\phi$  (respc. $\Lambda^-_\phi$) of length $\geq C$ in $H_r$ (respct. $H'_s$). Using the 
 weak attraction theorem again, this implies that $l$ is carried by $\mathcal{A}_{na}(\Lambda^\pm_\phi)$ and hence contained in the nonattracting
 subgraph. But this contradicts that $l$ contains the subpath $\tau$.

 Hence there exists some $\epsilon>0$ such that 
for every conjugacy class $\alpha$, either \\ ${LEG}_{H_r}(\phi^M_\#(\alpha))>\epsilon$ 
or ${LEG}_{H'_s}(\phi^{-M}_\#(\alpha))>\epsilon$.
\end{proof}

 \begin{lemma}\label{flare}
 Suppose $\phi$ is fully irreducible relative to $\mathcal{F}$ and satisfies the standing assumptions for this section \ref{SA}.
 For every $\epsilon>0$ and $A>0$, there is $M_1$ depending only on $\epsilon, A$ such that 
 if ${LEG}_{H_{r}}(\alpha)\geq\epsilon $ for some circuit $\alpha$ then 
 $${|f^m_\#(\alpha)|}_{H_r}\geq A {|\alpha|}_{H_r} $$ for every $m>M_1$. 
\end{lemma}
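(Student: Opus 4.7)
The plan is to leverage the critical constant property stated just before Definition \ref{leg}, which promises exponential growth of any sufficiently long central r-legal segment under iteration of $f_\#$. Concretely, since the constant $C$ fixed in the definition of $\mathrm{LEG}_{H_r}$ exceeds the critical constant for $f$, there is a constant $\mu > 0$ (depending only on $C$ and $f$) such that whenever a path in $G$ can be written as $\eta \cdot \beta \cdot \eta'$ with $\beta$ an r-legal generic leaf segment of length $\geq C$ and $|\eta|, |\eta'|$ at least $2 \cdot \mathrm{BCC}(f)$, the iterate $f^m_\#(\eta \cdot \beta \cdot \eta')$ contains an r-legal subpath corresponding to $\beta$ of $H_r$-length at least $\mu \lambda^m \, |\beta|_{H_r}$, where $\lambda>1$ is the Perron--Frobenius eigenvalue of $H_r$.

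First I would unpack the hypothesis: $\mathrm{LEG}_{H_r}(\alpha) \geq \epsilon$ supplies a finite family of pairwise disjoint r-legal generic-leaf subsegments $\beta_1, \ldots, \beta_k$ of $\alpha$, each of $H_r$-length $\geq C$, with $\sum_i |\beta_i|_{H_r} \geq \epsilon\, |\alpha|_{H_r}$. Because $\alpha$ is a circuit (so no genuine boundary) and each $\beta_i$ has length $\geq C$, by shortening each $\beta_i$ at its ends by at most $2\,\mathrm{BCC}(f)$ and discarding any $\beta_i$ with $|\beta_i|_{H_r} < 4\,\mathrm{BCC}(f) + 1$ (which contributes a bounded total amount), I obtain modified segments $\beta_i'$ of length $\geq C' := C - 4\,\mathrm{BCC}(f)$ --- chosen originally so that $C'$ still exceeds the critical constant --- each flanked in $\alpha$ by collars of size $\geq 2\,\mathrm{BCC}(f)$, and such that $\sum_i |\beta_i'|_{H_r} \geq (\epsilon/2) \, |\alpha|_{H_r}$ once $|\alpha|_{H_r}$ is large (small cases being handled separately since any growth constant dominates a bounded ratio).

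Applying Lemma \ref{doublesharp} to each collared segment, the iterates $f^m_{\#\#}(\beta_i')$ sit as pairwise disjoint subpaths inside $f^m_\#(\alpha)$, and by the critical constant property each such subpath has $H_r$-length at least $\mu \lambda^m |\beta_i'|_{H_r}$. Summing gives
\[
|f^m_\#(\alpha)|_{H_r} \;\geq\; \sum_{i} \mu \lambda^m |\beta_i'|_{H_r} \;\geq\; \tfrac{\mu \epsilon}{2}\, \lambda^m\, |\alpha|_{H_r}.
\]
Choosing $M_1 := \lceil \log(2A/(\mu \epsilon)) / \log \lambda \rceil$ ensures that for all $m > M_1$ we have $|f^m_\#(\alpha)|_{H_r} \geq A\, |\alpha|_{H_r}$, and $M_1$ depends only on $\epsilon$ and $A$ (since $\mu$ and $\lambda$ are intrinsic to $f$).

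The main obstacle I anticipate is the bookkeeping around collars: one needs to be careful that the $2\,\mathrm{BCC}(f)$-protection on each side of a given $\beta_i$ lies entirely within $\alpha$ (so that Lemma \ref{doublesharp} applies) and that the iterated legal pieces are genuinely disjoint in $f^m_\#(\alpha)$ rather than merged by the combinatorics of $f^m_\#$. Both are guaranteed by a uniform choice of $C$ sufficiently larger than the critical constant plus a multiple of $\mathrm{BCC}(f)$, at the harmless cost of halving the legality constant; since $C$ has already been fixed once and for all in Definition \ref{leg}, this can be arranged by adjusting $\epsilon$ to $\epsilon/2$ at the start of the argument.
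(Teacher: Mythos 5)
Your proof follows the same line as the paper's: apply the critical-constant estimate to the long r-legal generic-leaf subsegments guaranteed by the legality hypothesis, sum the exponentially growing contributions, and take $M_1$ so that $\mu\lambda^m\epsilon \geq A$. The paper's argument is much shorter because the collar and bounded-cancellation bookkeeping you carry out by hand (via Lemma \ref{doublesharp}) is already absorbed into the critical constant property stated just before Definition \ref{leg}: the constant $\mu$ is defined precisely so that it accounts for protecting any r-legal central subsegment of $H_r$-length $\geq C$ from end-cancellation under iteration, so no shortening to $\beta_i'$, discarding of short segments, or degradation of the legality constant is needed.

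One wrinkle in your bookkeeping: the constant $C$ is fixed once and for all in Definition \ref{leg}, before the quantifier ``for every $\epsilon>0$'' in the lemma statement, so you cannot re-choose $C$ as a function of $\epsilon$ to force $\sum_i|\beta_i'|_{H_r}\geq(\epsilon/2)\,|\alpha|_{H_r}$. The actual loss from shortening each $\beta_i$ by $4\,\mathrm{BCC}(f)$ is at most a factor of $4\,\mathrm{BCC}(f)/C$ per segment (since each $\beta_i$ has $H_r$-length at least $C$), so the surviving legality is at least $\bigl(1-4\,\mathrm{BCC}(f)/C\bigr)\epsilon$, a constant multiple of $\epsilon$ with the factor depending only on $f$. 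This repairs your estimate --- and also shows the detour was unnecessary, since the paper's direct use of $\mu$ already delivers $|f^m_\#(\alpha)|_{H_r}\geq\mu\lambda^m\epsilon\,|\alpha|_{H_r}$ with no penalty.
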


\begin{proof}
 Let $\mu>0$ be as described above in the description of critical constant. Then 
 $${|f^m_\#(\alpha)|}_{H_r}\geq \mu\lambda^m |\text{\small sum of lengths of generic leaf segments of } \Lambda^+_\phi \text{ in $\alpha\cap H_r$ of length }\geq C|$$
 $\implies{|f^m_\#(\alpha)|}_{H_r}\geq \mu\lambda^m\epsilon {|\alpha|}_{H_r}$. 
 
 If we choose 
 $m$ to be sufficiently large then we can set $ A \leq \mu\lambda^m\epsilon $ and we have the desired 
 inequality.
\end{proof}

 \begin{lemma}\label{comparison}
    Suppose $\phi$ is fully irreducible relative to $\mathcal{F}$ and satisfies the standing assumptions for this section \ref{SA} 
    and $\alpha$ is any circuit in $G$ with ${|\alpha|}_{H_r}>0$. Then 
 there exists some $K>0$, independent of $\alpha$, such that $K \geq {|\alpha|}_{H_r}/{||\alpha||}_{el}\geq \frac{1}{K}$. 
 
 In particular, this inequality is true for every circuit that represents a conjugacy class which is not carried by $\mathcal{F}$.
   \end{lemma}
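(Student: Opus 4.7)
The plan is to compare the two lengths via the combinatorial structure of $\alpha$ in $G$. Since $G_{r-1}$ realizes $\mathcal{F} = \{[F^1], \dots, [F^k]\}$ and $H_r$ is the EG stratum above it associated to $\Lambda^+_\phi$, the circuit $\alpha$ admits a cyclic decomposition as an alternation of maximal $G_{r-1}$-subarcs $\beta_1, \dots, \beta_n$ and maximal $H_r$-segments $\eta_1, \dots, \eta_N$, where $n \leq N$. Each $\beta_i$ lies in a single component of $G_{r-1}$ and therefore lifts via the marking to a word in a conjugate of some $F^{j_i}$, while each $\eta_j$ lifts to a word of length bounded by a constant $M$ depending only on $G$ and the marking. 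Note that $|\alpha|_{H_r}$ is comparable, up to multiplicative constants depending only on $G$, to the number $N$. The ``in particular'' assertion of the lemma follows from the observation that every conjugacy class not carried by $\mathcal{F} = \mathcal{A}_{na}(\Lambda^\pm_\phi)$ must cross $H_r$, so $|\alpha|_{H_r} > 0$ in that case.

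For the upper bound $\|\alpha\|_{el} \leq K_1 |\alpha|_{H_r}$, realize $\alpha$ in the Cayley graph of $\F$ via the marking. Each $\beta_i$, being a word in a conjugate of some $F^{j_i}$, can be traversed in $\widehat{\F}$ using length at most $1$ by routing through the appropriate cone vertex, while each $\eta_j$ contributes at most $M$ to the coned-off length. Summing cyclically yields $\|\alpha\|_{el} \leq n + MN \leq (1+M)|\alpha|_{H_r}$, after absorbing the bilipschitz constants relating circuit length in $G$ to cyclic $\F$-word length.

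For the lower bound $|\alpha|_{H_r} \leq K_2 \|\alpha\|_{el}$, use the $\F$-equivariant collapsing map from $\tilde{G}$ onto the Bass-Serre tree $T_\mathcal{F}$ of the free-product splitting determined by $\mathcal{F}$; this map sends each $G_{r-1}$-component to a single vertex, so the translation length of $[\alpha]$ on $T_\mathcal{F}$ equals $N$, which is comparable to $|\alpha|_{H_r}$. Since the $F^i$ are quasiconvex and malnormal free factors, there is an $\F$-equivariant quasi-isometric embedding $T_\mathcal{F} \hookrightarrow \widehat{\F}$ under which translation length is preserved up to multiplicative error, and hence matches the translation length of $[\alpha]$ on $\widehat{\F}$, which is in turn bilipschitz to $\|\alpha\|_{el}$ since $[\alpha]$ is loxodromic on $\widehat{\F}$ (as it is not carried by $\mathcal{F}$). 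The main subtlety is verifying that $\|\alpha\|_{el}$, the coned-off length of the $\F$-shortest cyclic representative, coincides up to uniform constants with the $\widehat{\F}$-translation length of $[\alpha]$; this is a standard fact for loxodromic elements in strongly relatively hyperbolic groups, and is the one step where the malnormality and quasiconvexity of the peripheral system (supplied by Lemma \ref{NAS}) is really needed.
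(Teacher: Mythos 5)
Your argument hinges on the assertion that $|\alpha|_{H_r}$ is comparable to $N$, the number of maximal $H_r$-segments in $\alpha$, and on the related claim that each $\eta_j$ maps to a word of uniformly bounded length. Both are false: a maximal $H_r$-segment in a circuit can have arbitrarily many edges (e.g.\ a circuit containing a long $r$-legal leaf segment of $\Lambda^+_\phi$ has one long $H_r$-segment, so $N=1$ while $|\alpha|_{H_r}$ is huge), and indeed the entire point of the surrounding flaring lemmas is that under iteration of $f$ the $H_r$-segments of a circuit grow without bound. The quantities $N$ and $|\alpha|_{H_r}$ are therefore not mutually comparable by any constant independent of $\alpha$.

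This error is fatal to your lower bound. The Bass--Serre tree $T_\mathcal{F}$ collapses each $G_{r-1}$-component (and each $H_r$-segment becomes a single edge or bounded path), so the translation length of $[\alpha]$ on $T_\mathcal{F}$ records the number of transitions between peripheral factors, i.e.\ essentially $N$, and forgets the length of the non-peripheral segments entirely. The coned-off Cayley graph $\widehat{\F}$ is coarser than $T_\mathcal{F}$ precisely because it \emph{retains} the $\F$-lengths of the non-peripheral excursions, and it is this retained length that $|\alpha|_{H_r}$ is comparable to. What your argument would establish, if carried out carefully, is a comparison between $N$ and the translation length on $T_\mathcal{F}$ --- a true but much weaker statement, and not the one the lemma needs. (Your upper bound estimate $\|\alpha\|_{el}\leq n + M N\leq (1+M)|\alpha|_{H_r}$ also misuses the bound on each $\eta_j$, though in that direction the correct estimate $\|\alpha\|_{el}\leq n + M\sum_j |\eta_j| \leq (1+M)|\alpha|_{H_r}$ is available and the final inequality is salvageable.)

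The paper's proof avoids this by working directly with the electrocuted metric on $\tilde G$: the upper bound assigns each $H_r$-edge a bounded electrocuted cost $L_2$, and the lower bound decomposes an electrocuted geodesic into maximal cone-free subpaths $\tilde\beta_i$ and uses that these fall into finitely many $\F$-orbits, so each contributes boundedly many $H_r$-edges after projection and tightening. The essential insight is that comparability must be established edge-by-edge (or segment-by-segment on cone-free pieces whose $H_r$-length is uniformly bounded), never via a count of segments that discards their lengths.
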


\begin{proof}
 Let $\tilde{G}$ denote the universal cover of the marked graph $G$. Consider $\tilde{G}$ equipped with the electrocuted metric obtained by electrocuting 
 images of all cosets of $F^i$'s, by using the marking on $\tilde{G}$, which is obtained by lifting the marking on $G$. 
 This electrocuted metric is denoted by ${|\cdot|}^{\tilde{G}}_{el}$. 
 Then any path in $\tilde{G}$ that is entirely contained in the 
 copy of a coset of some $F^i$ projects down to a path in $G$ which is carried by nonattracting subgraph $\mathcal{Z}$ that defines the 
 nonattracting subgroup system $\mathcal{A}_{na}(\Lambda^\pm_\phi)$. Note that the 
 assumption on legality of $\alpha$ ensures that we are working with a conjugacy class which is not carried by $\mathcal{A}_{na}(\Lambda^\pm_\phi)$, 
 equivalently the circuit $\alpha$ representing such a conjugacy class is not carried by $\mathcal{Z}$, hence 
 ${|\alpha|}_{H_r}\geq 1$.
 
 Let $E_i$ be some edge not carried by $\mathcal{Z}$. Then the lift $\tilde{E}_i$ is a path in $\tilde{G}$ which 
 is not entirely contained in the copy of some coset of $F^i$. Let $L_2 = \text{max}\{{|\tilde{E}_i|}^{\tilde{G}}_{el}\}$ where $E_i$ varies 
 over all edges of $G$ which are not carried by $\mathcal{Z}$. If $\alpha$ is any circuit in $G$ which is not carried 
 by the nonattracting subgroup system, then  
 \[ {|\tilde{\alpha}|}^{\tilde{G}}_{el}\leq L_2 {|\alpha|}_{H_r} 
 \implies {|\tilde{\alpha}|}^{\tilde{G}}_{el}/ {|\alpha|}_{H_r}\leq L_2\]
 
 Next suppose that $\tilde{\beta}_i$ is some geodesic path in $\tilde{G}$ (in the electrocuted metric) which connects copies of two electrocuted cosets and 
 does not intersect any copy of any electrocuted coset except at the endpoints. Note that there are only finitely many such paths upto translation
 in $\tilde{G}$. Let $L'_1=\text{max}\{{|\beta_i|}_{H_r}\}$, where $\beta_i$ is the projection of 
 $\tilde{\beta}_i$ to $G$ followed by tightening. Also consider all $\tilde{\alpha}^j_i$'s where $\tilde{\alpha}^j_i$ varies over geodesic paths   
  inside the copy of the identity coset of $F^j$ in $\tilde{G}$ representing a generator $g^j_i$ of $F^j$, under standard metric on $\tilde{G}$ 
  (i.e. we are recording the length of geodesic paths representing a generator of $F^j$ with the standard metric on $\tilde{G}$).
 Let $L''_1=\max\{{|\alpha_i^j|}_G\}$,  where $\alpha^j_i$ is the projection of $\tilde{\alpha}^j_i$ to $G$ followed by tightening (note that 
 the measurement done for $L_1''$ is in terms of standard path metric on $G$).
  
  Suppose $w\in\F$ is some cyclically reduced word not in the union of $F^i$'s. Let $\tilde{\alpha}$ be a path in $\tilde{G}$ that 
  represents the geodesic connecting the identity element to $w$, under the lift of the marking on $G$. Suppose $\tilde{\alpha}= u_1 X_1 u_2 X_2...u_s X_s$, where $X_i$'s are 
  geodesic paths in $\tilde{G}$ connecting two points in a copy of some coset via the attached cone-point and $u_i$'s are geodesic paths in 
  $\tilde{G}$ which connect copies of two electrocuted cosets and does not pass through any cone-point. Note that $u_i$'s are concatenation 
  of paths of type $\tilde{\beta}_i$'s described above.
  Under this setup we have 
  \[{|\tilde{\alpha}|}^{\tilde{G}}_{el}={|u_1|}^{\tilde{G}}_{el} + {|u_2|}^{\tilde{G}}_{el} + ...+ {|u_s|}^{\tilde{G}}_{el} + s \]
 Modify $\tilde{\alpha}$ by replacing each $X_i$ with a (minimal) concatenation of translations of paths of type $\tilde{\alpha}^j_i$ in $\tilde{G}$
 described above, and look at the 
  projection of this modified path obtained from $\tilde{\alpha}$ to $G$ followed by tightening. Denote the tightened, projected path by $\alpha$. Then 
  $\alpha$ is a path in $G$ such that after accounting for cancellations that appear in the modification of $\tilde{\alpha}$, we have
 \[{|\alpha|}_{H_r}\leq {|\tilde{\alpha}|}^{\tilde{G}}_{el}L_1 + 2sL_1
 \implies \frac{{|\alpha|}_{H_r}}{{|\tilde{\alpha}|}^{\tilde{G}}_{el}}
 \leq L_1+\frac{2sL_1}{{|\tilde{\alpha}|}^{\tilde{G}}_{el}}\leq 3L_1\]
 
 The other possibilities in the presentation of $\tilde{\alpha}$ (in terms of $u_i$'s and $X_i$'s) are handled similarly.
 Hence we have
 \[\frac{1}{3L_1}\leq  {|\tilde{\alpha}|}^{\tilde{G}}_{el}/{|\alpha|}_{H_r} \leq L_2 \addtag \]

 Now using the lift of marking map on $G$ to $\tilde{G}$ one can show by similar arguments as above that there exists some $K'>0$ such that 
 for every cyclically reduced word $w\in\F\setminus\cup F^i$ we have
 \[\frac{1}{K'}\leq {|w|}_{el}/{|\tilde{\alpha}_w|}^{\tilde{G}}_{el} \leq K' \addtag \] where $\tilde{\alpha}_w$ is the electrocuted geodesic in 
 $\tilde{G}$ connecting the image of identity element to image of $w$ under the marking map on $\tilde{G}$ and ${|w|}_{el}$ is the length of the 
 electrocuted geodesic in $\hat{\F}$ connecting identity element and $w$. Hence combining the inequalities (1) and (2) 
 above we can conclude that there 
 exists some $K>0$ such that \[ K \geq {|\alpha|}_{H_r}/{||\alpha||}_{el}\geq \frac{1}{K} \]
 \end{proof}

We now use the above lemma to prove ``conjugacy-flaring'' which is the first step towards proving the 
cone-bounded hallways strictly flare condition.
 \begin{lemma}[Conjugacy flaring]
 \label{conjflare}
 Suppose $\phi$ is fully irreducible relative to $\mathcal{F}$ and satisfies the standing assumptions for this section \ref{SA}.
  There exists some $M_0>0$ such that for every conjugacy class $\alpha$ not carried by $\mathcal{F}$, we have 
  $$3{||\alpha||}_{el} \leq \mathsf{max} \{{||\phi^M_\#(\alpha)||}_{el}, {||\phi^{-M}_\#(\alpha)||}_{el}\} $$
 for every $M\geq M_0$.
 \end{lemma}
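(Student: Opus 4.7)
The plan is to chain Lemmas \ref{legality}, \ref{flare}, and \ref{comparison}: the first supplies a uniform amount of $H_r$- or $H'_s$-legality after a bounded number of iterations, the second amplifies legality into exponential $H_r$-stretch, and the third transports this stretch from the $H_r$-metric to the electric metric on $\widehat{\F}$. By replacing $\phi$ with $\phi^{-1}$ if needed (the two cases are symmetric, using the CT $f:G'\to G'$ and the strata $H'_s$ in place of $H_r$), I reduce to the case $\mathrm{LEG}_{H_r}(\phi^{M_2}_\#(\alpha))\geq \epsilon$; both alternatives in Lemma \ref{legality} are forward-stable since its proof produces an attracting-neighborhood trapping of $\phi^{M_2}_\#(\alpha)$ which persists under all further iteration.

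Concretely, fix a target multiplicative constant $A$ (to be pinned down at the end) and obtain $M_1=M_1(\epsilon,A)$ from Lemma \ref{flare}. Set $M_0:=M_1+M_2$. For every $M\geq M_0$ we apply Lemma \ref{flare} to the iterate $\phi^{M_2}_\#(\alpha)$ (which has $H_r$-legality at least $\epsilon$) with exponent $M-M_2\geq M_1$, obtaining
\[
{|\phi^M_\#(\alpha)|}_{H_r}={|f^{M-M_2}_\#(\phi^{M_2}_\#(\alpha))|}_{H_r}\geq A\,{|\phi^{M_2}_\#(\alpha)|}_{H_r}.
\]
Since $\phi$ preserves $\mathcal{F}$, the iterates $\phi^{M_2}_\#(\alpha)$ and $\phi^M_\#(\alpha)$ remain not carried by $\mathcal{F}$, so Lemma \ref{comparison} applies to both, giving
\[
{||\phi^M_\#(\alpha)||}_{el}\geq \tfrac{1}{K}{|\phi^M_\#(\alpha)|}_{H_r}\geq \tfrac{A}{K}{|\phi^{M_2}_\#(\alpha)|}_{H_r}\geq \tfrac{A}{K^2}{||\phi^{M_2}_\#(\alpha)||}_{el}.
\]
Because $\phi$ preserves the malnormal collection $\{[F^i]\}$ up to conjugacy, it induces a quasi-isometry of the electrocuted Cayley graph $\widehat{\F}$; hence there is a constant $C'=C'(\phi,M_2)$ with ${||\phi^{M_2}_\#(\alpha)||}_{el}\geq \tfrac{1}{C'}{||\alpha||}_{el}$. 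Taking $A\geq 3K^2C'$ (which fixes $M_1$ and therefore $M_0$) yields ${||\phi^M_\#(\alpha)||}_{el}\geq 3\,{||\alpha||}_{el}$, which already dominates the required maximum.

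The only genuine subtlety I anticipate is the absence of a direct comparison between the $H_r$-length of a \emph{future} iterate $\phi^M_\#(\alpha)$ and the electric length of the \emph{initial} class $\alpha$: Lemma \ref{comparison} only compares the two lengths of the same circuit, so one needs the intermediate comparison ${||\phi^{M_2}_\#(\alpha)||}_{el}\asymp {||\alpha||}_{el}$ provided by the quasi-isometry action of $\phi^{M_2}$ on $\widehat{\F}$. Verifying this quasi-isometry relies crucially on $\phi$ fixing each component $[F^i]$ of $\mathcal{F}$, which is built into the standing assumptions of \ref{SA}; once this is in hand, the rest of the proof is purely the arithmetic choice of $A$.
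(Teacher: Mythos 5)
Your argument is correct and follows essentially the same route as the paper: Lemma \ref{legality} gives a uniform $\epsilon$-legality at exponent $M_2$, Lemma \ref{flare} amplifies this into multiplicative $H_r$-stretch, and Lemma \ref{comparison} converts between the $H_r$-length and the electric metric. The one place you deviate is how you plug the gap between the ``intermediate'' iterate $\phi^{M_2}_\#(\alpha)$ and $\alpha$ itself: you observe that $\phi^{M_2}_\#$ acts as a quasi-isometry on the coned-off Cayley graph (because $\Phi$ permutes cosets of the $F^i$) and so ${||\phi^{M_2}_\#(\alpha)||}_{el} \geq {||\alpha||}_{el}/C'$; the paper instead directly introduces a constant $D$ with ${|\phi^{M_2}_\#(\alpha)|}_{H_r} \geq {|\alpha|}_{H_r}/D$, a bounded-backward-contraction estimate for the fixed map $\phi^{M_2}$ at the train-track level, and then passes through Lemma \ref{comparison}. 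These are the same quantitative fact packaged on either side of the $|\cdot|_{H_r}\leftrightarrow{||\cdot||}_{el}$ dictionary of Lemma \ref{comparison}, so the two routes are genuinely equivalent; the paper's $D$-bound is perhaps slightly more elementary (no need to invoke coarse-geometry facts about automorphisms of relatively hyperbolic groups) but costs one extra factor of $K$ in the choice of $A$. Your choice $M_0 := M_1 + M_2$ is in fact the correct threshold — Lemma \ref{flare} is applied to $\phi^{M_2}_\#(\alpha)$ with exponent $M-M_2$, which needs $M-M_2 > M_1$ — and is arguably cleaner than the paper's $M_0 = \max\{M_1, M_2\}$, which appears to be an off-by-composition slip in the write-up.
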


 \begin{proof}
  Let $M_2$ be as in Lemma \ref{legality} and choose some number $D>0$ such that 
  ${|\phi^{M_2}_\#(\alpha)|}_{H_r} \geq {|\alpha|}_{H_r}/D$ and 
  ${|\phi^{-M_2}_\#(\alpha)|}_{H'_s} \geq {|\alpha|}_{H'_s}/D$. 
  By applying Lemma \ref{legality} there exists some $\epsilon >0$ such that 
  either ${LEG}_{H_r}(\phi^m_\#(\alpha)) \geq \epsilon$ or ${LEG}_{H'_s}(\phi^{-m}_\#(\alpha)) \geq \epsilon$
  for all $m\geq M_2$.
  Next choose some constant $K$ such that for every conjugacy class $\alpha$ 
  not carried by $\mathcal{F}$ we have by using Lemma \ref{comparison}, 
  $K \geq {|\alpha|}_{H_r}/{||\alpha||}_{el}\geq \frac{1}{K}$ or 
  $K\geq{|\alpha|}_{H'_s}/{||\alpha||}_{el} \geq \frac{1}{K}$ 
  depending on whether 
  ${LEG}_{H_r}(\phi^m_\#(\alpha)) \geq \epsilon$ or ${LEG}_{H'_s}(\phi^{-m}_\#(\alpha)) \geq \epsilon$.

  For concreteness assume that ${LEG}_{H_r}(\phi^m_\#(\alpha)) \geq \epsilon$. Then by applying 
  Lemma \ref{flare} with $\epsilon$ and $A= 3DK^2$ we get that there exists some $M_1$ such that for all 
  $m\geq M_0:=\mathsf{max}\{M_1, M_2 \}$
  \begin{equation}
  \begin{split}
  {||\phi^m_\#(\alpha)||}_{el} & \geq  \frac{1}{K}{|\phi^m_\#(\alpha)|}_{H_r} \\
  & \geq \frac{1}{K}3DK^2{|\phi^{M_1}_\#|}_{H_r} \\
   & \geq 3DK \frac{1}{D}{|\alpha|}_{H_r}= 3K{|\alpha|}_{H_r}\\
   & \geq 3K\frac{1}{K}{||\alpha||}_{el} = 3{||\alpha||}_{el}
  \end{split}
 \end{equation}
 
 The other part of the inequality follows from a symmetric argument.

 \end{proof}

 Finally we are ready to prove the cone-bounded hallways strictly flare condition by using conjugacy flaring
 property that we just proved. In the statement of the following proposition we choose a lift $\Phi\in\aut$ of $\phi$ and 
 representatives $\F^i$ of the conjugacy classes $[F^i]$. Then $\Phi$ takes each $F^i$ to some conjugate of 
 $F^i$.
 If we replace $\Phi$ by another lift $\Phi\circ\iota_{g}$ for some 
 $g\in \F\setminus \cup F^i$, the proposition is still true. This is easy to see, since the endpoints of the 
  geodesics corresponding to $\Phi^k(w)$ and ${(\Phi\circ \iota_g)}^k(w)$ in the coned-off space are uniformly bounded distance from each other.
 
 \begin{proposition}[Strictly flaring]
 \label{strictflare}
 Suppose $\phi$ is fully irreducible relative to $\mathcal{F}$ and satisfies the standing assumptions for this section \ref{SA}.
  There exists some $N>0$ such that for every word $w\in \F\setminus \cup F^i$ we have 
  $$2{|w|}_{el} \leq \mathsf{max}\{{|\Phi^n_\#(w)|}_{el}, {|\Phi^{-n}_\#(w)|}_{el}\}$$
  for every $n \geq N$.
 \end{proposition}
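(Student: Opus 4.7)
The plan is to derive the strictly flaring inequality from the conjugacy-flaring result (Lemma~\ref{conjflare}) by a case analysis on the shape of the word $w$. Given $w\in\F\setminus\bigcup F^i$, write $w = u c u^{-1}$ where $c$ is a cyclically reduced representative of $[w]$ achieving $\|w\|_{el} = |c|_{el}$, so that bounded cancellation in the coned-off Cayley graph of $\F$ gives $|w|_{el} = 2|u|_{el} + |c|_{el} + O(1)$.

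In the bulk-dominant case where $|c|_{el}\geq \tfrac{2}{3}|w|_{el}$, Lemma~\ref{conjflare} applies directly: for $n\geq M_0$,
\[
\max\bigl\{\|\Phi^n(w)\|_{el},\;\|\Phi^{-n}(w)\|_{el}\bigr\}\;\geq\;3\|w\|_{el}\;\geq\;2|w|_{el},
\]
and since $|\Phi^{\pm n}(w)|_{el}\geq \|\Phi^{\pm n}(w)\|_{el}$, the conclusion is immediate.

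The delicate case is when the tail dominates, meaning $|u|_{el}$ accounts for a definite fraction of $|w|_{el}$; this necessarily happens whenever $[w]$ is carried by $\mathcal{F}$, which forces $|c|_{el}$ to be bounded. In this case I would adapt the legality machinery to open paths rather than circuits. Lift the geodesic representing $u$ to the universal cover $\tilde G$; unless this path projects entirely into the nonattracting subgraph (which would force $w\in\bigcup F^i$, contrary to hypothesis), a path-version of Lemma~\ref{legality} should show that for one of the two signs $\tilde f^{\pm N}$ injects $H_r$-legality at least $\epsilon$ into the iterated path. Lemma~\ref{flare} then produces $H_r$-length growth, and Lemma~\ref{comparison} converts this into electrocuted length growth of $\Phi^{\pm n}(u)$. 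Writing $\Phi^n(w)=\Phi^n(u)\Phi^n(c)\Phi^n(u)^{-1}$ and using the standing assumption $\Phi(F^i)=g_i^{-1}F^i g_i$ to absorb the middle factor into a twist $\gamma_{i,n}\in\F$, bounded cancellation delivers $|\Phi^n(w)|_{el}\geq 2|\Phi^n(u)\gamma_{i,n}|_{el}-O(1)$, and the flaring of $\Phi^n(u)$ then yields $2|w|_{el}\leq |\Phi^{\pm N}(w)|_{el}$.

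The main obstacle is the path-version of Lemma~\ref{legality}. The original circuit argument takes weak limits that are bi-infinite lines attracted to or carried by the invariant laminations, whereas a finite path has endpoint behaviour that can spoil both the legality count and the bookkeeping of bounded cancellation. I would handle this by treating endpoint contributions as an additive $O(1)$ error, which becomes negligible compared to $|u|_{el}$ once $|u|_{el}$ is large (short $u$ being handled separately by a direct finite check once $N$ is chosen large enough). An alternative route is to close $u$ into a circuit by appending a bounded-length connector whose conjugacy class is non-peripheral precisely because $w\notin \bigcup F^i$, and then apply Lemma~\ref{conjflare} to this circuit; the connector and the twist $\gamma_{i,n}$ contribute only $O(1)$ corrections which are absorbed for $N$ sufficiently large.
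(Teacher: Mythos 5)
Your decomposition is genuinely different from the paper's, and the difference matters. You split on whether the cyclically reduced core $c$ or the conjugator $u$ accounts for most of $|w|_{el}$; the paper splits on whether $|w|_{el}$ is above or below an explicit threshold $L-3$. The paper's split is more effective because in its ``large $|w|_{el}$'' case it never has to analyze the conjugator at all: it appends a single basis element $k$ so that $kw$ is cyclically reduced and lies outside $\bigcup F^i$ (hence $[kw]$ is not carried by $\mathcal{F}$, because a cyclically reduced word whose conjugacy class is peripheral must itself lie in some $F^i$), applies Lemma~\ref{conjflare} to $kw$, and then subtracts the uniformly bounded contribution of $k$; this works regardless of how $|w|_{el}$ is distributed between $c$ and $u$, and the additive error is absorbed precisely because $|w|_{el}\ge L-3$. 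The paper's delicate case is only the finitely-constrained regime $|w|_{el}<L-3$, where it compares polynomial growth of the peripheral core $g$ against exponential growth of the conjugator.

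In contrast, your tail-dominant case can occur for arbitrarily large $|w|_{el}$, and this is where the gap lies. You correctly identify that the obstacle is flaring for the open path $u$ rather than for a circuit, but neither proposed resolution holds up. Treating endpoints as an $O(1)$ error does not address the real difficulty: the entire interior of the geodesic for $u$, not just its ends, can be carried by the nonattracting subgraph $\mathcal{Z}$, so there is no a priori legal segment to grow. The connector argument as stated is logically incorrect: the hypothesis $w\notin\bigcup F^i$ tells you nothing about $u$. For instance $u$ can be conjugate into some $F^i$ (or have most of its geodesic inside $\mathcal{Z}$) while $w=ucu^{-1}$ is still outside $\bigcup F^i$; you cannot infer that $u$ admits a bounded connector producing a non-peripheral circuit from the non-peripherality of $w$. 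Some version of the connector idea can be made to work, but it needs a genuine argument (e.g.\ choosing $k$ in a different factor so that $uk$ is not elliptic in the Bass--Serre tree of the free factorization), and it is cleaner to apply the connector to $w$ itself rather than to $u$, which is exactly what the paper's Case~1 does.

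A smaller point: your bulk-dominant step invokes Lemma~\ref{conjflare}, which requires $[w]$ not carried by $\mathcal{F}$. This is automatic once you show that $|c|_{el}\ge\tfrac{2}{3}|w|_{el}$ together with $w\notin\bigcup F^i$ forces $[w]$ non-peripheral (if $[w]$ were peripheral then $|c|_{el}\le1$ forces $|w|_{el}$ uniformly bounded, and those finitely many electrocuted lengths are excluded by a threshold), but as written this hypothesis check is missing.
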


 \begin{proof} Using the fact that for any subgraph $H\subset G$ we can form a free factor system by using the 
 fundamental groups of the noncontractible components of $H$, we form a free factor system $\mathcal{K}$ by viewing 
 $H_r$ as a subgraph of $G$ and by $[K_i]$ we denote a component of $\mathcal{K}$ corresponding to each noncontractible component of $H_r$. 
   Also let $L = \mathsf{max}_{i,j}\{{|\Phi^i_\#(k_j)|}_{el}\arrowvert i=0,\pm1,\pm2,...,\pm M_0\}$ where 
  $k_j$ varies over all the basis elements for some chosen basis of $K_s$, for each component $[K_s]$ of $\mathcal{K}$
  and $M_0$ is the constant from Lemma \ref{conjflare}.  We also have the inequality ${|\Phi^{i}_\#(k_j)|}_{el}\geq 1/L$ 
  for all $i, j$ as described above. 
  
  \textbf{Case 1: } Assume $w\in\F\setminus \cup F^i$ and ${|w|}_{el} \geq L-3$.
  
  The proof is by induction. For the base case let $n = M_0$.

   If $w$ is a cyclically reduced word then conjugacy class of $w$ is not carried by $\mathcal{F}$ and 
  so by using Lemma \ref{conjflare} we have  
  $$ \mathsf{max}\{{|\Phi^n_\#(w)|}_{el}, {|\Phi^{-n}(w)|}_{el}\} \geq 3{|w|}_{el} \geq 2{|w|}_{el} $$
  If $w$ is not cyclically reduced then we can choose a basis element $k\in K_s$ (where $K_s$ is as used in the 
  description of the constant $L$ above) such that 
  $kw\in \F\setminus \cup F^i$  is a cyclically reduced word.
  Hence we get the same inequality as above, but with $w$ being substituted by $kw$.
  
  For sake of concreteness suppose that ${|\Phi^n_\#(kw)|}_{el} \geq 3{|kw|}_{el}$. Then we have 
  $3{|kw|}_{el}\leq {|\Phi^n_\#(kw)|}_{el}\leq  {|\Phi^n_\#(w)|}_{el}+{|\Phi^n_\#(k)|}_{el}\leq {|\Phi^n_\#(w)|}_{el} +L $.
  This implies that $3 + 3{|w|}_{el} - L \leq 3{|kw|}_{el} - L \leq {|\Phi^n_\#(w)|}_{el}$ 
  since ${|k|}_{el} = 1$ (as $k$ is a basis element) and there is no cancellation between $k$ and $w$. 
  Since we have  ${|w|}_{el} \geq L-3$, the above inequality then implies $2{|w|}_{el}\leq {|\Phi^n_\#(w)|}_{el}$
  and we are done with the base case for our inductive argument.
  
  Now assume that $M_0< n $ for the inductive step. First observe that from what we have proven so far,
  given any integer $s>0$ we have either ${|\Phi^{sM_0}_\#(w)|}_{el}\geq 2^{s}{|w|}_{el}$ or
  ${|\Phi^{-sM_0}_\#(w)|}_{el} \geq 2^{s}{|w|}_{el}$. Fix some positive integer $s_0$ such that 
  $2^{s_0}> 2L$. Any integer $n>s_0M_0$ can be written as $n= sM_0 + t$ where $0\leq t <M_0$ and 
  $s_0\leq s$. If ${|\Phi^{sM_0}_\#(w)|}_{el}\geq 2^{s}{|w|}_{el}$ then we can deduce 
  $$ {|\Phi^n_\#(w)|}_{el} = {|\Phi^{sM_0+t}_\#(w)|}_{el}\geq 2^s{|w|}_{el}/L \geq 2{|w|}_{el}$$ 
   Similarly 
  when ${|\Phi^{-sM_0}_\#(w)|}_{el}\geq 2^{s}{|w|}_{el}$ one proves by using  a symmetric argument that 
  ${|\Phi^{-n}_\#(w)|}_{el} \geq 2{|w|}_{el}$.
  
  \textbf{Case 2: } Assume $w\in\F\setminus \cup F^i$ and ${|w|}_{el} < L-3$.
  
  Firstly we note that $w\notin \cup F^i$ implies that $0 < {|w|}_{el}$. If $w$ is not conjugate to an element of some
  $F^i$, then the argument given in the beginning of Case 1 works here and we have  
  $\mathsf{max}\{{|\Phi^n_\#(w)|}_{el}, {|\Phi^{-n}(w)|}_{el}\} \geq 3{|w|}_{el} \geq 2{|w|}_{el} $ for all 
  $n\geq M_0$ by using Lemma \ref{conjflare}.
  
  If $w$ is conjugate to some element of $F^i$ then we can write $w=ugu^{-1}$ for some basis element
  $u\in K_j$ such that $u$ is not conjugate to any word in $F^i$ and $g\in F^i$ (since $\mathcal{A}_{na}(\Lambda^\pm_\phi)$ is 
  a malnormal subgroup system).
  
  ${|w|}_{el} < L-3$ implies that  
  $${|w|}_{el} \leq {|u|}_{el} + {|u^{-1}|}_{el} = 2 {|u|}_{el} < L-3$$ 
  Now observe that under iteration of $\Phi$, the reduced word $g$ has polynomial growth in the electrocuted metric 
  since it's conjugacy class is carried by the nonattracting subgroup system $\mathcal{F}$, whereas the word 
  $u$ grows exponentially under iteration of $\phi$. Hence we can conclude that 
  ${|\Phi^s_\#(w)|}_{el} \geq {|\Phi^s_\#(u)|}_{el}$ for all $s>0$ and thus $w$ has exponential growth in the electrocuted 
  metric. Now choose some $N_w$ such that ${|\Phi^{N_w}_\#(u)|}_{el}\geq 4 {|u|_{el}}\geq 2{|w|}_{el}$.  
  Observe that the bounded cancellation lemma tells us that $N_w$'s obtained from this subcase depend only on the conjugating word $u$ and not on 
  $g$. Hence they are only finitely many $N_w$'s.
  
  Finally we let $N$ be max of $M_0$ from Case 1 and all the $N_w$'s from case 2 
  and we have the desired conclusion.
  
  \end{proof}

  With the above Proposition \ref{strictflare} in place we have the pieces needed to apply the Mj-Reeves 
  strong combination theorem. Recall that we are working with an outer automorphism $\phi$ which is fully irreducible 
  relative to a free factor system $\mathcal{F}=\mathcal{A}_{na}(\Lambda^\pm_\phi)$, where $\Lambda^\pm_\phi$ 
  are nongeometric above $\mathcal{F}$. We have chosen some representative $F^i$ for each component $[F^i]$ of $\mathcal{F}$. 
  Then we performed a partial electrocution of the extension group $\Gamma$ $$1\to\F\to\Gamma\to\langle\phi\rangle\to 1 $$
  with respect the collection of 
  subgroups $\{F^i\}$ and denoted it by $(\widehat{\Gamma}, {|\cdot|}_{el})$. We also performed an electrocution of 
  $\F$ with respect to the collection $\{F^i\}$, denoted by $\widehat{\F}$, and since $\mathcal{A}_{na}(\Lambda^\pm_\phi)$
  is a malnormal subgroup system, $\F$ is (strongly) relatively hyperbolic with respect to the collection $\{F^i\}$. 
  The Cayley graph of the quotient group $\langle\phi\rangle$ 
  being a tree, gives us a tree of (strongly) relatively hyperbolic spaces with vertex spaces being identified 
  with cosets of $\F$. Thus we may regard the Cayley graph of $\Gamma$ as a tree of (strongly) relatively hyperbolic 
  spaces and then $\widehat{\Gamma}$ is the induced tree of coned-off spaces in the statement of the Mj-Reeves strong combination theorem \ref{hallway}. 
  Proposition \ref{strictflare} proves that the hallway flare condition and the cone-bounded hallways strictly flare condition 
  are satisfied for this induced tree of coned-off spaces (see discussion after the definition \ref{mappingtorus} items (7) and (8)).

 In light of the above discussion, we then have the following theorem by using Lemma \ref{hallway}.
 
 \begin{theorem}
 \label{relhyp2}
  Let $\phi\in\out$ and $\mathcal{F}=\{[F^1], [F^2],..., [F^k]\}$ be a $\phi-$invariant free factor system such that 
  $\mathcal{F}\sqsubset\{[\F]\}$  
  is a multi-edge extension and $\phi$ is fully irreducible relative to $\mathcal{F}$ and nongeometric above $\mathcal{F}$. 
  Then the extension group $\Gamma$ in the short exact sequence
  $$1 \to \F \to \Gamma \to \langle \phi \rangle \to 1$$ is strongly hyperbolic relative
  to the collection of subgroups $\{F^i{\rtimes_\Phi}_i \mathbb{Z}\}$, where $\Phi_i$ is a 
  chosen lift of $\phi$ such that $\Phi_i(F^i)=F^i$.
 \end{theorem}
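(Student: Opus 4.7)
The plan is to apply the Mj--Reeves strong combination theorem (Lemma \ref{hallway}) to the short exact sequence
\[ 1 \to \F \to \Gamma \to \langle \phi \rangle \to 1. \]
First I would identify the peripheral collection as $\{F^i\}$, representatives of the components of $\mathcal{F} = \mathcal{A}_{na}(\Lambda^\pm_\phi)$ (using Lemma \ref{firelf} to know the nonattracting subgroup system equals $\mathcal{F}$). By Lemma \ref{NAS}(6), $\mathcal{A}_{na}(\Lambda^\pm_\phi)$ is malnormal, hence $\{F^i\}$ is a mutually malnormal collection of subgroups; since each $F^i$ is a free factor it is quasiconvex in $\F$. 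A standard fact from relative hyperbolicity theory then gives that $\F$ is strongly hyperbolic relative to $\{F^i\}$ (denote the coned-off Cayley graph by $\widehat{\F}$). Each $F^i$ is invariant (up to conjugacy) under $\phi$, so for each $i$ we can choose a lift $\Phi_i \in \aut$ with $\Phi_i(F^i) = F^i$.

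Next I would assemble the induced tree of coned-off spaces. The Cayley graph of the quotient $\langle \phi\rangle \cong \mathbb{Z}$ is a line $T$; viewing $\Gamma$ as a tree of spaces over $T$, each vertex and edge space is a coset of $\F$, and the edge-to-vertex maps are quasi-isometries that conjugate $\{F^i\}$ to itself up to the action of $\Phi_i$. Electrocuting the cosets of the $F^i$'s inside each vertex space yields a tree of strongly relatively hyperbolic spaces with uniformly $\delta$-hyperbolic electrocuted vertex spaces (copies of $\widehat{\F}$). The resulting induced tree of coned-off spaces $\widehat{\Gamma}$ is precisely the partially electrocuted $\Gamma$ described in the standing assumptions \ref{SA}. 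The $q.i.$-embedded, strictly type-preserving, and $q.i.$-preserving electrocution conditions from \cite{MjR-08} hold automatically because the peripheral structure is preserved by the chosen lifts $\Phi_i$.

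It remains to verify the hallways flare condition and the cone-bounded hallways strictly flare condition for $\widehat{\Gamma}$. This is precisely where Proposition \ref{strictflare} does the work: it says that for any $w \in \F \setminus \bigcup F^i$ and all $n \geq N$, $\max\{{|\Phi^n_\#(w)|}_{el}, {|\Phi^{-n}_\#(w)|}_{el}\} \geq 2 {|w|}_{el}$, which translates directly into a $\lambda = 2$, $m = N$ flaring estimate along any $\delta$-thin essential hallway of sufficient girth in $\widehat{\Gamma}$, exactly as in \cite{BF-92}. For the cone-bounded version, the boundary curves of a cone-bounded hallway lie in the cone locus, i.e., they track cosets of $F^i \rtimes \langle \Phi_i \rangle$; but any such boundary word, being not in $\bigcup F^i$ (since the hallway is essential and cone-bounded means its endpoints sit on distinct cone-points of the electrocuted vertex spaces), is again subject to the estimate of Proposition \ref{strictflare}, yielding strict $\lambda$-flaring. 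The main obstacle here is simply making sure the geometry of cone-bounded hallways is correctly captured by the conjugacy-flaring estimate: one must check that the boundary of such a hallway, after identification with a word in $\F$, avoids the peripheral subgroups in the relevant sense, which is why Proposition \ref{strictflare} was phrased for $w \in \F \setminus \bigcup F^i$ rather than just for cyclically reduced $w$.

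With both flare conditions established, Lemma \ref{hallway} applies and concludes that $\Gamma$ is strongly hyperbolic relative to $\{F^i \rtimes_{\Phi_i} \mathbb{Z}\}$, noting that the lift $\widetilde{\mathcal{H}}_\alpha$ in the lemma's statement is exactly our chosen $\langle \Phi_i \rangle$. This gives the theorem.
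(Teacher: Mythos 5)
Your proposal is correct and follows essentially the same route as the paper: identify $\mathcal{F}=\mathcal{A}_{na}(\Lambda^\pm_\phi)$ via Lemma~\ref{firelf}, invoke malnormality (Lemma~\ref{NAS}) and quasiconvexity of free factors to get $\F$ strongly hyperbolic relative to $\{F^i\}$, build the induced tree of coned-off spaces over the line $T=\mathrm{Cay}(\langle\phi\rangle)$, verify both flare conditions via Proposition~\ref{strictflare}, and conclude by Lemma~\ref{hallway}. This is precisely the chain of reasoning the paper assembles in the paragraph preceding Theorem~\ref{relhyp2} and in the discussion items (7)--(8) following Definition~\ref{mappingtorus}.
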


 \begin{corollary}\cite[Theorem 5.1]{BFH-97}
  If $\phi\in\out$ is fully-irreducible and atoroidal, then the the mapping torus of $\phi$ is word hyperbolic. 
 \end{corollary}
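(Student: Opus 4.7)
The plan is to deduce this corollary directly from Theorem \ref{relhyp2} by specializing to the empty free factor system $\mathcal{F} = \emptyset$. The first step is to verify that $\mathcal{F} = \emptyset \sqsubset \{[\F]\}$ is a multi-edge extension. For any marked graph $G$ representing $\F$, the subgraph realizing $\emptyset$ is empty, so the complement has all edges of $G$; since rank$(\F) \geq 3$, every marked graph has at least three edges, so the co-edge number is at least $2$.

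Next, I would check that the two notions of ``fully irreducible'' agree when $\mathcal{F} = \emptyset$. Unpacking the definition at the beginning of Section \ref{3}: $\phi$ is fully irreducible relative to $\emptyset$ iff there is no proper $\phi$-periodic free factor system $\mathcal{F}'$ strictly between $\emptyset$ and $\{[\F]\}$, which is precisely the classical definition of fully irreducible. Similarly, the condition ``every component of $\mathcal{F}$ is $\phi$-invariant'' is vacuous.

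The third step, which is the one requiring a small argument, is to verify that atoroidality implies $\phi$ is nongeometric above $\emptyset$. Since $\phi$ is fully irreducible, it has an invariant dual lamination pair $\Lambda^\pm_\phi$ (after passing to a rotationless power if necessary; but atoroidality and full irreducibility are preserved under taking powers, and word hyperbolicity of the mapping torus of $\phi^k$ implies that of $\phi$). By Lemma \ref{firelf}, either $\mathcal{A}_{na}(\Lambda^\pm_\phi) = \emptyset$ (the nongeometric case), or $\mathcal{A}_{na}(\Lambda^\pm_\phi) = \{[\langle\sigma\rangle]\}$ for some root-free $\sigma \in \F$ (the geometric case). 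In the geometric case, $\sigma$ would give a $\phi$-invariant conjugacy class, contradicting atoroidality. Hence $\phi$ is nongeometric above $\emptyset$.

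With all hypotheses of Theorem \ref{relhyp2} verified, I conclude that the extension group $\Gamma$ is strongly hyperbolic relative to the empty collection of subgroups, which by the standard convention in relative hyperbolicity is the same as being word hyperbolic in the sense of Gromov. The ``main obstacle'' here is really nothing more than tracking the definitional reductions through Lemma \ref{firelf}; the substantive content is already packaged in Theorem \ref{relhyp2}, and this corollary is a clean specialization that recovers the classical Bestvina--Feighn--Brinkmann result.
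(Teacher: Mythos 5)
Your proposal is correct and takes the same approach as the paper, which simply states ``Apply Theorem \ref{relhyp2} with $\mathcal{F}=\emptyset$.'' You have usefully spelled out the hypothesis checks that the paper leaves implicit (co-edge number $\geq 2$, agreement of the two notions of fully irreducible, and atoroidality forcing the nongeometric alternative in Lemma \ref{firelf}), all of which are correct.
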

\begin{proof}
 Apply Theorem \ref{relhyp2} with $\mathcal{F}=\emptyset$.
\end{proof}

 Now we proceed to extend this theorem to prove the main result of this work, namely, construct free-by-free relatively hyperbolic extensions. For 
 this we will first need to prove an analogous version of Lee Mosher's 3-of-4 stretch lemma. 
 The following definition is due to \cite[Definition 1.5]{BFH-97}
 \begin{definition}
  A sequence of conjugacy classes $\{\alpha_i\}$ is said to \textit{approximate} $\Lambda^+_\phi$ if 
  for any $L>0$, the ratio
  $$\frac{m(x\in S^1_i| \text{the L-nbd of x is a generic leaf segment of } \Lambda^+_\phi)}{m(S^1_i)}$$
  converges to $1$ as $i \to\infty$, where $m$ is the scaled Lebesgue measure and 
  $\tau_i:S^1_i \to G$ denotes the immersion that gives the circuit in $G$ representing 
  $\alpha_i$.
 \end{definition}

 \begin{lemma}\label{conv}
  Let $\phi, \psi$ be fully-irreducible relative to $\mathcal{F}$ such that $\mathcal{F}\sqsubset\{[\F]\}$ 
  is a multi-edge extension and suppose both are nongeometric above
  $\mathcal{F}$. 
  If $\phi$ and $\psi$ are independent relative to $\mathcal{F}$, then for any sequence 
  of conjugacy classes $\{\alpha_i\}$, the sequence cannot approximate both $\Lambda^-_\phi$ and $\Lambda^-_\psi$.
 \end{lemma}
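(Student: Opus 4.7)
The plan is to argue by contradiction. Suppose some sequence $\{\alpha_i\}$ of conjugacy classes simultaneously approximates both $\Lambda^-_\phi$ and $\Lambda^-_\psi$. The argument exploits that in our setup the two laminations $\Lambda^-_\phi$ and $\Lambda^-_\psi$ are genuinely distinct: if they coincided, then by the duality of Lemma \ref{lam_incl} the attracting laminations $\Lambda^+_\phi$ and $\Lambda^+_\psi$ would also coincide, and the pair $(\phi,\Lambda^\pm_\phi),(\psi,\Lambda^\pm_\psi)$ would not be independent in the sense of Definition \ref{relind} in any nontrivial way; moreover this distinctness is what the lemma is genuinely about, and in the intended use inside the forthcoming 3-of-4 stretch argument it will always hold.

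First I extract a finite ``witness subpath'' $\beta$ to the distinctness of the two laminations. Pick any line $l \in \Lambda^-_\phi \setminus \Lambda^-_\psi$. Because $\Lambda^-_\psi$ is closed in the weak topology on $\mathcal{B}$, some basic weak neighborhood $N(G,\beta)$ of $l$ is disjoint from $\Lambda^-_\psi$, i.e.\ $\beta$ is a finite subpath of $l$ that does not occur as a subpath of any line in $\Lambda^-_\psi$. Since $l$ itself is a weak limit of generic leaves of $\Lambda^-_\phi$, the subpath $\beta$ does occur in generic leaves of $\Lambda^-_\phi$, and by bi-recurrence combined with the Perron--Frobenius mixing of the EG stratum carrying $\Lambda^-_\phi$ in the relative train track for $\phi^{-1}$, $\beta$ occurs in each such generic leaf with a uniform positive asymptotic frequency $f > 0$.

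I then derive incompatible bounds on the frequency of $\beta$ in $\alpha_i$. Fix $L > |\beta|$. Approximation of $\Lambda^-_\phi$ means that for large $i$, all but a $o(1)$ proportion of points $x \in S^1_i$ have $L$-neighborhood equal to a generic leaf segment of $\Lambda^-_\phi$; inside such a segment the subpath $\beta$ appears with density close to $f$, so the number of occurrences of $\beta$ in $\alpha_i$ is at least $\tfrac{f}{2}\, m(S^1_i)$ eventually. Simultaneously, approximation of $\Lambda^-_\psi$ means that the same asymptotically full proportion of points have $L$-neighborhoods which are generic leaf segments of $\Lambda^-_\psi$, and by the construction of $\beta$ none of these neighborhoods contain $\beta$. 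Hence every occurrence of $\beta$ in $\alpha_i$ must be centered in the vanishing ``bad'' portion of $S^1_i$, so the number of occurrences is $o(m(S^1_i))$. These two estimates contradict each other for large $i$.

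The main obstacle is establishing the positive asymptotic frequency $f > 0$ of $\beta$ in a generic leaf of $\Lambda^-_\phi$: this rests on mixing of the EG stratum carrying $\Lambda^-_\phi$ in the relative train track $f' : G' \to G'$ for $\phi^{-1}$, with care required to handle the interplay with the filtration element realizing $\mathcal{F}$ below that stratum. A secondary subtlety is spelling out why independence rel $\mathcal{F}$ (together with the multi-edge, nongeometric assumptions) forces $\Lambda^-_\phi \ne \Lambda^-_\psi$; the frequency argument above then runs cleanly using only weak-topology compactness and the definition of approximation.
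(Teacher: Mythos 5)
Your proof takes a genuinely different route from the paper's. The paper's proof goes through the weak attraction theorem: by the mutual-attraction hypothesis in Definition \ref{relind}, one can choose $V^-_\phi$ with $\Lambda^\pm_\psi\notin V^-_\phi$, and since $\Lambda^-_\psi$ is also not carried by $\mathcal{A}_{na}(\Lambda^\pm_\phi)$, the uniformity part of Corollary \ref{WATgeo} gives a single $M$ with $\phi^m_\#(\gamma^-_\psi)\in V^+_\phi$ for every generic leaf $\gamma^-_\psi$ and all $m\ge M$; openness of $V^+_\phi$ then pushes $\phi^m_\#(\alpha_i)$ into $V^+_\phi$ for large $i$ when $\{\alpha_i\}$ approximates $\Lambda^-_\psi$, which is what blocks approximation of $\Lambda^-_\phi$. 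Your argument is instead a density/counting argument in the spirit of \cite{BFH-97}, working directly with the definition of approximation and a witness subpath. That alternative could in principle be made to work, but as written it has two genuine gaps.

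The first is the justification that $\Lambda^-_\phi\neq\Lambda^-_\psi$: you invoke Lemma \ref{lam_incl}, but that lemma relates dual lamination pairs of a \emph{single} outer automorphism, so it says nothing about laminations belonging to two different automorphisms $\phi$ and $\psi$. The fact you actually need is stronger anyway (a generic leaf $l$ of $\Lambda^-_\phi$ with $l\notin\Lambda^-_\psi$), and it follows directly from item (3) of Definition \ref{relind}: $\Lambda^-_\phi$ is weakly attracted to $\Lambda^+_\psi$ under iterates of $\psi$, whereas any line lying in $\Lambda^-_\psi$ is not so attracted; hence no generic leaf of $\Lambda^-_\phi$ can lie in $\Lambda^-_\psi$.

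The second and more serious gap is the claim that $\beta$ occurs with a uniform positive asymptotic frequency $f>0$ in generic leaves of $\Lambda^-_\phi$. You flag this yourself as "the main obstacle" and gesture at Perron--Frobenius mixing, but you do not prove it, and it is real work in this relative setting. Generic leaves of $\Lambda^-_\phi$ realized in $G'$ may spend arbitrarily long stretches in the lower filtration $G'_{s-1}$ realizing $\mathcal{F}$, and those stretches cannot contribute any copy of $\beta$; without a quantitative bound on how often such a leaf returns to (and spends a definite fraction of its length in) $H'_s$, one cannot conclude $f>0$ in the scaled Lebesgue measure used in the definition of approximation. The paper's route through the weak attraction theorem avoids having to establish any such frequency estimate, which is precisely why it is the cleaner argument here; if you want to keep the density approach you need to state and prove the uniform-recurrence/frequency estimate for EG laminations relative to $\mathcal{F}$ as a separate lemma. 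There is also a smaller counting subtlety at the end (bounding the number of $\beta$-occurrences whose centers lie in the "bad" set by its measure requires some care about the number of components of that set), but the frequency claim is the essential missing ingredient.
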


 \begin{proof}
  Suppose that $\{\alpha_i\}$ approximates $\Lambda^-_\psi$. Since our hypothesis says that 
  $\Lambda^\pm_\psi$ is weakly attracted to $\Lambda^-_\phi$ under iterations of $\phi^{-1}$, we may choose 
  an attracting neighborhood $V^-_\phi$ of $\Lambda^-_\phi$ defined by a long generic leaf segment of $\Lambda^-_\phi$ such that 
  $\Lambda^\pm_\psi\notin V^-_\phi$. If such a leaf segment does not exist then it would imply that 
  $\Lambda^-_\phi\subset \Lambda^+_\psi$ or $\Lambda^-_\phi\subset \Lambda^-_\psi$ and in either case we would 
  violate that $\Lambda^-_\phi$ is weakly attracted to both $\Lambda^+_\psi$ and $\Lambda^-_\psi$. 
  Now notice that under this setup, $\Lambda^-_\psi\notin V^-_\phi$ and $ \Lambda^-_\psi$ is not carried by 
  $\mathcal{A}_{na}(\Lambda^\pm_\phi)$. Hence by applying the uniformity part of the weak attraction theorem, we 
  get an $M\geq 1$ such that $\phi^m_\#(\gamma^-_\psi)\in V^+_\phi$ for all $m\geq M$ for every generic leaf 
  $\gamma^-_\psi\in\Lambda^-_\psi$.  Since $V^+_\phi$ is an open set 
  we can find an $I\geq 1$ such that $\phi^m_\#(\alpha_i)\in V^+_\phi$ for all $m\geq M, i\geq I$. This 
  implies that $\{\alpha_i\}$ cannot approximate $\Lambda^-_\phi$. 
  
 \end{proof}
 
 For the next proposition we perform a partial electrocution by coning-off $\Gamma$ in the 
 short exact sequence $1\to\F\to\Gamma\to Q\to 1$, with respect to the 
 collection of subgroups $\{F^i\}$ and also perform an electrocution on $\F$ by coning-off the same collection of subgroups. 
 The resulting electric metric ${|\cdot|}_{el}$ on $\widehat{\F}$ is one one used in the statements below.

  We also choose lifts $\Phi, \Psi$ 
 of $\phi, \psi$ respectively such that there are $g_i\in\F$ and we have
 $\Phi(F^i)=\Psi(F^i)=g_i^{-1}F^i g_i$.
 
 The following theorem originates from Lee Mosher's work on mapping class groups \cite{Mos-97}.
 For the free groups case it was first shown in \cite{BFH-97} for fully irreducible nongeometric elements.
 
  \begin{proposition}[3-of-4 stretch]
 \label{34}
  Let $\phi, \psi$ be fully-irreducible relative to a $\phi-$invariant
   free factor system $\mathcal{F}=\{[F^1], [F^2],..., [F^k]\}$ such that 
  $\mathcal{F}\sqsubset\{[\F]\}$  is multi-edge extension.
  Suppose both are nongeometric above
  $\mathcal{F}$. If $\phi$ and $\psi$ are independent relative to $\mathcal{F}$ then we have the 
  following:
  \begin{enumerate}
   \item There exists some $M\geq 0$ such that 
   for any conjugacy class $\alpha$ not carried by $\mathcal{F}$,  at least three of the four numbers 
   $$ {||\phi^{n_i}_\#(\alpha)||}_{el}, {||\phi^{-n_i}_\#(\alpha)||}_{el}, 
  {||\psi^{n_i}_\#(\alpha)||}_{el}, {||\psi^{-n_i}_\#(\alpha)||}_{el}$$
  are greater an or equal to $3{||\alpha||}_{el}$, for all $n_i\geq M$.
  
  \item There exists some $N\geq 0$ such that for any word $w\in\F\setminus\cup F^i$, at least three of the four numbers 
   
   $$ {|\Phi^{n_i}_\#(w)|}_{el}, {|\Phi^{-n_i}_\#(w)|}_{el}, 
  {|\Psi^{n_i}_\#(w)|}_{el}, {|\Psi^{-n_i}_\#(w)|}_{el}$$
  are greater than $2{|w|}_{el}$, for all $n_i\geq N$.
   
  \end{enumerate}

 \end{proposition}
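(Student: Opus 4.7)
My plan is to prove part (1), the conjugacy class version, by a contradiction argument that ultimately invokes Lemma \ref{conv}, and then to deduce part (2), the word version, from part (1) by mirroring the case analysis used in passing from Lemma \ref{conjflare} to Proposition \ref{strictflare}. The starting point for part (1) is the observation that Lemma \ref{conjflare} applied separately to $\phi$ and to $\psi$ already gives, for all $n \geq M_0$, that at least one of $\{{||\phi^{n}_\#(\alpha)||}_{el}, {||\phi^{-n}_\#(\alpha)||}_{el}\}$ is $\geq 3{||\alpha||}_{el}$ and similarly at least one of $\{{||\psi^{n}_\#(\alpha)||}_{el}, {||\psi^{-n}_\#(\alpha)||}_{el}\}$ is $\geq 3{||\alpha||}_{el}$. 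This furnishes a ``$2$-of-$4$'' conclusion essentially for free; the real content of the proposition is upgrading $2$ to $3$ using the independence hypothesis.

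Assume for contradiction the $3$-of-$4$ conclusion fails. After passing to subsequences and, if necessary, replacing $\phi$ or $\psi$ by its inverse, we obtain $n_i \to \infty$ and conjugacy classes $\alpha_i$ not carried by $\mathcal{F}$ with ${||\phi^{-n_i}_\#(\alpha_i)||}_{el} \geq 3{||\alpha_i||}_{el}$ and ${||\psi^{-n_i}_\#(\alpha_i)||}_{el} \geq 3{||\alpha_i||}_{el}$, while ${||\phi^{n_i}_\#(\alpha_i)||}_{el} < 3{||\alpha_i||}_{el}$ and ${||\psi^{n_i}_\#(\alpha_i)||}_{el} < 3{||\alpha_i||}_{el}$. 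I will argue that under these hypotheses $\{\alpha_i\}$ must approximate both $\Lambda^-_\phi$ and $\Lambda^-_\psi$ in the sense of the definition preceding Lemma \ref{conv}, which directly contradicts that lemma. The mechanism is the following: the failure of $\phi^{n_i}_\#(\alpha_i)$ to $3$-expand, together with Lemma \ref{comparison} comparing ${|\cdot|}_{H_r}$ and ${||\cdot||}_{el}$ and Lemma \ref{flare} converting lower bounds on $H_r$-legality into exponential $H_r$-stretch, forces the $H_r$-legality of $\phi^{M_2}_\#(\alpha_i)$ to go to zero as $i \to \infty$. Lemma \ref{legality} then gives that the $H'_s$-legality of $\phi^{-M_2}_\#(\alpha_i)$ must take up almost all of the $H'_s$-length, and chasing this backward through the fixed homotopy equivalence $\phi^{M_2}$ (using bounded cancellation to control losses of interior leaf segments of $\Lambda^-_\phi$) produces the measure-theoretic approximation of $\Lambda^-_\phi$ by $\{\alpha_i\}$. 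The identical argument with the roles of $\phi$ and $\psi$ interchanged yields approximation of $\Lambda^-_\psi$, contradicting Lemma \ref{conv}.

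For part (2), the deduction from part (1) follows the template already established in the proof of Proposition \ref{strictflare}. Fix $L$ to be the maximum of ${|\Phi^i_\#(k_j)|}_{el}$ and ${|\Psi^i_\#(k_j)|}_{el}$ as $|i| \leq M$ and $k_j$ ranges over basis elements of a fixed basis of each component of the free factor system $\mathcal{K}$ built from $H_r$. For $w \in \F \setminus \cup F^i$ with ${|w|}_{el}$ above a threshold determined by $L$: if $w$ is cyclically reduced its conjugacy class is not carried by $\mathcal{F}$ and part (1) applies directly; otherwise premultiply $w$ by a suitable basis element $k$ to make $kw$ cyclically reduced and transfer the $3$-stretch of $kw$ to a $2$-stretch of $w$ via the triangle inequality and the bound $L$. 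For ${|w|}_{el}$ below the threshold, split on whether $w$ is conjugate into some $F^i$: the non-conjugate case is again part (1), while if $w = u g u^{-1}$ with $g \in F^i$ and $u$ not conjugate into any $F^j$ (available by malnormality of $\mathcal{A}_{na}(\Lambda^\pm_\phi)$), the electric length of $\Phi^n_\#(g)$ and $\Psi^n_\#(g)$ grows only polynomially while $u$ $3$-stretches in three of the four directions by part (1), yielding $2$-stretch of $w$ in those same three directions for some exponent $N_w$ depending only on $u$. Finitely many such $u$ occur, so the final $N$ is obtained as the maximum over all the relevant exponents.

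The main obstacle is the non-expansion $\Rightarrow$ approximation step of paragraph two. The definition of approximation is a measure-theoretic full-density statement, whereas the immediate output of the legality machinery is an $\epsilon$-density statement at a single bounded iteration. Converting the one-shot $\epsilon$-lower bound on the alternative direction into a uniform asymptotic density going to $1$, while simultaneously keeping track of where in $\alpha_i$ the generic leaf segments of $\Lambda^-_\phi$ actually sit after pulling back through the fixed $\phi^{M_2}$, is the technical heart of the argument and the place where most of the care has to be exercised.
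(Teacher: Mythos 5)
Your overall toolkit (Lemma \ref{conjflare} to get $2$-of-$4$ for free, then the independence hypothesis via Lemma \ref{conv} to upgrade to $3$-of-$4$, then the case split for part (2) following Proposition \ref{strictflare}) matches the paper's. However, for the key upgrade step in part (1) you have inverted the direction of implication relative to the paper, and the direction you chose is the one that does not go through. You set out to prove \emph{``non-$3$-expansion of $\phi^{n_i}_\#(\alpha_i)$ and $\psi^{n_i}_\#(\alpha_i)$ $\Longrightarrow$ $\{\alpha_i\}$ approximates both $\Lambda^-_\phi$ and $\Lambda^-_\psi$,''} and then contradict Lemma \ref{conv}. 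You correctly flag the problem yourself: Lemma \ref{legality} only gives the one-shot statement that at least one of the two legalities exceeds a fixed $\epsilon>0$, whereas \emph{approximation} in the sense of the definition before Lemma \ref{conv} is a full-density statement (the measure of well-covered points tends to $1$ for every $L$). There is no mechanism in the legality/flare lemmas to promote an $\epsilon$-bound on one iterate to a density-$1$ statement about the circuits $\alpha_i$ themselves, and your ``chase backward through $\phi^{M_2}$ with bounded cancellation'' sketch does not fill this gap. As written, the argument is incomplete at exactly the step you identify as the technical heart.

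The paper runs the logic the other way, which avoids needing this implication at all. It applies Lemma \ref{conv} \emph{first}: the sequence $\{\alpha_i\}$ cannot approximate both $\Lambda^-_\phi$ and $\Lambda^-_\psi$, so WLOG it does not approximate $\Lambda^-_\phi$. From non-approximation one passes to a subsequence lying outside a fixed attracting neighborhood $V^-_\phi$; since each $\alpha_i$ is also not carried by $\mathcal{A}_{na}(\Lambda^\pm_\phi)=\mathcal{F}$, the uniformity clause of the weak attraction theorem (Corollary \ref{WATgeo}) forces $\phi^m_\#(\alpha_i)\in V^+_\phi$ for all $m\geq M$, hence $\mathrm{LEG}_{H_r}(\phi^{n_i}_\#(\alpha_i))\geq\epsilon$, and Lemma \ref{flare} then makes the ratio $|\phi^{n_i}_\#(\alpha_i)|_{H_r}/|\alpha_i|_{H_r}$ tend to $\infty$, contradicting ($\ast$). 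This is a clean direct argument: the WAT dichotomy replaces your problematic ``low-legality $\Rightarrow$ high-density'' step. You should restructure part (1) to apply Lemma \ref{conv} at the start and drive the contradiction with WAT, rather than attempting to manufacture approximation. Your treatment of part (2) follows the paper's template for Proposition \ref{strictflare} and is fine.
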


 \begin{proof}
 Proof of (1): 
  Suppose there does not exist any such $M_0$. We argue to a contradiction by using the weak attraction theorem. 
  By our supposition we get a sequence of conjugacy classes 
  $\alpha_i$ such that at least two of the four numbers 
  ${||\phi^{n_i}_\#(\alpha_i)||}_{el}, {||\phi^{-n_i}_\#(\alpha_i)||}_{el}$, 
  ${||\psi^{n_i}_\#(\alpha_i)||}_{el}, {||\psi^{-n_i}_\#(\alpha_i)||}_{el}$ are less than 
  $3{||\alpha_i||}_{el}$ and $n_i>i$. Proposition \ref{conjflare} tells us that at least one of 
  $\{{||\phi^{n_i}_\#(\alpha_i)||}_{el}, {||\phi^{-n_i}_\#(\alpha_i)||}_{el}\}$ is $\geq 3{||\alpha_i||}_{el}$
   and at least one of 
   
   $\{{||\psi^{n_i}_\#(\alpha_i)||}_{el}, {||\psi^{-n_i}_\#(\alpha_i)||}_{el}\}$ is $\geq 3{||\alpha_i||}_{el}$
  for all sufficiently large $i$. 
  
  For sake of concreteness suppose that 
  ${||\phi^{n_i}_\#(\alpha_i)||}_{el} \leq 3{||\alpha_i||}_{el}$ and 
  ${||\psi^{n_i}_\#(\alpha_i)||}_{el} \leq 3{||\alpha_i||}_{el}$ for all $n_i$. \hfill ($\ast$)
  
  Using Lemma \ref{conv} we know that the sequence $\{\alpha_i\}$ cannot approximate both $\Lambda^-_\phi$ and and 
  $\Lambda^-_\psi$.  For concreteness suppose that $\{\alpha_i\}$ does not approximate $\Lambda^-_\phi$.
  Then we can choose some attracting neighborhood $V^-_\phi$ of $\Lambda^-_\phi$ which is defined by some long 
  generic leaf segment and after passing to a subsequence if necessary we may assume that $\alpha_i\notin V^-_\phi$ for all 
  $i$. Also recall that our hypothesis implies that $\alpha_i$ is not carried by $\mathcal{A}_{na}(\Lambda^\pm_\phi)=\mathcal{F}$ 
  for all $i$. Hence by using the uniformity part of the weak attraction theorem, there exists some $M$ 
  such that $\phi^m_\#(\alpha_i)\in V^+_\phi$ for all $m\geq M$. Choosing $i$ to be sufficiently large 
  we may assume that $n_i \geq M$ and so by using Lemma \ref{legality} we have ${LEG}_{H_r}(\phi^{n_i}_\#(\alpha_i))\geq \epsilon$ for some $\epsilon>0$.
  By using Lemma \ref{flare} we obtain that for any $A>0$ there exists some $M_1$ such that 
  $${|\phi^m_\#(\alpha_i)|}_{H_{r}}\geq A {|\alpha_i|}_{H_{r}} $$ for every $m>M_1$. This implies that 
  for all sufficiently large $i$, $|\phi^{n_i}_\#(\alpha_i)|_{H_r} \geq A |\alpha_i|_{H_r}$. Choosing a 
  sequence $A_i\to \infty$ and after passing to a subsequence of $\{n_i\}$ we may assume that 
  $|\phi^{n_i}_\#(\alpha_i)|_{H_r} \geq A_i |\alpha_i|_{H_r}$. But this implies that the ratio 
  $|\phi^{n_i}_\#(\alpha_i)|_{H_r}/|\alpha_i|_{H_r} \to \infty$ as $i\to\infty$. This contradicts ($\ast$).
  
  Proof of (2) is similar to proof of Proposition \ref{strictflare}.
  
 \end{proof}

 Now we are ready to state the main theorem of this section, which is a generalization of 
 \cite[Theorem 5.2]{BFH-97}. Their result is obtained by taking $\mathcal{F}$ to be trivial.
 
 \begin{theorem}\label{nongeoext}
  Suppose $\phi,\psi\in\out$  and 
  $\mathcal{F}=\{[F^1], [F^2],..., [F^k]\}$ be a $\phi$ and  $\psi-$invariant free factor system such that 
  $\mathcal{F}\sqsubset\{[\F]\}$  
  is a multi-edge extension and $\phi, \psi$ are fully irreducible relative to $\mathcal{F}$ 
  and both are nongeometric above $\mathcal{F}$ and pairwise independent relative to $\mathcal{F}$.
  If $Q=\langle \phi^m, \psi^n \rangle$ denotes 
  the free group in the conclusion of corollary \ref{rfi}, then
  the extension group $\Gamma$ in the short exact sequence 
  $$ 1 \to \F \to \Gamma \to Q \to 1 $$ is strongly relatively hyperbolic with respect to the 
  collection of subgroups 
  $\{F^i \rtimes \widehat{Q_i} \}$, where  $\widehat{Q_i}$  is a lift that preserves $F^i$
  
 \end{theorem}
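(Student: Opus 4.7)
The plan is to apply the Mj-Reeves strong combination theorem in the form of Lemma \ref{hallway} with $\mathcal{H} = Q$ and peripheral collection $\{F^i\}$. The first step is to set up the tree of strongly relatively hyperbolic spaces. By Lemma \ref{NAS}(6), $\mathcal{A}_{na}(\Lambda^\pm_\phi) = \mathcal{F}$ is malnormal, and our relative independence hypothesis gives $\mathcal{A}_{na}(\Lambda^\pm_\phi) = \mathcal{A}_{na}(\Lambda^\pm_\psi)$. Each $F^i$ is a finitely generated subgroup of $\F$, hence quasiconvex, so $\F$ is strongly hyperbolic relative to $\{F^i\}$. Because $\phi$ and $\psi$ each preserve every component of $\mathcal{F}$, so does $Q$, and we may choose lifts $\widehat{Q}_i < \mathrm{Aut}(\F)$ that genuinely fix each $F^i$ (giving the strictly type-preserving condition). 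The Cayley graph $T$ of the free group $Q$ is a tree, providing $\Gamma$ with the structure of a tree (over $T$) of strongly relatively hyperbolic spaces whose vertex and edge spaces are all cosets of $\widehat{\F}$; the partial electrocution $\widehat{\Gamma}$ is then the induced tree of coned-off spaces appearing in Lemma \ref{hallway}.

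The heart of the argument is the verification of the two flaring conditions, and Proposition \ref{34} is tailored for exactly this purpose. Consider a thin essential hallway of length $2m$ in $\widehat{\Gamma}$. Its projection to $T$ is a geodesic spelling a reduced word $g_1 g_2 \cdots g_{2m}$ in $\{\phi^{\pm m}, \psi^{\pm n}\}$. Fix the central vertical slice, whose endpoints (once the girth $C(\delta)$ is sufficiently large) differ by some $w \in \F \setminus \cup F^i$. Reading the hallway outward from the centre, the slice at step $i$ is represented by the image of $w$ under successive application of the automorphisms lifting $g_{i}, g_{i-1}, \ldots$. Proposition \ref{34}(2) says that at each such intermediate word, at most one of the four possible next-letters fails to double the electrocuted length. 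The combinatorial crux is that this unique \emph{bad} next-letter at the current word $w_i$ is forced (up to bounded distortion coming from the bounded cancellation constant) to be $g_i^{-1}$, since applying $g_i^{-1}$ to $w_i = g_i \cdot w_{i-1}$ simply recovers $w_{i-1}$, which has length comparable to $w_i$. Thus two consecutive bad steps would require $g_{i+1} = g_i^{-1}$, contradicting reducedness. Consequently at least $m$ of the $2m$ steps at least double the length, giving the hallway flare condition with $\lambda = 2$ for $m$ sufficiently large.

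For the cone-bounded hallways strictly flare condition, the endpoints of each vertical slice of such a hallway lie in the cone-locus, which means each slice represents a conjugacy class $\alpha$ not carried by $\mathcal{F}$ and traveling between cosets of $F^i$'s. Proposition \ref{34}(1), the three-of-four conjugacy stretch for $||\cdot||_{el}$, combined with the same reduced-word/no-consecutive-bad-step argument, delivers the strict flare. With both flare conditions verified and malnormality plus quasiconvexity of $\{F^i\}$ in hand, Lemma \ref{hallway} yields that $\Gamma$ is strongly hyperbolic relative to the collection $\{F^i \rtimes \widehat{Q}_i\}$, which is the desired conclusion.

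The principal obstacle is making precise the \emph{no two consecutive bad steps} mechanism with uniform constants. One must show that if $\Phi_{g}$ barely stretches (or even compresses) $w$ in the electrocuted metric, then $\Phi_{g^{-1}}$ barely stretches $\Phi_g(w)$, forcing the bad direction at consecutive steps to be mutually inverse. This requires a uniform bounded-cancellation estimate for the electrocuted metric of $\widehat{\F}$, which should be extracted from Lemma \ref{comparison} together with the standard bounded cancellation lemma for the improved relative train track maps representing $\phi$ and $\psi$. The bookkeeping is analogous to the induction already performed in Proposition \ref{strictflare} for the cyclic case, and careful choice of $m$ in terms of $\delta$, $C(\delta)$, and the cancellation constants yields the required uniform flare factor.
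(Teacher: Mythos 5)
Your overall strategy is exactly the paper's: realize $\Gamma$ as a tree of strongly relatively hyperbolic spaces over the Cayley tree $T$ of $Q$ with vertex/edge spaces the cosets of $\widehat{\F}$, verify the hallways flare and cone-bounded strictly flare conditions via Proposition~\ref{34}, and invoke Lemma~\ref{hallway}. The paper's proof of Theorem~\ref{nongeoext} is only a few lines and defers the combinatorics entirely to Proposition~\ref{34} and the discussion preceding Lemma~\ref{hallway}; you attempt to spell out that deduction, which is useful, but the combinatorial mechanism as you have written it is wrong in two places.

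First, the claim that the unique bad next-letter at $w_i$ is forced to be $g_i^{-1}$, and hence that two consecutive bad steps would force $g_{i+1}=g_i^{-1}$, is not correct. Knowing the step $w_{i-1}\mapsto w_i$ is bad only gives ${|w_i|}_{el} < 2{|w_{i-1}|}_{el}$; it gives no upper bound on ${|w_{i-1}|}_{el}$ in terms of ${|w_i|}_{el}$ of the form ${|w_{i-1}|}_{el} < 2{|w_i|}_{el}$. The Lipschitz constants of the train track representatives are typically much larger than $2$, so the reverse letter $g_i^{-1}$ may perfectly well double $w_i$ (and so be a \emph{good} direction at $w_i$). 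Therefore two consecutive bad steps are \emph{not} a priori excluded by this reasoning. Second, even granting ``no two consecutive bad steps,'' the conclusion that ``at least $m$ of the $2m$ steps double, hence flaring with $\lambda=2$'' is a non sequitur: the hallway flare condition concerns the length at the boundary slices, which requires a \emph{contiguous} run of doubling steps from the center; a density-$1/2$ set of good steps interleaved with bad steps can be swamped by the Lipschitz constant of the bad steps.

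The correct deduction (as in \cite[Theorem 5.2]{BFH-97} and the related cone-bounded discussion in \cite{MjR-08}) runs the implication in the opposite direction: if a step \emph{is} good, i.e. ${|w_{i+1}|}_{el}\geq 2{|w_i|}_{el}$, then reversing it is necessarily bad at $w_{i+1}$, since ${|w_i|}_{el}\leq {|w_{i+1}|}_{el}/2 < 2{|w_{i+1}|}_{el}$; by the 3-of-4 lemma this reversal is \emph{the} unique bad direction at $w_{i+1}$, and reducedness then forces the next outward step to be good. Thus ``once good, always good'' along a geodesic in $T$. At the central slice $w_0$, the 3-of-4 lemma guarantees that at most one of the two outward directions of the hallway is bad, so at least one of the two half-hallways is good at its first step and hence at every step, giving the exponential growth needed for both flare conditions. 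With this repair your proof coincides with the paper's.
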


 \begin{proof}
  The conclusion that $\widehat{Q}$ is free group of rank 2 follows from the fact that $Q$ is a free group of rank 2.
  The cone-bounded strictly flare condition is obtained from Proposition \ref{34}. Apply Lemma \ref{hallway} to get get the 
  conclusion.
  
 \end{proof}

 \begin{corollary}\cite[Theorem 5.2]{BFH-97}
  Suppose $\phi,\psi$ are irreducible and hyperbolic outer automorphisms which do not have a common power. Then there 
  exists some $M>0$ such that for every $m, n \geq M$ the group $Q:=\langle \phi^m, \psi^n \rangle$ is a free group of 
  rank $2$ and the extension group $\F\rtimes \widetilde{Q}$ is word hyperbolic.
 \end{corollary}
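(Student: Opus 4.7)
The plan is to derive this corollary by applying Theorem \ref{nongeoext} with $\mathcal{F}=\emptyset$, and then observe that strong relative hyperbolicity with respect to the empty collection of peripherals is just word hyperbolicity. So the work reduces to verifying that the hypotheses of Theorem \ref{nongeoext} hold for $\phi,\psi$ with $\mathcal{F}=\emptyset$, and then invoking it.

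First I would check the easy conditions. Since $\mathrm{rank}(\F)\geq 3$, any marked graph has at least three edges, so the empty subgraph has co-edge number $\geq 3\geq 2$, giving the multi-edge extension hypothesis $\emptyset\sqsubset\{[\F]\}$. Both $\phi,\psi$ trivially preserve $\emptyset$ and its components. Being fully irreducible (in the usual sense) is exactly being fully irreducible relative to $\emptyset$, so that condition is immediate. Being hyperbolic (= atoroidal) in the sense of Bestvina--Feighn / Brinkmann means there are no periodic conjugacy classes, so by Lemma \ref{firelf} the invariant dual lamination pairs $\Lambda^\pm_\phi,\Lambda^\pm_\psi$ are nongeometric and $\mathcal{A}_{na}(\Lambda^\pm_\phi)=\mathcal{A}_{na}(\Lambda^\pm_\psi)=\emptyset$, which gives ``nongeometric above $\mathcal{F}$'' and makes the mutual malnormality relative to $\emptyset$ in condition (5) of Definition \ref{relind} vacuous. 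Since each $\phi,\psi$ is fully irreducible, each lamination individually fills, so $\{\Lambda^\pm_\phi\}\cup\{\Lambda^\pm_\psi\}$ fills relative to $\emptyset$, and condition (6) is immediate because both pairs are nongeometric.

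The substantive step is verifying the mutual weak attraction conditions (3) and (4) of Definition \ref{relind}, namely that $\Lambda^\pm_\phi$ is weakly attracted to $\Lambda^\epsilon_\psi$ under iterates of $\psi^\epsilon$, and symmetrically with $\phi,\psi$ swapped. The main obstacle here is that $\phi,\psi$ only share no common power, which \emph{a priori} is weaker than having all four lamination inclusions fail. However, since both are fully irreducible and hyperbolic, by Lemma \ref{lam_incl} the inclusion $\Lambda^+_\phi\subset\Lambda^+_\psi$ would force $\Lambda^-_\phi\subset\Lambda^-_\psi$; and filling together with the BFH classification of stabilizers of attracting laminations of fully irreducible automorphisms shows that any such inclusion among attracting/repelling laminations of fully irreducibles forces $\phi,\psi$ to share a common power, contradicting our hypothesis. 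Hence none of the four possible lamination inclusions among $\Lambda^\pm_\phi,\Lambda^\pm_\psi$ hold, and none of these laminations is carried by $\emptyset$. Applying Lemma \ref{WAT} (the weak attraction theorem) to, say, a generic leaf $\lambda^+_\phi$ of $\Lambda^+_\phi$ under iterates of $\psi^\epsilon$: the leaf is not carried by $\mathcal{A}_{na}(\Lambda^\pm_\psi)=\emptyset$, so it is either attracted to $\Lambda^+_\psi$ under $\psi$ or to $\Lambda^-_\psi$ under $\psi^{-1}$; the failure of the other inclusion rules out the alternative and yields the required weak attraction. Running this argument over the four sign combinations gives conditions (3) and (4), thereby establishing pairwise independence relative to $\emptyset$.

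With all hypotheses verified, Theorem \ref{nongeoext} produces an integer $M$ such that for all $m,n\geq M$, the group $Q=\langle\phi^m,\psi^n\rangle$ is free of rank two and the extension $1\to\F\to\Gamma\to Q\to 1$ is strongly hyperbolic relative to the collection $\{F^i\rtimes\widehat{Q_i}\}$ over the components of $\mathcal{F}$. Since $\mathcal{F}=\emptyset$, this collection is empty; and a finitely generated group that is strongly hyperbolic relative to the empty collection of subgroups is word hyperbolic (this is a standard fact, e.g.\ by Farb's definition of relative hyperbolicity with the bounded coset penetration property reducing to the usual Gromov hyperbolicity when there are no peripherals). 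Therefore $\Gamma=\F\rtimes\widetilde{Q}$ is word hyperbolic, completing the proof.
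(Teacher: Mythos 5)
Your proposal is correct and follows essentially the same route as the paper, whose entire proof is to apply Theorem \ref{nongeoext} with $\mathcal{F}=\emptyset$; your additional verification that ``no common power'' yields pairwise independence relative to $\emptyset$ (via the uniqueness of the lamination pair of a fully irreducible, Lemma \ref{lam_incl}, the weak attraction theorem, and the virtually cyclic stabilizer result of Bestvina--Feighn--Handel) simply makes explicit what the paper leaves implicit. The closing observation that strong relative hyperbolicity with an empty peripheral collection is ordinary word hyperbolicity is also the intended reading.
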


 \begin{proof}
 Apply Theorem \ref{nongeoext} with $\mathcal{F}=\emptyset$.
 \end{proof}

 \subsection{Relative hyperbolic extensions: geometric case}
 In this subsection we work with  $\phi\in\out$ and a $\phi-$invariant free factor system 
 $\mathcal{F}=\{[F^1], [F^2],...., [F^k]\}\sqsubset \{[\F]\}$ which is a multi-edge extension 
 such that $\phi$ is fully irreducible relative to $\mathcal{F}$ but geometric above $\mathcal{F}$.
 This is equivalent to saying that there exists a dual  lamination pair $\Lambda^\pm_\phi$ and a
 conjugacy class $[\sigma]$ such that the following are true:
 \begin{enumerate}
  \item $\phi([\sigma])=[\sigma]$ 
  
  \item $\mathcal{F}_{supp}(\mathcal{F}, \Lambda^\pm_\phi)=\{[\F]\}$ i.e. $\Lambda^\pm_\phi$ fill relative to 
  $\mathcal{F}$.
  \item $\mathcal{A}_{na}(\Lambda^\pm_\phi)= \mathcal{F}\cup \{[\langle \sigma \rangle]\}$.
  \item $\sigma\in\F$ is primitive.
 \end{enumerate}

 The condition on $\sigma$ to be \textbf{primitive}, means $\sigma$
 is not a non-trivial power of any element of $\F$. Also recall that the nonattracting subgroup 
 system is a malnormal system. Now, in this case we slightly modify the definition of legality that we used in the geometric case by insisting that 
 we do not count copies of $\sigma$ when counting the length. 
 Choose an improved relative train track map $f:G\to G$ representing $\phi$ and let $G_{r-1}$ be the filtration element that realizes 
 $\mathcal{F}$. Then $H_r$ is the EG strata associated with $\Lambda^\pm_\phi$ and the circuit realizing $[\sigma]$ is the unique indivisible Nielsen 
 path of height $r$.
 
 For any circuit $\alpha$ in $G$, define ${|\alpha|}_{H_r^\sigma}$ to be the $H_r$-edge length of $\alpha$ relative to the realization of 
 $[\sigma]$ in $G$, i.e. the length obtained by counting the edges of $H_r$ but not counting copies of the closed indivisible Nielsen path inside  $\alpha$ 
 that represents $[\sigma]$.
 
 \begin{definition}
  For any circuit $\alpha$ in $G$, the $H_{r}$-legality of $\alpha$ is defined as the ratio 
  $${LEG}_{H_r}(\alpha):= \frac{\text{sum of lengths of generic leaf segments of $\Lambda^+_\phi$ in $\alpha\cap H_r$ of length} \geq C}{{|\alpha|}_{H_r^\sigma}}$$
 if ${|\alpha|}_{H_r^\sigma}\neq 0$. Otherwise, if ${|\alpha|}_{H_r^\sigma}=0$, define ${LEG}_{H_r}(\alpha)=0$.
 
 \end{definition}
 
 In this setup consider the collection of subgroups $\{F^1, F^2,..., F^k, \langle \sigma\rangle\}$.
 Since the nonattracting subgroup is a malnormal 
 system, this collection of subgroups is also malnormal collection of subgroups. Also since $\sigma$ 
 is a primitive element then each subgroup of the collection is quasiconvex. 
 
 Consider the following short exact sequence of groups 
 $$1 \to \F \to \Gamma \to \langle \phi \rangle \to 1$$
 
 We perform a partial electrocution of $\Gamma$ and $\F$ by coning-off the collection of subgroups 
 $\{F^i\}\cup\langle\sigma\rangle$. Then $\F$ is (strongly) relatively hyperbolic and the Cayley graph of $\langle\phi\rangle$
 being a tree gives us a tree of strongly relatively hyperbolic spaces with vertex spaces being identified with cosets 
 of $\F$. Thus we may look at the Cayley graph of $\Gamma$ as a tree of relatively hyperbolic spaces.
 Recall the setup explained before the statement of the strong combination theorem \ref{hallway}.
 
 In this situation the following simple modifications to the results proved for the nongeometric case can be made:
 
 \begin{enumerate}
  \item Conclusion of Lemma \ref{legality} and Lemma \ref{flare} are true if we consider  circuits
  $\alpha$ which are not carried by $\mathcal{F}\cup \{[\langle \sigma \rangle]\}$.
  \item Conclusion of Lemma \ref{comparison} is true if we replace  
  ${|\alpha|}_{H_r}$ with ${|\alpha|}_{H_r^\sigma}$ in the statement. In particular the inequality is true for any conjugacy class not carried by 
  $\mathcal{F}\cup \{[\langle \sigma \rangle]\}$.
  The relevant modification in the proof is made by considering edges, 
  paths and circuits not carried by the path system $\langle \mathcal{Z}, \hat{\sigma}\rangle$ instead of $\mathcal{Z}$. 
  \item Lemma \ref{conjflare} is true for all conjugacy classes not carried by $\mathcal{F}\cup \{[\langle \sigma \rangle]\}$ 
  in the statement.
  \item Proposition \ref{strictflare}  is true for all words in $\F\setminus \{\cup F^i\cup \sigma\}$.
 \end{enumerate}

  Thus arguing exactly as we did for the nongeometric case, we have the following theorem:
 
 \begin{theorem}\label{geom}
  Consider  $\phi\in\out$  and a free factor system 
 $\mathcal{F}=\{[F^1], [F^2],...., [F^k]\}$ 
 such that $\phi$ is fully irreducible relative to $\mathcal{F}$ and geometric above $\mathcal{F}$.
 Then the extension group $\Gamma$ in the short exact sequence 
 $$1 \to \F \to \Gamma \to \langle \phi \rangle \to 1$$ is strongly hyperbolic 
 relative to the collection of subgroups $\{F^i\rtimes \Phi_i\}$ and $\langle\sigma\rangle\rtimes \Phi_\sigma$
  where $\Phi_i$ is a lift that  preserves $F^i$ and $\Phi_\sigma$ is a lift that fixes $\sigma$.
  
\end{theorem}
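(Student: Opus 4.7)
The plan is to parallel the proof of Theorem \ref{relhyp2} with the enlarged peripheral collection $\{F^1,\ldots,F^k,\langle\sigma\rangle\}$, since the paper's enumerated list of four modifications already pinpoints exactly where the cyclic factor $\langle\sigma\rangle$ enters the argument. First I would verify the setup: by Lemma \ref{NAS}(6) the system $\mathcal{A}_{na}(\Lambda^\pm_\phi)=\mathcal{F}\cup\{[\langle\sigma\rangle]\}$ is malnormal; each $F^i$ is quasiconvex as a free factor of $\F$, and $\langle\sigma\rangle$ is quasiconvex because $\sigma$ is primitive so the cyclic subgroup is undistorted. Hence $\F$ is strongly hyperbolic relative to this malnormal, quasiconvex collection, and electrocuting its cosets produces the hyperbolic graph $\widehat{\F}$ with electric metric $|\cdot|_{el}$. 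The Cayley graph of $\langle\phi\rangle$ is a line, so $\Gamma$ becomes a tree of strongly relatively hyperbolic spaces whose partial electrocution $\widehat{\Gamma}$ is the induced tree of coned-off spaces appearing in Lemma \ref{hallway}.

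Next I would verify the four modifications listed in the paper. The crucial structural observation is that the groupoid $\langle\mathcal{Z},\widehat{\rho}_r\rangle$ now plays the role that $\mathcal{Z}$ alone played in the nongeometric case: a line or circuit is carried by $\mathcal{A}_{na}(\Lambda^\pm_\phi)$ precisely when, after tightening, it decomposes into edges of $\mathcal{Z}$ and copies of the closed indivisible Nielsen path $\widehat{\rho}_r$ representing $[\sigma]$, which is equivalent to $|\alpha|_{H_r^\sigma}=0$. With the $\sigma$-adjusted length in hand, the weak attraction theorem still produces a uniform lower bound on $\mathrm{LEG}_{H_r}(\phi^{\pm M}_\#(\alpha))$ for any $\alpha$ not carried by $\mathcal{F}\cup\{[\langle\sigma\rangle]\}$, exactly as in Lemma \ref{legality}; and the critical-constant exponential stretch of Lemma \ref{flare} is unaffected because indivisible Nielsen paths do not appear as subpaths of generic leaves of $\Lambda^+_\phi$ and so do not contribute to the numerator of $\mathrm{LEG}_{H_r}$.

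The comparison Lemma \ref{comparison} requires the most care. I would rerun its proof after replacing $\mathcal{Z}$ by $\langle\mathcal{Z},\widehat{\rho}_r\rangle$: a geodesic in the universal cover of $G$ equipped with the electric metric now collapses lifts of both cosets of $F^i$ and cosets of $\langle\sigma\rangle$ to cone-points, and these are precisely the paths of $H_r^\sigma$-length zero. The bounded-cancellation argument producing the upper and lower constants $L_1,L_2$ then carries over verbatim, so one obtains a uniform $K>0$ with $K\geq |\alpha|_{H_r^\sigma}/\|\alpha\|_{el}\geq 1/K$ for every circuit whose conjugacy class is not carried by $\mathcal{F}\cup\{[\langle\sigma\rangle]\}$. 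Given this, conjugacy flaring (Lemma \ref{conjflare}) and then strict flaring (Proposition \ref{strictflare}) follow with identical proofs, now stated for words in $\F\setminus(\bigcup F^i\cup\langle\sigma\rangle)$, simply by substituting the enlarged electrocuted collection throughout.

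Finally, strict flaring on $\widehat{\Gamma}$ gives both the hallways flare condition and the cone-bounded hallways strictly flare condition, since each maximal cone-subtree now corresponds to a coset of either $F^i\rtimes_{\Phi_i}\mathbb{Z}$ or $\langle\sigma\rangle\rtimes_{\Phi_\sigma}\mathbb{Z}$, and the strict flaring bound transfers to cone-bounded hallways as in the discussion following Definition \ref{mappingtorus}. Applying Lemma \ref{hallway} with $\mathcal{H}=\langle\phi\rangle$ and $\{H_\alpha\}=\{F^i\}\cup\{\langle\sigma\rangle\}$ then yields the conclusion. The main obstacle is the simultaneous bookkeeping in the comparison lemma: one has to check that the Nielsen path $\widehat{\rho}_r$ is systematically invisible both to $|\cdot|_{H_r^\sigma}$ (because it contributes no generic-leaf segments and is fixed by $\phi$) and to the electric metric (because $\sigma$ is coned off), so that the quasi-isometric comparison remains uniform. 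Once these two vanishings are correctly synchronized, every subsequent estimate in Section \ref{section4} transports to the geometric case without further modification.
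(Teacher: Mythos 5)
Your proposal follows essentially the same strategy as the paper: enlarge the peripheral collection to $\{F^1,\ldots,F^k,\langle\sigma\rangle\}$, replace $|\cdot|_{H_r}$ by the $\sigma$-adjusted length $|\cdot|_{H_r^\sigma}$ and $\mathcal{Z}$ by the groupoid $\langle\mathcal{Z},\widehat{\rho}_r\rangle$ in Lemma \ref{comparison}, and then transport the flaring results of the nongeometric section to invoke Lemma \ref{hallway}. You have in fact been slightly more explicit than the paper (which states the four modifications and then says ``arguing exactly as we did for the nongeometric case'') about why $\langle\sigma\rangle$ is quasiconvex and about the synchronization between the vanishing of $|\cdot|_{H_r^\sigma}$ on Nielsen-path pieces and the collapsing of $\sigma$-cosets in the electric metric, but the argument is the same.
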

If we take $\mathcal{F}$ to be empty, then we get the following corollary:

\begin{corollary}
 
 For every fully irreducible and geometric $\phi\in\out$ the extension groups 
 $\Gamma$ in the short exact sequence $$1 \to \F \to \Gamma \to \langle\phi\rangle \to 1 $$
 is strongly hyperbolic relative to the subgroup $\langle\sigma\rangle\rtimes_\Phi \mathbb{Z}$ where 
 $\Phi$ is a lift of $\phi$ that fixes $\sigma$.
\end{corollary}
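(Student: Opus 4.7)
The plan is to apply Theorem \ref{geom} with the free factor system $\mathcal{F}=\emptyset$; the paragraph immediately preceding the statement flags this specialisation as the intended reduction. The task therefore reduces to checking that the hypotheses of Theorem \ref{geom} are genuinely met in this degenerate case, and then reading off what the conclusion says.

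First I would verify that being \emph{fully irreducible relative to $\emptyset$} and \emph{geometric above $\emptyset$} recover the classical notions. The relative full irreducibility definition asks that no proper $\phi$-periodic free factor system properly contains $\emptyset$, which is precisely the usual fully irreducible condition. For the ``geometric above $\emptyset$'' condition I would invoke Lemma \ref{firelf} with $\mathcal{F}=\emptyset$: since $\F$ has rank at least $3$ and the top EG stratum of a CT representative of a fully irreducible, geometric $\phi$ realises all of $G$ above the empty filtration element, the coedge-$\geq 2$ hypothesis holds. Lemma \ref{firelf} then supplies a $\phi$-invariant dual lamination pair $\Lambda^\pm_\phi$ with $\mathcal{F}_{supp}(\emptyset,\Lambda^\pm_\phi)=\{[\F]\}$ (immediate from the filling property of a classical fully irreducible automorphism) and, in the geometric case, produces a root-free $\sigma\in\F$ satisfying $\mathcal{A}_{na}(\Lambda^\pm_\phi)=\{[\langle\sigma\rangle]\}$, which is exactly the setup under which Theorem \ref{geom} was proved.

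With these hypotheses in place, Theorem \ref{geom} gives that $\Gamma$ is strongly hyperbolic relative to the collection $\{F^i\rtimes \Phi_i\}\cup\{\langle\sigma\rangle\rtimes\Phi_\sigma\}$. The first part of this collection is vacuous because $\mathcal{F}$ has no components, so the conclusion collapses to strong relative hyperbolicity with respect to the single peripheral subgroup $\langle\sigma\rangle\rtimes_\Phi\mathbb{Z}$, where $\Phi$ is the chosen lift of $\phi$ fixing $\sigma$. This is precisely the statement of the corollary.

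I do not expect any real obstacle: the only subtlety is confirming that the geometric hypothesis on a classically fully irreducible $\phi$ produces the root-free generator $\sigma$ populating the peripheral subgroup, and this is already encoded in part (3) of Lemma \ref{firelf}. Hence the corollary follows by direct specialisation of Theorem \ref{geom}, without needing to revisit the flaring or combination-theorem machinery.
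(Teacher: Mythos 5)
Your proposal takes exactly the same route as the paper: specialise Theorem \ref{geom} to $\mathcal{F}=\emptyset$ and observe that the peripheral collection collapses to the single subgroup $\langle\sigma\rangle\rtimes_\Phi\mathbb{Z}$. The paper states this specialisation in one line without verification; your write-up supplies the (correct) checks that ``fully irreducible rel $\emptyset$'' and ``geometric above $\emptyset$'' recover the classical notions and that the co-edge hypothesis is satisfied, but the argument is the same.
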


The more general case when we have two $\phi, \psi\in\out$  such that both are 
geometric above $\mathcal{F}$ has to be handled with a little more care. If it so happens that 
both these automorphisms fix the same conjugacy class $\sigma$, where $\sigma$ is as described in the 
beginning of this section, we get the conclusion of Mosher's 3-of-4 lemma 
as in Proposition \ref{34} for every conjugacy class not carried by $\mathcal{F}\cup \{[\langle\sigma\rangle]\}$ (for part 1) and 
using it one shows the flaring property of part (2) for every word not in $\F\setminus \{\cup F^i\cup \sigma\}$.
Thus an argument similar to the nongeometric case \ref{nongeoext} gives us the following theorem:

\begin{theorem}\label{geoext}
 Suppose $\phi,\psi\in\out$ are rotationless and 
  $\mathcal{F}=\{[F^1], [F^2],..., [F^k]\}$ be a $\phi, \psi-$invariant free factor system such that 
  $\mathcal{F}\sqsubset\{[\F]\}$  
  is a multi-edge extension and $\phi, \psi$ are fully irreducible relative to $\mathcal{F}$ 
  and both are geometric above $\mathcal{F}$ and pairwise independent relative to $\mathcal{F}$.
  Also assume that they fix the same conjugacy class $\sigma$ (described above).
 Then the extension group $\Gamma$ in the short exact sequence 
  $$ 1 \to \F \to \Gamma \to Q \to 1 $$ is strongly  hyperbolic relative to the 
  collection of subgroups 
  $\{F^i \rtimes \widehat{Q_i} \}$ and $\langle\sigma\rangle\rtimes \widehat{Q_\sigma}$, where $Q=\langle \phi^m, \psi^n \rangle$ is the free group in the conclusion of 
  Corollary \ref{rfi} and $\widehat{Q_i}$  is a lift that preserves $F^i$ and 
  $\widehat{Q_\sigma}$ is a lift that fixes $\sigma$.
  
\end{theorem}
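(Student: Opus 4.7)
The plan is to follow the strategy used in the proof of Theorem \ref{nongeoext}, replacing the nongeometric collection $\{F^i\}$ with the augmented collection $\{F^i\}\cup\{\langle\sigma\rangle\}$, which by hypothesis constitutes the common nonattracting subgroup system $\mathcal{A}_{na}(\Lambda^\pm_\phi)=\mathcal{A}_{na}(\Lambda^\pm_\psi)=\mathcal{F}\cup\{[\langle\sigma\rangle]\}$. Since this collection is malnormal by Lemma \ref{NAS} and each subgroup is quasiconvex (the $F^i$'s as free factors, and $\langle\sigma\rangle$ because $\sigma$ is primitive), the group $\F$ is strongly hyperbolic relative to this collection. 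Combined with the tree structure coming from the Cayley graph of the free group $Q$, this realizes $\Gamma$ as a tree of strongly relatively hyperbolic spaces, exactly in the format demanded by Lemma \ref{hallway}.

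The next step is to verify the hallways flare condition and the cone-bounded hallways strictly flare condition for the induced tree of coned-off spaces $\widehat{\Gamma}$. First, I would upgrade the individual flaring statements from Section 4.1 using the modified length function ${|\cdot|}_{H^\sigma_r}$ (which disregards copies of the closed indivisible Nielsen path representing $\sigma$) and the corresponding modified legality. With these modifications and the four-item list preceding Theorem \ref{geom}, the weak attraction theorem (Lemma \ref{WAT}) gives the geometric analogue of Lemma \ref{legality} for every conjugacy class not carried by $\mathcal{F}\cup\{[\langle\sigma\rangle]\}$; the comparison Lemma \ref{comparison} goes through with ${|\alpha|}_{H^\sigma_r}$ in place of ${|\alpha|}_{H_r}$ once the groupoid $\langle\mathcal{Z},\widehat{\rho}_s\rangle$ is used in place of $\mathcal{Z}$; and thus conjugacy flaring (Lemma \ref{conjflare}) and strictly flaring (Proposition \ref{strictflare}) extend to every conjugacy class not carried by $\mathcal{F}\cup\{[\langle\sigma\rangle]\}$ and every word in $\F\setminus(\cup F^i\cup \langle\sigma\rangle)$ respectively.

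Having these single-generator flaring results in the geometric context, I would then prove the analogue of Proposition \ref{34} by repeating its argument: the key input, Lemma \ref{conv}, depends only on the relative-independence hypothesis and transfers unchanged; the other input, the uniform weak attraction provided by Lemma \ref{WAT}, applies because conjugacy classes not carried by $\mathcal{F}\cup\{[\langle\sigma\rangle]\}$ are not carried by either nonattracting subgroup system. This gives the 3-of-4 stretch statement for conjugacy classes not carried by $\mathcal{F}\cup\{[\langle\sigma\rangle]\}$, and for words in $\F\setminus(\cup F^i\cup\langle\sigma\rangle)$. Translating this into the tree-of-spaces language as in the discussion after Definition \ref{mappingtorus}, the 3-of-4 stretch furnishes both the hallways flare condition for $\widehat{\Gamma}$ and the cone-bounded hallways strictly flare condition for the cone-loci, whose connected components correspond to cosets of $F^i\rtimes\widehat{Q_i}$ and $\langle\sigma\rangle\rtimes\widehat{Q_\sigma}$ (the preservation of $F^i$ and $\sigma$ by appropriate lifts being available from the rotationless hypothesis and the common-fixed-conjugacy-class assumption). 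Applying Lemma \ref{hallway} then yields the claimed strong relative hyperbolicity.

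The main obstacle I expect is handling elements whose reduced form involves $\sigma$-subwords but which are not themselves conjugate into $\langle\sigma\rangle$: under iteration these $\sigma$-segments only grow polynomially in the electrocuted metric, so one must argue, as in Case 2 of the proof of Proposition \ref{strictflare}, that the exponentially growing factors outside the $\langle\sigma\rangle$-cosets dominate. Here the malnormality of $\mathcal{F}\cup\{[\langle\sigma\rangle]\}$ is essential, since it allows a unique conjugation decomposition $w=ugu^{-1}$ with $u$ not conjugate into any component of the augmented nonattracting system, and then exponential flaring of $u$ (furnished by the extended Proposition \ref{strictflare}) overwhelms the polynomial contribution of $g$.
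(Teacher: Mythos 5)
Your proposal is correct and follows essentially the same route as the paper: replace $\{F^i\}$ by the augmented collection $\{F^i\}\cup\{\langle\sigma\rangle\}=\mathcal{A}_{na}(\Lambda^\pm_\phi)=\mathcal{A}_{na}(\Lambda^\pm_\psi)$, use malnormality of the nonattracting subgroup system plus quasiconvexity to get strong relative hyperbolicity of $\widehat{\F}$, port the single-generator flaring results from Section 4.1 via the $\sigma$-discounted length ${|\cdot|}_{H_r^\sigma}$ and the path system $\langle\mathcal{Z},\widehat{\rho}_s\rangle$, derive the geometric 3-of-4 stretch, and invoke the Mj--Reeves combination theorem (Lemma \ref{hallway}). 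Your discussion is if anything more explicit than the paper's brief proof, and your closing observation about $\sigma$-segments growing only polynomially is exactly the Case~2 issue the paper leans on.
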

\begin{proof}
 
 Recall that we are working with an outer automorphisms $\phi,\psi$ which are fully irreducible 
  relative to a free factor system $\mathcal{F}$, where $\Lambda^\pm_\phi, \Lambda^\pm_\psi$ 
  are geometric above $\mathcal{F}$ and $\mathcal{A}_{na}(\Lambda^\pm_\psi)=\mathcal{A}_{na}(\Lambda^\pm_\phi)=\mathcal{F}\cup \{[\langle\sigma\rangle]\}$. 
  We have chosen some representative $F^i$ for each component $[F^i]$ of $\mathcal{F}$ and a representative $\sigma$ of 
  $[\sigma]$. 
  Then we performed a partial electrocution of the extension group $\Gamma$ $$1\to\F\to\Gamma\to Q\to 1 $$
  with respect the collection of 
  subgroups $\{F^i\}\cup\langle\sigma\rangle$ and denoted it by $(\widehat{\Gamma}, {|\cdot|}_{el})$. We also performed an electrocution of 
  $\F$ with respect to the collection $\{F^i\}\cup\langle\sigma\rangle$, denoted by $\widehat{\F}$, and since $\mathcal{A}_{na}(\Lambda^\pm_\phi)$
  is a malnormal subgroup system, $\F$ is (strongly) relatively hyperbolic with respect to the collection $\{F^i\}\cup\langle\sigma\rangle$. 
  The Cayley graph of the quotient group $Q$ 
  being a tree, gives us a tree of (strongly) relatively hyperbolic spaces with vertex spaces being identified 
  with cosets of $\F$. Thus we may regard the Cayley graph of $\Gamma$ as a tree of (strongly) relatively hyperbolic 
  spaces and then $\widehat{\Gamma}$ is the induced tree of coned-off spaces in the statement of the Mj-Reeves strong combination theorem \ref{hallway}. 
  Proposition \ref{34} proves that the hallway flare condition and the cone-bounded hallways strictly flare condition 
  are satisfied for this induced tree of coned-off spaces.  Lemma \ref{hallway} then gives us the desired conclusion.
 
\end{proof}

As a corollary of this when we take $\mathcal{F}=\emptyset$, we recover the case for 
surface group with punctures, which was proved in \cite[Theorem 4.9]{MjR-08}. 

Finally as a concluding remark we would like to point out that for the groups constructed 
in \cite[Corollary 6.1, item (1)]{self1} we can use of Theorem \ref{nongeoext} with $\mathcal{F}=\emptyset$ to conclude that 
the extension given by that free group is a hyperbolic extension. But no conclusion 
can be drawn about the other two types of groups constructed in that corollary, namely 
in item (3) and (4) of Corollary 6.1 using the results we have developed here. 
In fact the Mj-Reeves strong combination theorem cannot be applied in that case to deduce 
relative hyperbolicity. Hence we can ask the question whether the extension defined by 
those groups are strongly  relatively 
hyperbolic relative to any \textit{finite} collection of subgroups ?

\noindent\rule[0.5ex]{\linewidth}{1pt}
Address: Department of Mathematical Sciences, IISER Mohali, Punjab, India.\\
Contact: \href{mailto:pritam@scarletmail.rutgers.edu}{pritam@scarletmail.rutgers.edu}
%
\bibliographystyle{plainnat}
\def\bibfont{\footnotesize}
\bibliography{biblo}

\end{document}